\theoremstyle{plain}
\newtheorem{theorem}{Theorem}[section]
\newtheorem{corollary}[theorem]{Corollary}
\newtheorem{lemma}[theorem]{Lemma}
\newtheorem{proposition}[theorem]{Proposition}
\theoremstyle{remark}
\newtheorem*{remark}{Remark}
\newtheorem*{remarks}{Remarks}
\newcommand\numberthis{\stepcounter{equation}\tag{\theequation}}
\renewcommand{\setminus}{\smallsetminus}
\newcommand{\ssum}[1]{\sum_{\substack{#1}}}
\newcommand{\e}{{\rm e}}
\newcommand{\dd}{{\rm d}}
\newcommand{\ee}{{\varepsilon}}
\newcommand{\dt}{{\tilde \tau}}
\renewcommand{\d}{\tau}
\newcommand{\gsm}{G}
\newcommand{\Conv}{\mathop{\scalebox{1.5}{\raisebox{-0.2ex}{$\ast$}}}}
\newcommand{\B}{{\mathcal B}}
\newcommand{\C}{{\mathbb C}}
\newcommand{\R}{{\mathbb R}}
\renewcommand{\O}{{\mathcal O}}
\newcommand{\Q}{{\mathbb Q}}
\newcommand{\Z}{{\mathbb Z}}
\newcommand{\N}{{\mathbb N}}
\newcommand{\1}{{\mathbf 1}}
\newcommand{\F}{{\mathcal F}}
\newcommand{\cN}{{\mathcal N}}
\newcommand{\bz}{{\mathbf z}}
\newcommand{\bzt}{{\bm \zeta}}
\newcommand{\bbt}{{\bm \beta}}
\newcommand{\bell}{{\bm \ell}}
\newcommand{\bZ}{{\mathbf Z}}
\newcommand{\bb}{{\mathbf b}}
\DeclareMathOperator{\li}{li}
\DeclareMathOperator{\cond}{cond}
\newcommand{\vphi}{{\varphi}}
\renewcommand{\tilde}{\widetilde}
\renewcommand{\bar}{\overline}
\renewcommand{\hat}{\widehat}
\newcommand{\abs}[1]{\left| #1 \right|}
\renewcommand{\mod}[1]{\ ({\rm mod\ }#1)}
\newcommand{\mods}[1]{\ ({\rm mod^*\ }#1)}
\renewcommand\Re{\operatorname{\mathfrak{Re}}}
\renewcommand\Im{\operatorname{\mathfrak{Im}}}
\newcommand{\cu}{{\mathfrak u}}
\numberwithin{equation}{section}
\title[Titchmarsh's problem for multiplicative functions]{Combinatorial identities and Titchmarsh's divisor problem for multiplicative functions}
\date{\today}
\author{Sary Drappeau}
\address{Aix Marseille Université, CNRS, Centrale Marseille, I2M UMR 7373, 13453, Marseille, France}
\email{sary-aurelien.drappeau@univ-amu.fr}
\author{Berke Topacogullari}
\address{EPFL SB MATH TAN, Station 8, 1015 Lausanne, Switzerland}
\email{berke.topacogullari@epfl.ch}
\thanks{We thank O. Ramaré, H. L. Montgomery, R. C. Vaughan, R. de la Bretèche, É. Fouvry and G. Tenenbaum for helpful discussions and remarks on the present work. In particular we thank É. Fouvry and G. Tenenbaum for remarks which led to the second proof presented here. Part of this work was done during a visit of BT to Aix-Marseille university, supported by the French-Austrian joint project MuDeRa (FWF I-1751-N26, ANR-14-CE34-0009).}
\subjclass[2010]{Primary: 11N37; Secondary: 11N25}
\begin{document}

\begin{abstract}
  Given a multiplicative function~$f$ which is periodic over the primes, we obtain a full asymptotic expansion for the shifted convolution sum $\sum_{|h|<n\leq x} f(n) \tau(n-h)$, where~$\tau$ denotes the divisor function and~$ h \in \Z \setminus \{0\} $.
  We consider in particular the special cases where $f$ is the generalized divisor function~$ \tau_z $ with~$z\in\C$, and the characteristic function of sums of two squares (or more generally, ideal norms of abelian extensions).
  As another application, we deduce a full asymptotic expansion in the generalized Titchmarsh divisor problem $\sum_{|h| < n \leq x, \, \omega(n) = k} \tau(n - h)$, where $\omega(n)$ counts the number of distinct prime divisors of~$n$, thus extending a result of Fouvry and Bombieri-Friedlander-Iwaniec.
  
  We present two different proofs: The first relies on an effective combinatorial formula of Heath-Brown's type for the divisor function~$\tau_\alpha$ with~$\alpha\in\Q$, and an interpolation argument in the~$z$-variable for weighted mean values of~$\tau_z$.
  The second is based on an identity of Linnik type for~$\d_z$ and the well-factorability of friable numbers.
\end{abstract}

\maketitle

\section{Introduction}

Understanding correlations of arithmetic functions is a fundamental question in analytic number theory.
In an explicit form, the problem can be stated as determining the asymptotic behaviour of the sum
\begin{equation} \label{eq:general shifted convolution}
  \sum_{1<n\leq x} f(n) g(n-1),
\end{equation}
where \( f, g : \N \to \C \) are arithmetic functions of multiplicative nature.
Many important problems in number theory can be rephrased in terms of correlations of arithmetic functions, the twin prime conjecture or the Goldbach conjecture being two famous examples (see e.g.~\cite[Chapter~1]{Ell94}).
Sums of the form~\eqref{eq:general shifted convolution} also come up prominently in the study of growth properties of \(L\)-functions in the critical strip.
In this context, the problem is known as the shifted convolution problem and has a long and rich history (see~\cite{Mic07} for an overview).

In general, determining the precise asymptotic behaviour of the unweighted correlation~\eqref{eq:general shifted convolution} is a difficult task and only very few unconditional results are known in this direction, all of them requiring at least one of the involved functions to be very close -- in the convolution sense -- to the constant function~$\1$, the divisor function~$\d(n)$ or to Fourier coefficients of \( \operatorname{GL}_2 \)-automorphic forms.
Note that when~$f$ and~$g$ are bounded, the \emph{logarithmically} weighted correlation
$$ \sum_{1<n\leq x} \frac{f(n) g(n-1)}n $$
has been the object of a recent breakthrough of Tao~\cite{Tao}. The case of odd-order correlations for bounded~$f, g$ was recently settled in~\cite{TT}.

In the present paper, we focus on the particularly important case~$g(n) = \d(n)$ of the unweighted problem~\eqref{eq:general shifted convolution}, which is at the edge of current techniques.
If the average value of~$f$ is not too small, it was already observed by Vinogradov~\cite{Vinogradov-BV} (in the case of primes; see also~\cite{Rod65,Hal67}) that simple asymptotic equivalences for the sum
\begin{equation} \label{eq:shited convolution sum}
  \sum_{1<n\leq x} f(n) \d(n-1)
\end{equation}
can be obtained from analogues of the Bombieri-Vinogradov and Brun-Titchmarsh inequalities. We refer to~\cite{Green18, GranvilleShao18} for recent works on this topic.

It is a considerably more difficult problem to obtain full asymptotic expansions for~\eqref{eq:shited convolution sum}, say, with an error term of the form $\O(x(\log x)^{-N})$ where $N>0$ is fixed but can be chosen arbitrarily large.
The gap in difficulty is related to the ``$x^{1/2}$''-barrier for primes in arithmetic progressions on average over moduli.
To our knowledge asymptotic expansions are known for only very few specific examples of functions~\( f \) of arithmetic interest:
\begin{itemize}[leftmargin=2em]
  \item the indicator function of primes~\cite{Fou85,BFI86},
  \item the indicator function of integers without large prime factors~\cite{FouvryTenenbaum1990,D-FI},
  \item the $k$-fold divisor functions~$\d_k(n) $, $k\in\N$, \(k \geq 2 \)~\cite{Motohashi1980,Top16,T-dk}.
\end{itemize}
The methods from the last example can also be used to handle the case where \(f\) is given by Fourier coefficients of \( \operatorname{GL}_2 \)-automorphic forms, although this does not seem to be worked out explicitly in the literature.

The purpose of the present paper is to introduce two new methods which lead to an asymptotic expansion for~\eqref{eq:shited convolution sum} for a wide class of multiplicative functions.
Let \( A, D \geq 1 \) be fixed integers.
Define \( \mathcal F_D(A) \) to be the set of all multiplicative functions~\( f : \N \to \C \) which are $D$-periodic over the primes in the sense that
\[ f(p_1) = f(p_2) \quad \text{for any primes \(p_1\)~and~\(p_2\) with \( p_1 \equiv p_2 \bmod D,\)} \]
and which satisfy the growth condition,
\[ \left| f(n) \right| \leq \d_A(n) \quad \text{for all~\( n \in \N \),} \]
where \( \d_A(n) \) denotes the generalized divisor function.
Our main result is the following preliminary asymptotic formula for the sum~\eqref{eq:shited convolution sum} for~\( f \in \mathcal F_D(A) \).

\begin{theorem} \label{th:main shift 1}
  Let \( A, D, N \geq 1 \). For all~\( f \in \mathcal F_D(A) \) and all~\( x \geq 2 \), we have
  \begin{equation} \label{eq: main theorem shift 1}
    \sum_{1< n \leq x } f(n) \d(n - 1) = 2 \sum_{ \substack{ \text{\(\chi\) primitive} \\ \cond(\chi) \mid D } } \sum_{ \substack{ q \leq \sqrt{ x } \\ \cond(\chi) \mid q} } \frac1{\varphi(q)} \sum_{\substack{q^2 \leq n \leq x \\ (n, q) = 1}} f(n) \chi(n) + \O\left(\frac x{ (\log x)^N}\right),
  \end{equation}
  where the implied constant depends only on~\(A\), \(D\) and~\(N\).
\end{theorem}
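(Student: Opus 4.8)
The plan is to convert the shifted convolution into a family of character sums by the hyperbola method, and then to show that among all primitive characters only those of conductor dividing $D$ leave a main term. First I would apply Dirichlet's hyperbola method to $\d(n-1)$. Starting from $\d(m)=2\sum_{d\mid m,\ d<\sqrt m}1+\1_{\sqrt m\in\Z}$ with $m=n-1$, and using $|f(n)|\le\d_A(n)$ to discard the ``square'' contribution $\sum_{n-1=\square}f(n)$ and the handful of terms lost at the boundary $d^2\le n\le d^2+1$ (these total $\ll\sqrt x(\log x)^{A-1}=\O(x/(\log x)^N)$), one obtains
\[
  \sum_{1<n\le x}f(n)\d(n-1)=2\sum_{q\le\sqrt x}\ \sum_{\substack{q^2\le n\le x\\ n\equiv 1\ (\mathrm{mod}\ q)}}f(n)+\O\!\left(\frac{x}{(\log x)^N}\right).
\]

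Since $n\equiv 1\bmod q$ forces $(n,q)=1$, orthogonality of the Dirichlet characters mod $q$ gives
\[
  \sum_{\substack{q^2\le n\le x\\ n\equiv 1\ (\mathrm{mod}\ q)}}f(n)=\frac1{\varphi(q)}\sum_{\chi\bmod q}\ \sum_{\substack{q^2\le n\le x\\(n,q)=1}}f(n)\chi(n),
\]
and, thanks to the coprimality condition, each $\chi$ may be replaced by the primitive character $\chi^{*}$ inducing it. Reorganising the resulting double sum as a sum over primitive $\chi^{*}$ and over the multiples $q\le\sqrt x$ of $\cond(\chi^{*})$, the contribution of the $\chi^{*}$ with $\cond(\chi^{*})\mid D$ is exactly the main term on the right of~\eqref{eq: main theorem shift 1}. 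It thus remains to prove that
\[
  \mathcal E:=\sum_{\substack{\chi^{*}\ \mathrm{primitive}\\ \cond(\chi^{*})\nmid D}}\ \sum_{\substack{q\le\sqrt x\\ \cond(\chi^{*})\mid q}}\frac1{\varphi(q)}\left|\,\sum_{\substack{q^2\le n\le x\\(n,q)=1}}f(n)\chi^{*}(n)\right|\ \ll\ \frac{x}{(\log x)^N}.
\]

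The reason $\mathcal E$ is small is structural: the twisted function $f\chi^{*}$ has no mean value when $\cond(\chi^{*})\nmid D$. Writing $f(p)=\sum_{\psi\bmod D}c_\psi\psi(p)$ for $p\nmid D$, one has $\sum_n f(n)\chi^{*}(n)n^{-s}=\prod_{\psi\bmod D}L(s,\psi\chi^{*})^{c_\psi}\cdot G_{\chi^{*}}(s)$ with $G_{\chi^{*}}$ holomorphic and bounded for $\Re s>\tfrac12$ uniformly in $\chi^{*}$, and the hypothesis $\cond(\chi^{*})\nmid D$ forces every $\psi\chi^{*}$ to be non-principal, so this Dirichlet series is regular at $s=1$. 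For $\cond(\chi^{*})\le(\log x)^{B}$, with $B=B(A,D,N)$ large, a Siegel--Walfisz/Selberg--Delange argument (a contour shift using the classical zero-free region for the $L(s,\psi\chi^{*})$, with Siegel's theorem absorbing a possible exceptional modulus) yields $\big|\sum_{q^2\le n\le x,\,(n,q)=1}f(n)\chi^{*}(n)\big|\ll x/(\log x)^{B'}$ for any fixed $B'$, uniformly in $q$; summing $1/\varphi(q)$ over $q\le\sqrt x$ and over the $\ll(\log x)^{2B}$ admissible $\chi^{*}$ costs only a bounded power of $\log x$, so this range is acceptable.

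The main obstacle is the complementary range $\cond(\chi^{*})>(\log x)^{B}$, where the zero-free region is too weak to handle a single character sum and one must gain from the average over $q$ (equivalently over the $\chi^{*}$): this is a Bombieri--Vinogradov-type estimate for $f\in\mathcal F_D(A)$ \emph{at the critical level $q\le\sqrt x$}. The plain large sieve does not suffice, since even after Cauchy--Schwarz and a dyadic splitting it only controls the $\ell^2$-norm of the family $\{\sum_n f(n)\chi^{*}(n)\}_{\chi^{*}}$ and leaves a bound of size $x(\log x)^{O(A^2)}$, not $o(x)$. The missing cancellation is supplied by the combinatorial identities announced in the introduction: for $f=\d_z$ one inserts a Heath-Brown-type identity for $\d_\alpha$ with $\alpha\in\Q$ to expand $\sum_n\d_z(n)\chi^{*}(n)$ into bilinear forms $\sum_m\alpha_m\chi^{*}(m)\sum_n\beta_n\chi^{*}(n)$ in suitably balanced ranges, to which the large sieve \emph{does} apply with a power saving, then interpolates in the parameter $z$ to pass to the whole complex-analytic family and, via the factorisation of $\sum_n f(n)n^{-s}$ as $\prod_{\psi\bmod D}L(s,\psi)^{c_\psi}$ times an absolutely convergent factor, to general $f\in\mathcal F_D(A)$; alternatively one uses a Linnik-type identity for $\d_z$ together with the well-factorability of friable integers to produce the requisite factorization. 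Granting this estimate, $\mathcal E\ll x/(\log x)^N$, and collecting the error terms identifies the surviving contribution with the right-hand side of~\eqref{eq: main theorem shift 1}.
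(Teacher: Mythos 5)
Your reduction (hyperbola method, orthogonality mod $q$, passage to primitive characters, Siegel--Walfisz for the characters of small conductor not dividing $D$, and identification of the conductor-dividing-$D$ contribution with the main term) parallels the paper's final step (Section~7, via the truncated approximant $\dt_h(n;R)$ and Lemma~\ref{lemma: bound for the average of psi(n) f(n)}). The gap is in the range you yourself flag as the main obstacle, $\cond(\chi^{*})>(\log x)^{B}$: your claim that a Heath-Brown-type decomposition produces bilinear forms ``to which the large sieve does apply with a power saving'' at level $q\le\sqrt x$ is not correct, and no version of the multiplicative large sieve can close this range. For bilinear forms with both factors of length in $[x^{1/3},x^{2/3}]$ (which Heath-Brown's identity inevitably produces), the large sieve bound at $Q\asymp\sqrt x$ gives, after the dyadic splitting and the $1/\varphi(q)$ weighting, a bound of size $x(\log x)^{O(1)}$ with no saving whatsoever --- this is exactly the ``$x^{1/2}$-barrier'' the paper points to, and it is why the Titchmarsh divisor problem resisted until Fouvry and Bombieri--Friedlander--Iwaniec. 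Nor can you simply discard the top of the range: the moduli $q\in(\sqrt x/(\log x)^{B},\sqrt x]$ contribute $\asymp x\log\log x$ to the main term, far exceeding the target error $x/(\log x)^{N}$, so a Bombieri--Vinogradov-quality statement at level $x^{1/2-\varepsilon}$ (which the large sieve can give) does not suffice.

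What actually closes this range in the paper is not multiplicative-character technology at all. The heart of the argument is Proposition~\ref{prop:bound-largechars}, which bounds $\sum_{n\in I} f(n)\Delta_h(an;R)$ with $\Delta_h=\d(\cdot-h)-\dt_h(\cdot;R)$, where $\dt_h$ only involves characters of conductor $\le R=(\log x)^{O(1)}$; the congruence $q\mid n-h$ for $q$ up to $\sqrt x$ is treated additively. After the combinatorial decomposition (Heath-Brown type for rational $z$ plus interpolation, or the Linnik-type identity), the type~I/type~$\d_1$ pieces are handled by Voronoi-type equidistribution of $\d_2$ in progressions (Lemma~\ref{lem:Delta for large M}), the type~$\d_2$ pieces by a uniform $\d_2\times\d_2$ shifted-convolution estimate resting on the spectral theory of automorphic forms (Lemma~\ref{lem:Delta for large M_1, M_2}), and the type~II pieces by Linnik's dispersion method together with the Deshouillers--Iwaniec bounds for sums of Kloosterman sums (Lemma~\ref{lem:equidistrib-bilin}). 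These inputs are what break the $\sqrt x$-barrier; without them (or a substitute of comparable strength), the large-conductor/large-modulus range in your argument remains unproved, so the proposal as written does not establish the theorem.
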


\begin{remarks}\
  \begin{itemize}[leftmargin=2em]
  \item The main term in~(\ref{eq: main theorem shift 1}) can be evaluated asymptotically by classical methods, for instance the Selberg-Delange method~\cite[Chapter~II.5]{Ten15}. We spell this out in detail in three particular cases below.
  \item We stress that the implied constant is uniform in all~$f\in\F_D(A)$, and depends only on~$A, D$ and~$N$. This feature can be useful in applications (see Section~\ref{subsection: titchmarsh divisor problem}).
  \item On the other hand, our result is badly behaved with respect to~\(D\), partly due to the use of the Siegel-Walfisz theorem.
    The arguments presented here do not seem sufficient to obtain an improvement in this aspect, although this does not affect our applications.
  \item The error term in~(\ref{eq: main theorem shift 1}) corresponds to an application of the Siegel-Walfisz theorem. If the Riemann hypothesis is true for all Dirichlet~$L$-functions, then it can be improved to~$\O(x^{1-\delta})$ for some absolute constant~$\delta>0$.
  \end{itemize}
\end{remarks}

Theorem~\ref{th:main shift 1} may also be interpreted as a result of Bombieri-Vinogradov type ``beyond~$\sqrt{x}$'' for the average of~$f\in\F_D(A)$ in the residue classes of a fixed integer and without absolute values: For~$f\in\F_D(A)$,
$$ \sum_{q\leq \sqrt{x}} \Big(\ssum{1< n \leq x \\ n\equiv 1 \bmod{q} } f(n)  - \frac1{\vphi(q)}\sum_{ \substack{ \text{\(\chi\) primitive} \\ \cond(\chi) \mid (D,q) } } \ssum{1<n \leq x \\ (n, q) = 1} f(n) \chi(n)\Big) = \O_{A,D,N}\left(\frac x{ (\log x)^N}\right). $$
We refer to~\cite{Green18,GranvilleShao18} for recent works related to this point of view.

In many applications correlation sums with more general shifts appear and it is important to have results which are uniform in large ranges of the involved parameters.
Our methods are robust enough to be applied to these cases as well, and Theorem~\ref{th:main shift 1} is in fact the special case~$a=h=1$ of the following more general result.

\begin{theorem}[General shifts] \label{th:main}
  Let \( A, D, N \geq 1 \).
  There exists an absolute constant~\( \delta > 0 \), such that, for all~\( f \in \mathcal F_D(A) \), all~\( x \geq 2 \) and all~\( a, h \in \Z \) satisfying~\( 1 \leq a, |h| \leq x^\delta \), we have
  \begin{equation*} \label{eq: main theorem}
    \sum_{  |h|/a < n \leq x } f(n) \d(an - h) = M_f(x; h, a) + \O\left( \d( (a, h) ) \frac x{ (\log x)^N } \right),
  \end{equation*}
  where \( M_f(x; a, h) \) is given by
  \[ M_f(x; a, h) := 2 \sum_{ \substack{ \text{\(\chi\) primitive} \\ \cond(\chi) \mid D } } \sum_{ \substack{ q \leq \sqrt{ ax } \\ \cond(\chi) \mid \frac q{ (q, h) } } } \frac{ \overline\chi\left( \tfrac h{ (h, q) } \right) }{ \varphi\left( \frac q{ (h, q) } \right) } \sum_{ \substack{ q^2/a \leq n \leq x \\ (an, q) = (h, q) } } f(n) \chi\left( \tfrac{an}{ (an, q) } \right), \]
  and where the implied constant depends only on~\(A\), \(D\) and~\(N\).
\end{theorem}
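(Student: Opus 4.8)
The plan is to first reduce the shifted convolution sum to a Bombieri-Vinogradov-type statement for $f$ in arithmetic progressions to moduli as large as $\sqrt{ax}$, with the genuine main term subtracted and with no absolute value inside, and then to prove that statement by combining the multiplicative structure of $f\in\F_D(A)$ with one of the two approaches announced in the abstract.

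\emph{Reduction by the hyperbola method.} Writing $\d(m)=\sum_{d_1d_2=m}1$ and separating the divisors of $m=an-h$ according to whether the smaller one lies below $\sqrt{ax}$ --- the contribution of perfect squares among the $an-h$, and the slight discrepancy in the resulting range of~$n$, being $O(x^{1/2+\ee})$ --- one obtains
\[
 \sum_{|h|/a<n\le x} f(n)\,\d(an-h)=2\sum_{q\le\sqrt{ax}}\ \ssum{q^2/a\le n\le x\\ an\equiv h\,(q)} f(n)+O\bigl(x^{1/2+\ee}\bigr).
\]
The congruence $an\equiv h\,(q)$ is solvable only when $(a,q)\mid h$, and then determines $n$ modulo $q/(a,q)$; carrying the common factors $(a,q)$ and $(h,q)$ through the computation produces the gcd's appearing in $M_f$ on the one hand, and, after splitting the $q$-sum according to the value of $(a,q)\mid(a,h)$, the factor $\d((a,h))$ in the final error on the other (the trivial bounds needed along the way, e.g.\ for the perfect-square term or for short sums of $\d_A$ over progressions, being supplied by Shiu's theorem). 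It thus remains to evaluate $T_q:=\sum_{q^2/a\le n\le x,\ an\equiv h\,(q)}f(n)$ on average over $q\le\sqrt{ax}$.

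\emph{Isolating the main term.} Since $f\in\F_D(A)$ the value $f(p)$ depends only on $p\bmod D$, so $f(p)=\sum_{\chi\bmod D}\hat c(\chi)\,\chi(p)$ for coefficients with $|\hat c(\chi)|\le A$; correspondingly the Dirichlet series of $f$ factors as $\prod_{\chi\bmod D}L(s,\chi)^{\hat c(\chi)}\cdot G(s)$, where $G(s)=\sum_n g(n)n^{-s}$ converges absolutely for $\Re s>\tfrac12$ --- indeed $g$ is supported on integers all of whose prime factors coprime to~$D$ occur to order at least~$2$, whence $\sum_n|g(n)|/n\ll_{A,D}1$. Writing $f$ accordingly as a short Dirichlet convolution of~$g$ with a multiplicative function built from the $\d_z$'s twisted by characters modulo~$D$, inserting this into $T_q$ and using orthogonality of Dirichlet characters isolates a main term carried by the \emph{primitive} characters of conductor dividing~$D$; summing these over $q\le\sqrt{ax}$ and rearranging (a routine but lengthy computation with Gauss sums and the divisor convolution) produces exactly $M_f(x;a,h)$, the contribution of~$g$ to the error being harmless by the bound on $\sum|g(n)|/n$. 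One is then left with the cancellation estimate
\[
 \sum_{q\le\sqrt{ax}}\ \Bigl|\ \ssum{n\le x\\ an\equiv h\,(q)\\ (n,q)=(h,q)}\d_z(n)\,\chi_1(n)\ -\ (\text{expected main term})\ \Bigr|\ \ll_{A,D,N}\ \d((a,h))\,\frac{x}{(\log x)^N},
\]
to be proved uniformly for $\chi_1$ of conductor dividing~$D$ and for $z$ in a bounded set depending on~$A$.

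\emph{The range beyond $\sqrt x$.} This last estimate is the heart of the matter, the difficulty being that $q$ runs up to $\sqrt{ax}$, genuinely beyond $\sqrt x$. I would establish it by either of the two routes described in the abstract. In the first, one applies the effective Heath-Brown-type combinatorial identity for $\d_\alpha$ with $\alpha\in\Q$ to decompose $\d_\alpha$ into $O((\log x)^{O(1)})$ Dirichlet convolutions of truncated factors; the resulting Type~I and Type~II bilinear sums over arithmetic progressions are handled by the dispersion method of Fouvry-Iwaniec and Bombieri-Friedlander-Iwaniec, the average over the modulus~$q$ together with the bilinear structure allowing, via Weil's bound for Kloosterman sums, an admissible range reaching past~$\sqrt x$; one then passes from rational~$\alpha$ to complex~$z$ by an interpolation argument, using that the error, taken without absolute values, is entire in~$z$ and of controlled growth, so that a sufficiently strong bound on a well-chosen finite set of rationals propagates --- via a Hadamard three-lines type inequality --- to the whole bounded region of interest. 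In the second route the combinatorial identity is replaced by a Linnik-type identity for~$\d_z$, and the factorisations needed to run the dispersion method are provided, essentially for free, by the well-factorability of friable integers. In both cases the one genuinely hard step is this bilinear estimate beyond~$\sqrt x$ with good uniformity in~$a$, $h$ and~$z$; by comparison the bookkeeping of the first two steps --- gcd conditions, perfect-square and near-diagonal terms, and the reassembly of the main term --- is technical but routine.
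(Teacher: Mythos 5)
Your overall route is the same as the paper's: open $\tau(an-h)$ into progressions to moduli $q\le\sqrt{ax}$, reduce a general $f\in\mathcal F_D(A)$ to character-twisted divisor functions via the factorization of its Dirichlet series, extract the main term from the characters of conductor dividing~$D$, and prove the remaining equidistribution beyond $\sqrt{x}$ by a combinatorial identity (Heath-Brown type plus interpolation at rationals, or Linnik type plus friable factorizations) fed into Voronoi and dispersion estimates. Two of your steps, however, would not go through as written. First, your displayed ``cancellation estimate'' carries absolute values over $q\le\sqrt{ax}$, contradicting your own opening sentence and demanding far more than the toolkit you invoke can deliver: an absolute-value Bombieri--Vinogradov statement past the $\sqrt{x}$-barrier is not what the Voronoi/dispersion arguments produce (they exploit the fixed shift $h$ and a \emph{signed} sum over the moduli, or well-factorable weights), and the analogous statement for primes is a well-known open problem. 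Fortunately it is also more than you need: the hyperbola reduction only requires the signed sum over $q$, which is exactly how the paper organizes matters through the approximant $\tilde\tau_h(n;R)$ of \eqref{eq:def-dthR} and Proposition~\ref{prop:bound-largechars}, where the $q$-sum stays inside.

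Second, orthogonality does not by itself ``isolate a main term carried by the primitive characters of conductor dividing $D$'': after detecting the congruence by characters modulo $q/(h,q)$, the discarded part contains, for every $q$, characters of small conductor \emph{not} dividing $D$, and the bilinear machinery cannot see the threshold $\cond(\chi)\mid D$ --- its natural main terms are Ramanujan-sum main terms, i.e.\ the contribution of all characters of conductor up to a free parameter $R$. This forces the two-tier structure of the paper: one first proves equidistribution against the conductor-$\le R$ approximant with $R=(\log x)^{O(1)}$, saving a power of $R$, and only then bridges from conductor $\le R$ to conductor dividing $D$ by a Siegel--Walfisz/Selberg--Delange bound for $\sum_{n\le x}\chi(n)f(n)$ when $\cond(\chi)\nmid D$ (Lemma~\ref{lemma: bound for the average of psi(n) f(n)}); this bridge is the source of the ineffectivity and of the $(\log x)^{-N}$-only error term, and it is missing from your plan. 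A smaller point: the interpolation mechanism you name (Hadamard three-lines) cannot work as stated, since the quantity to be interpolated is a polynomial in $z$ of degree of order $\log x$; one must first truncate the degree to $O(\log\log x)$ using large deviations of $\omega(n)$ and then apply Lagrange interpolation at rationals with denominators of size $O(\log\log x)$, which in turn requires the rational-parameter case to be proved uniformly in the denominators (this is precisely what the Heath-Brown-type identity and Proposition~\ref{prop:bound-largechars-rational} are built to provide). Your second route, Linnik's identity combined with the well-factorability of friable numbers, avoids interpolation altogether and matches Section~\ref{section: proof using Linnik's identity}.
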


Unfortunately, the range of uniformity in~\(h\) in Theorem~\ref{th:main} is comparatively short.
This is due to a known uniformity issue of arguments based on exponential sums estimates underlying our bilinear sums estimate (see~\cite[p.~200]{FI-PAP}).
Out of the same reason, the methods used here are not able to address the dual problem
$$ \sum_{ n = 1 }^{N - 1} f(n) \d(N - n) $$
(for which results are available for instance when~$f=\d$ or~$f = \d_3$, see~\cite{Mot94, Top16}).

We mention that results are known for affine correlations whose linear parts are pairwise independent~\cite{Matthiesen-d2,Matthiesen-f}, or when there is an additional, long enough average over the shift~\cite{Mikawa,MRT1,MRT2}. See also~\cite{ABSR,BSF} for a function field analogue in the large~$q$ limit.

Finally, we mention the work of Pitt~\cite{Pit13}.
He considered an analogue of the Titchmarsh divisor problem (see Section~\ref{subsection: titchmarsh divisor problem}) with the divisor function replaced by Fourier coefficients of holomorphic cusp forms.
In many situations, these Fourier coefficients and the divisor function exhibit a similar behaviour, since the latter can also be viewed as the Fourier coefficients of an Eisenstein series (see \textit{e.g.}~\cite[Chapter~3.4]{Iwa02}).
Remarkably, Pitt obtained an estimate with a power saving in the error term unconditionally, something which is not known for the original Titchmarsh divisor problem.
It seems possible that his ideas can be adapted to our setting, and that one might obtain an analogue of Theorem~\ref{th:main} with the divisor function replaced by Fourier coefficients of holomorphic cusp forms and with a power saving in the error term.
We do not pursue this here.

We apply Theorem~\ref{th:main} to three functions~$f$ of particular arithmetic interest:
\begin{enumerate}
  \item the generalized divisor functions~$\d_z(n)$ with~\( z \in \C \),
  \item the indicator function of integers~$n$ which are norms of an integral ideal in an abelian extension,
  \item the indicator function of integers~$n$ with exactly~$k$ different prime factors.
\end{enumerate}

\subsection{Correlations of divisor functions} \label{subsection:correlations of divisor functions}

Our first application is related to the generalized additive divisor problem, which asks for an asymptotic evaluation of
\[ D_{k, \ell}(x, h) := \sum_{ |h| < n \leq x } \d_k(n) \d_\ell(n + h) \]
for integers~\( k, \ell \geq 2 \).
This problem has received a lot of attention, partly motivated by its connection to the \(2k\)-th moment of the Riemann zeta function (see~\cite[Chapter~4]{Ivi91} or~\cite{CK16, NT}).

It is conjectured that for some constant~\( C_{k, \ell}(h) > 0 \),
\[ D_{k, \ell}(x, h) \sim C_{k, \ell}(h) x (\log x)^{k + \ell - 2}, \]
and it is known~\cite{Henriot} that this is the correct order of magnitude.
However, this has been proven only for the cases where either~\( k = 2 \) or~\( \ell = 2 \).
In these cases, the best-known results in the literature are of the form
$$  D_{k, 2}(x, h) = x P_{k, h}(\log x) + \O\left( x^{ \theta_k + \varepsilon} \right) \quad \text{for} \quad h \ll x^{\eta_k}, $$
where~$P_{k,h}$ is a degree~$k$ polynomial depending on~$h$, with
\begin{alignat*}{3}
  \theta_2 &= \tfrac23 \quad &\text{and} \quad \eta_2 &= \tfrac23 \qquad &&\text{\cite{DI82, Mot94},} \\
  \theta_3 &= \tfrac89 \quad &\text{and} \quad \eta_3 &= \tfrac23 \qquad &&\text{\cite{FI-d3, Top16},} \\
  \theta_k &= \max\big( 1 - \tfrac4{15k - 9}, \tfrac{56}{57} \big) \quad &\text{and} \quad \eta_k &= \tfrac{15}{19} \qquad \text{(\( k \geq 4 \)~fixed)} \qquad &&\text{\cite{Linnik, FT-dk, T-dk}.}
\end{alignat*}
In the case~\( k = \ell = 2 \), a similar asymptotic formula holds in a much larger range of uniformity for~\(h\), although with a weaker error term (see~\cite{Meu01} for the currently best results in this direction).
For~\( k, \ell \geq 3 \) the problem remains completely open.

The functions~\( \d_k \) are special cases of coefficients of the Dirichlet series
$$ \sum_{n = 1}^\infty \frac{\d_z(n)}{n^s} := \zeta(s)^z \quad \text{for} \quad z \in \C \quad\text{and}\quad \Re(s)>1. $$
On prime powers, they are given explicitly by
\begin{equation} \label{eq:dalpha-pnu}
  \d_z(p^\ell) = \binom{z + \ell - 1}\ell.
\end{equation}
The functions~$\d_z$ for~$z\not\in\N$ have a more complicated behaviour than those for~$z\in\N$. When~\( z = - 1 \) for instance, we recover the M\"obius function~\( \d_{-1}(n) = \mu(n) \).

Theorem~\ref{eq: main theorem} leads to an asymptotic expansion of~\( D_{z, 2}(x, h) \) for arbitrary~\( z \in \C \), uniformly in any fixed disk~$\abs{z}\ll 1$.

\begin{theorem} \label{cor:piltz}
  Let~\( A, N \geq 1 \) and~\( \varepsilon > 0 \).
  There exist a constant~\( \delta > 0 \) and holomorphic functions~\( \lambda_{h, \ell} : \C \to \C \), such that, for~\( |z| \leq A \), \( x \geq 2 \) and~\( 1 \leq |h| \leq x^\delta \),
  \begin{equation} \label{eq:estim-piltz}
    \sum_{ |h| < n \leq x} \d_z(n) \d(n + h) = x (\log x)^z \sum_{\ell = 0}^N \frac{ \lambda_{h, \ell}(z) }{ (\log x)^\ell } + \O\left( \frac{ x (\log x)^{ \Re(z) } }{ (\log x)^{ N + 1 - \varepsilon } } \right),
  \end{equation}
  where the implicit constant only depends on~\( A\),~\(N\)~and~\(\varepsilon\).
\end{theorem}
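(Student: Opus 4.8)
The plan is to recognize $\d_z$ as a member of the class $\F_D(A)$ with $D=1$, apply Theorem~\ref{th:main}, and then evaluate the resulting main term by the Selberg--Delange method. First I would observe that $\d_z \in \F_1(A)$ whenever $A$ is a positive integer with $A \geq |z|$ (and for the purposes of this statement we may assume $A$ is an integer): the value $\d_z(p) = z$ does not depend on the prime $p$, so $\d_z$ is trivially $1$-periodic over the primes, and \eqref{eq:dalpha-pnu} gives $|\d_z(p^\ell)| = \bigl|\binom{z+\ell-1}{\ell}\bigr| \leq \binom{A+\ell-1}{\ell} = \d_A(p^\ell)$, whence $|\d_z(n)| \leq \d_A(n)$ for all $n$. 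I would then invoke Theorem~\ref{th:main} with $f = \d_z$, $D = 1$, $a = 1$, with the shift $h$ replaced by $-h$, and with the parameter $N$ replaced by a sufficiently large $N' = N'(A,N)$. Since the only primitive character of conductor dividing $D = 1$ is the trivial one, and since $(a,h) = (1,-h) = 1$, this yields, uniformly for $1 \leq |h| \leq x^\delta$ with $\delta$ the constant of Theorem~\ref{th:main},
\begin{equation}\label{eq:piltz-reduced}
  \sum_{|h| < n \leq x} \d_z(n)\d(n+h) = 2\sum_{q \leq \sqrt{x}} \frac{1}{\varphi\bigl(q/(h,q)\bigr)} \ssum{q^2 \leq n \leq x \\ (n,q) = (h,q)} \d_z(n) + \O\Bigl(\frac{x(\log x)^{\Re(z)}}{(\log x)^{N+1}}\Bigr),
\end{equation}
provided $N'$ is large enough that $x(\log x)^{-N'} \ll x(\log x)^{\Re(z)-N-1}$; this holds for $N' \geq N + 1 + A$, since $\Re(z) \geq -|z| \geq -A$. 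It then remains to evaluate the main term in \eqref{eq:piltz-reduced}.

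For the main term I would split the inner sum as $U_{q,h}(x) - U_{q,h}(q^2)$, where $U_{q,h}(y) := \sum_{n \leq y,\ (n,q)=(h,q)} \d_z(n)$, and apply the Selberg--Delange method to each $U_{q,h}$ (see~\cite[Chapter~II.5]{Ten15}). The Dirichlet series $\sum_{(n,q)=(h,q)} \d_z(n)n^{-s}$ factors as $\zeta(s)^z H_{q,h}(s)$, where the constraint $(n,q)=(h,q)$ only alters the Euler factors at the primes $p \mid q$: it forces $p \nmid n$ when $p \nmid h$, it forces $v_p(n) = v_p(h)$ when $0 \leq v_p(h) < v_p(q)$, and it forces $v_p(n) \geq v_p(q)$ when $v_p(h) \geq v_p(q)$. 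Using \eqref{eq:dalpha-pnu} one checks that each of these local factors, divided by the corresponding factor of $\zeta(s)^z$, is holomorphic in a Siegel--Walfisz-type domain $\Re(s) \geq 1 - c/\log\bigl(q(2 + |\Im(s)|)\bigr)$, is holomorphic in $z$, and is bounded there in absolute value by a constant depending only on $A$. Since $H_{q,h}$ is a product of $\omega(q)$ such local factors, it is holomorphic in that domain and satisfies $|H_{q,h}| \ll_A \d(q)^{O_A(1)}$, uniformly for $|z| \leq A$.

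The Selberg--Delange method then furnishes, for each $q$, an asymptotic expansion of the shape $U_{q,h}(y) = y(\log y)^{z-1} \sum_{0 \leq j < K} a_j(q,h;z) (\log y)^{-j} + \O_K\bigl(y(\log y)^{\Re(z)-K} \d(q)^{O_A(1)}\bigr)$, valid for any fixed $K$, with coefficients $a_j(q,h;z)$ holomorphic in $z$ and of size $\ll_{K,A} \d(q)^{O_A(1)}$. Inserting this (with $K$ large) at $y = x$ and at $y = q^2$, multiplying by $2/\varphi(q/(h,q))$, and summing over $q \leq \sqrt{x}$, I would argue as follows. The remainders sum to $\O\bigl(x(\log x)^{\Re(z) - K + O_A(1)}\bigr)$ since $\sum_{q \leq \sqrt{x}} \d(q)^{O_A(1)}/\varphi(q/(h,q)) \ll_{A,h} (\log x)^{O_A(1)}$, and this is absorbed into the error of \eqref{eq:piltz-reduced} once $K$ is large enough in terms of $A$ and $N$. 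For the $U_{q,h}(x)$-part one is left with the Mertens-type sums $\sum_{q \leq \sqrt{x}} a_j(q,h;z)/\varphi(q/(h,q))$, whose generating Dirichlet series in the $q$-variable has a pole at the origin; evaluating them by Perron's formula and reorganizing in powers of $\log x$ produces an expansion in $x(\log x)^z, x(\log x)^{z-1}, \dots$, the leading power $x(\log x)^z$ arising because the $q$-summation carries one extra logarithm relative to the $n$-summation. The $U_{q,h}(q^2)$-part, for which $q^2/\varphi(q/(h,q)) \asymp_h q$ makes the corresponding $q$-sum converge without a pole, is one logarithmic order smaller and feeds only into the coefficients $\lambda_{h,1}, \dots, \lambda_{h,N}$; the finitely many small $q$, for which the Selberg--Delange expansion of $U_{q,h}(q^2)$ degenerates, contribute $\O_A(1)$ and are negligible. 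Collecting everything and truncating at order $(\log x)^{\Re(z)-N-1}$ gives \eqref{eq:estim-piltz}, with the $\varepsilon$ in the error absorbing the finitely many powers of $\log\log x$ and of $\log x$ picked up along the way. Finally, the coefficients $\lambda_{h,\ell}(z)$ are holomorphic on all of $\C$: the factor $1/\Gamma(z)$ is entire, the Taylor coefficients at $s = 1$ of $(s-1)^z \zeta(s)^z H_{q,h}(s)$ entering the Selberg--Delange method depend holomorphically on $z$, and the $q$-summations and the Perron integrals converge locally uniformly in $z$.

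The step I expect to be the main obstacle is the uniformity in the modulus $q$: the Selberg--Delange method must be carried out with explicit control of the $q$-dependence of $H_{q,h}$, of its domain of holomorphy, and of the coefficients $a_j(q,h;z)$ and the remainders, so that after weighting by $\varphi(q/(h,q))^{-1}$ and summing over the $\asymp \sqrt{x}$ moduli $q \leq \sqrt{x}$ everything collapses to the main term plus $\O\bigl(x(\log x)^{\Re(z)-N-1+\varepsilon}\bigr)$ --- which is delicate precisely because the individual main terms are of size comparable to $x$ while there are $\sqrt{x}$ of them. The structural point that makes this work is that $H_{q,h}$ is a product of $\omega(q)$ local factors, each bounded in terms of $A$ but not tending to $1$, hence of size $\ll_A \d(q)^{O_A(1)}$, which is summable against $\varphi(q/(h,q))^{-1}$ with only a power-of-$\log$ loss; that loss, along with the loss from truncating the Selberg--Delange expansions, is absorbed by taking their order (and the parameter $N'$ in Theorem~\ref{th:main}) large in terms of $A$ and $N$. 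Since only savings by powers of $\log x$ are required, no input beyond the Siegel--Walfisz theorem is needed, in keeping with the remarks following Theorem~\ref{th:main}.
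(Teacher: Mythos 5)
Your overall route is the same as the paper's: reduce to Theorem~\ref{th:main} with $f=\d_z\in\F_1(A)$ (that step is fine, including the check $|\d_z(p^\ell)|\le\d_A(p^\ell)$ and enlarging $N$), and then evaluate the main term $2\sum_{q\le\sqrt x}\varphi(q/(h,q))^{-1}\sum_{q^2\le n\le x,\,(n,q)=(h,q)}\d_z(n)$ by Selberg--Delange followed by a contour/Perron evaluation of the resulting $q$-sums; this is exactly what Section~\ref{section:proof of main theorem} does. The gap is in the step you yourself flag as the main obstacle: uniformity in $h$. You bound $H_{q,h}$, the Selberg--Delange coefficients and the remainders by $\d(q)^{O_A(1)}$ and then assert that $\sum_{q\le\sqrt x}\d(q)^{O_A(1)}/\varphi(q/(h,q))$ loses only a power of $\log x$ (your ``$\ll_{A,h}$'' silently hides the problem). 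This is false uniformly in $1\le|h|\le x^\delta$: taking $q=d$ for each divisor $d\mid h$ gives $\varphi(q/(h,q))=\varphi(1)=1$, so the sum is at least $\d(h)$, which for $h$ a primorial of size about $x^\delta$ is $\exp(c\,\log x/\log\log x)$, larger than any fixed power of $\log x$. Consequently no choice of the truncation order $K$ (nor of the auxiliary $N'$) makes your summed remainder $\O\big(x(\log x)^{\Re(z)-N-1+\varepsilon}\big)$ with a constant depending only on $A,N,\varepsilon$; for such $h$ your error bound even exceeds the main term.

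What you are not exploiting is that for $p\mid(q,h)$ the condition $(n,q)=(h,q)$ forces $p^{v_p((q,h))}\mid n$, so the local factors of $H_{q,h}$ at these primes are not merely $O_A(1)$ but of size $O_A\big(p^{-v_p((q,h))\sigma}\big)$; hence $H_{q,h}$, its Taylor data at $s=1$, and the Selberg--Delange remainder all carry a factor essentially $(q,h)^{-1}$ times divisor-bounded quantities. With that decay the summed errors are $\ll(\log x)^{O(1)}\sum_{d\mid h}\d(d)^{O(1)}/d\ll(\log x)^{O(1)}(\log\log x)^{O(1)}$, which the $\varepsilon$ absorbs. This is exactly what the paper builds in by first extracting $u=(h,q)$ and the part $v\mid u^\infty$ of $n$, leading to \eqref{eq: main term for d_alpha}, where the Selberg--Delange error for $D(y;uq,uv)$ carries the crucial factor $(\log t)^{L+1}/t$ with $t=uv$. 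So your argument needs this refinement (either perform the $u,v$ extraction as in the paper, or track the $(q,h)^{-s}$ decay inside $H_{q,h}$ explicitly and rework the $q$-summation); the remaining steps --- Perron for the coefficient sums over $q$, partial summation for the $U_{q,h}(q^2)$ contribution, holomorphy of the $\lambda_{h,\ell}$ in $z$ --- agree in outline with the paper's and are fine.
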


The coefficients~\( \lambda_{h, \ell}(z) \) can be computed explicitly; see~\eqref{eq:first-coeff-dz} \textit{infra} for an expression of the leading coefficient.
If \(z\) is a non-positive integer, all the coefficients~\( \lambda_{h, \ell}(z) \) vanish and \eqref{eq:estim-piltz} effectively becomes an upper bound.

Our method leads to a power saving error term in Theorem~\ref{cor:piltz} when~\( z = k\in \N \).
This is solely due to the fact that in these cases the \( k \)-th power of Dirichlet \(L\)-functions~\( L(s, \chi)^k \) can be continued analytically to a strip~$\Re(s)\geq 1-\delta$ for some~$\delta>0$ (excluding the possible pole at~\( s = 1 \)).
We do not focus of the case~$z\in \N$ here, since the works mentioned above then give quantitatively stronger estimates.

\subsection{Norms of integral ideals}

Let~$K/\Q$ be a Galois extension with discriminant~\( \Delta_K \).
We define
$$ \cN_K := \{ N(\alpha) : \alpha \text{ ideal of }\O_K, \, \alpha\neq 0\}. $$
This set has a rich multiplicative structure, described by the Artin reciprocity law.
When the extension is abelian, the Dedekind function~$\zeta_K(s)$ factorizes into Dirichlet $L$-functions mod~$\Delta_K$, so that the integers in~$\cN_K$ can be detected by looking at the congruence classes of their prime factors mod~$\Delta_K$.
Theorem~\ref{th:main} eventually applies and leads to the following result.

\begin{theorem} \label{cor:norms of ideals}
  Let~$K/\Q$ be an abelian field extension.
  Let~\( N \geq 1 \) and~\( \varepsilon > 0 \).
  There exist a constant~\( \delta > 0 \) and real numbers~\( \kappa_{h, \ell}(K) \), such that, for~\( x \geq 2 \) and~\( 1 \leq |h| \leq x^\delta \),
  \begin{equation} \label{eq:estim-frob}
    \sum_{ \substack{ |h| < n \leq x \\ n\in\cN_K} } \d(n - h) = x (\log x )^{1-1/[K:\Q]} \sum_{\ell = 0}^N \frac{ \kappa_{h, \ell}(K) }{ (\log x)^\ell } + \O\left( \frac{ x }{ (\log x)^{N + 1/[K:\Q] - \varepsilon} } \right),
  \end{equation}
  where the implicit constant depends only on~\(K\),~\( N \) and~\(\varepsilon\).
\end{theorem}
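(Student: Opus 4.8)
The plan is to apply Theorem~\ref{th:main} with $a=1$ to the multiplicative function $f=\1_{\cN_K}$, and then to evaluate the resulting main term $M_f(x;1,h)$ by classical methods.

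We first check that $f\in\F_D(1)$ for a modulus $D$ depending only on $K$. Since $K/\Q$ is Galois, all prime ideals above an unramified rational prime $p$ share a common residue degree $f_p$, equal to the order of the Frobenius element of $p$ in $\operatorname{Gal}(K/\Q)$; hence $p^\nu\in\cN_K$ if and only if $f_p\mid\nu$, so that $f$ is multiplicative and $0\le f(n)\le 1$. By class field theory $f_p=1$, i.e.\ $p$ splits completely in $K$, if and only if $p$ lies in a fixed subgroup $H$ of $(\Z/\mathfrak f_K\Z)^\times$, where $\mathfrak f_K$ is the conductor of $K$; thus for unramified $p$ the value $f(p)=\1[p\text{ splits completely in }K]$ depends only on $p\bmod\mathfrak f_K$, while each of the finitely many ramified primes $p\mid\Delta_K$ is the only prime in its residue class modulo $D:=\Delta_K$. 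Therefore $f\in\F_{\Delta_K}(1)$, and Theorem~\ref{th:main} applied with $a=1$ and with $N$ replaced by any large fixed $N'$ gives, using $(1,h)=1$,
\[
  \sum_{\substack{|h|<n\le x\\ n\in\cN_K}}\tau(n-h)=M_f(x;1,h)+\O\!\left(\frac{x}{(\log x)^{N'}}\right)\qquad(1\le|h|\le x^\delta).
\]

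To evaluate $M_f(x;1,h)$ one needs the behaviour near $s=1$ of the twisted series $F_\chi(s):=\sum_n f(n)\chi(n)n^{-s}$ for primitive $\chi$ with $\cond(\chi)\mid\Delta_K$. Taking the logarithm of its Euler product and using the orthogonality relation $\1[p\text{ splits completely}]=\tfrac1{[K:\Q]}\sum_{\psi\in X_K}\psi(p)$ for unramified $p$ --- where $X_K$ is the group of Dirichlet characters associated with $K$ by class field theory, so that $\zeta_K(s)=\prod_{\psi\in X_K}L(s,\psi)$ --- one obtains
\[
  \log F_\chi(s)=\frac1{[K:\Q]}\sum_{\psi\in X_K}\log L(s,\chi\psi)+\O(1)\qquad(\Re s>\tfrac12).
\]
As $\chi\psi$ runs through the coset $\chi X_K$, this shows that $F_\chi$ equals $\zeta(s)^{1/[K:\Q]}$ times a factor holomorphic and non-vanishing throughout the classical zero-free region of the finitely many $L(s,\psi)$ involved precisely when $\chi\in X_K$, and that $F_\chi$ is holomorphic at $s=1$ when $\chi\notin X_K$; in the latter case $\sum_{n\le y}f(n)\chi(n)\ll_A y(\log y)^{-A}$ for every $A$. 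Hence only the $[K:\Q]$ characters $\chi\in X_K$ contribute to the main term of $M_f(x;1,h)$. For each of them one feeds the Selberg--Delange asymptotic expansion \cite[Chapter~II.5]{Ten15} for the inner sum $\sum_{q^2\le n\le x,\ (n,q)=(h,q)}f(n)\chi(n/(n,q))$ --- whose main term is of the form $x(\log x)^{1/[K:\Q]-1}$ times a polynomial in $1/\log x$, with coefficients depending mildly and multiplicatively on $q$ through the coprimality and $\gcd$ conditions, plus the contribution of the lower endpoint $n=q^2$ --- into $M_f$, and then sums over $q\le\sqrt x$ against the weights $\overline\chi\bigl(h/(h,q)\bigr)/\varphi\bigl(q/(h,q)\bigr)$, which is a routine mean-value estimate for a multiplicative function of $q$.

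Collecting the contributions yields the asymptotic expansion~\eqref{eq:estim-frob}, the constants $\kappa_{h,\ell}(K)$ being explicit combinations of Laurent and Taylor coefficients at $s=1$ of products of $\zeta(s)$ and of the $L(s,\psi)$, $\psi\in X_K$, together with arithmetic factors depending on $h$. The error contributed by Theorem~\ref{th:main} is negligible against the last retained term once $N'$ is chosen large, so the dominant error is the Selberg--Delange truncation error, of the claimed size $\O\bigl(x/(\log x)^{N+1/[K:\Q]-\varepsilon}\bigr)$; the range $1\le|h|\le x^\delta$ is inherited from Theorem~\ref{th:main}, possibly shrunk slightly so as to retain control --- polynomial in $|h|$ --- of the coefficients, and all implied constants depend only on $K$, $N$ and $\varepsilon$. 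The bulk of the work lies in this last step: carrying out the two nested Selberg--Delange analyses, first in $n$ and then in the modulus $q$, with full uniformity in the shift $h$, and pinning down the coefficients $\kappa_{h,\ell}(K)$; the genuinely new input, Theorem~\ref{th:main}, has already been supplied.
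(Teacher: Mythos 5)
Your proposal is correct and follows the same route as the paper: one checks that $\1_{\cN_K}$ is multiplicative and lies in $\F_{\Delta_K}(1)$ (the paper quotes Narkiewicz where you argue directly via class field theory, both giving $\sum_n b_K(n)n^{-s}=\zeta(s)^{1/[K:\Q]}H(s)$ with $H$ holomorphic and bounded near $s=1$), applies Theorem~\ref{th:main} with $a=1$, and evaluates $M_f(x;1,h)$ by the Selberg--Delange analysis already set up for $\tau_z$, the primitive characters of conductor dividing $\Delta_K$ that lie outside $X_K$ contributing negligibly. One remark: your own intermediate sizes (inner $n$-sums of order $x(\log x)^{1/[K:\Q]-1}$ against a $q$-sum of order $\log x$) produce a main term of order $x(\log x)^{1/[K:\Q]}$, which agrees with the displayed exponent $1-1/[K:\Q]$ only when $[K:\Q]\le 2$; this discrepancy sits in the statement \eqref{eq:estim-frob} itself (compare Theorem~\ref{cor:piltz} with $z=1/[K:\Q]$ and Corollary~\ref{cor:sums of two squares}), not in your argument, which matches the paper's.
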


An interesting special case is given by the extension~\( \Q(i) / \Q \).
In this case, \( \mathcal N_{ \Q(i) } \) is simply the set of integers which can be written as a sum of two squares, and Theorem~\ref{cor:norms of ideals} takes the following form.

\begin{corollary} \label{cor:sums of two squares}
  Let~\( \mathcal B \) be the set of all integers which can be written as a sum of two squares.
  Let~\( N \geq 1\) and~\( \varepsilon > 0 \).
  There exist a constant~\( \delta > 0 \) and real numbers~\( \beta_{h, \ell} \), such that, for~\( x \geq 2 \) and~\( 1 \leq |h| \leq x^\delta \),
  \begin{equation} \label{eq:estim-bn}
    \sum_{ \substack{ |h| < n \leq x \\ n \in \B } } \d(n - h) = x ( \log x )^\frac12 \sum_{\ell = 0}^N \frac{ \beta_{h, \ell} }{ (\log x)^\ell } + \O\left( \frac{ x }{ (\log x)^{N + 1/2 - \varepsilon} } \right).
  \end{equation}
  where the implicit constant depends only on~\( N \) and~\(\varepsilon\).
\end{corollary}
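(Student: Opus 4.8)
The plan is to obtain Corollary~\ref{cor:sums of two squares} as a direct specialization of Theorem~\ref{cor:norms of ideals} to the field $K = \Q(i)$; the only thing that genuinely requires argument is the set-theoretic identity $\B = \cN_{\Q(i)}$, and that identity is classical.

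First I would recall that $\Q(i)/\Q$ is an abelian (quadratic) extension with ring of integers $\Z[i]$ and discriminant $\Delta_{\Q(i)} = -4$, and that $\Z[i]$ is a principal ideal domain. Consequently, every nonzero integral ideal $\mathfrak a \subset \Z[i]$ is of the form $\mathfrak a = (a + bi)$ with $(a,b) \in \Z^2 \setminus \{(0,0)\}$, and then
\[ N(\mathfrak a) = N_{\Q(i)/\Q}(a + bi) = a^2 + b^2 ; \]
conversely, for any $(a,b) \in \Z^2 \setminus \{(0,0)\}$ the integer $a^2 + b^2$ is the norm of the nonzero ideal $(a + bi)$. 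Hence, as subsets of $\N$,
\[ \cN_{\Q(i)} = \bigl\{\, a^2 + b^2 \;:\; (a,b) \in \Z^2 \setminus \{(0,0)\} \,\bigr\} = \B . \]
Equivalently, one may invoke the two-squares theorem together with the splitting law in $\Q(i)$: an integer $n$ lies in $\B$ precisely when every prime $p \equiv 3 \bmod 4$ occurs in $n$ with even exponent, which is exactly the condition for $n$ to be a norm of an integral ideal of $\Z[i]$ (primes $p \equiv 1 \bmod 4$ split, primes $p \equiv 3 \bmod 4$ are inert, and $2$ ramifies).

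Granting this identification, I would apply Theorem~\ref{cor:norms of ideals} with $K = \Q(i)$. Since $[K:\Q] = 2$, we have $1 - 1/[K:\Q] = \tfrac12$ and $N + 1/[K:\Q] = N + \tfrac12$, so~\eqref{eq:estim-frob} becomes precisely~\eqref{eq:estim-bn} with $\beta_{h,\ell} := \kappa_{h,\ell}(\Q(i)) \in \R$; moreover, as $K$ is now a fixed field, the constant $\delta$ and the implied constant, which in Theorem~\ref{cor:norms of ideals} depend on $K$, $N$ and $\varepsilon$, depend here only on $N$ and $\varepsilon$, as required. I do not expect any real obstacle in this deduction: the entire content of the estimate is already carried by Theorems~\ref{th:main} and~\ref{cor:norms of ideals}, and the passage to sums of two squares is the elementary identity $\B = \cN_{\Q(i)}$ recalled above.
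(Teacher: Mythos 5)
Your proposal is correct and matches the paper's own treatment: the paper derives Corollary~\ref{cor:sums of two squares} precisely as the specialization of Theorem~\ref{cor:norms of ideals} to $K=\Q(i)$, using the classical identification $\B=\cN_{\Q(i)}$ (via the two-squares theorem or the fact that $\Z[i]$ is a PID), with the exponents $1-1/[K:\Q]=\tfrac12$ and $N+1/[K:\Q]=N+\tfrac12$ falling out exactly as you state. Nothing further is needed.
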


The first term in the asymptotic formula for the left-hand side of~\eqref{eq:estim-bn} can also be obtained using a recent extension of the Bombieri-Vinogradov theorem due to Granville and Shao~\cite{GranvilleShao18}, along with the Brun-Titchmarsh inequality.
The coefficients~\( \kappa_{h, \ell}(K) \) and~\( \beta_{h, \ell} \) can be computed explicitly; see \eqref{eq:first-coeff-bn} \textit{infra} for an evaluation of the leading coefficient~$\beta_{h,0}$ in~\eqref{eq:estim-bn}.
Note that, since the indicator function~\( b(n) \) of the set~\( \mathcal B \) correlates with both the principal and the non-principal character mod~\(4\), there are two genuine contributions on the right-hand side in~\eqref{eq: main theorem shift 1} when~\( f(n) = b(n) \).
This also explains the discrepancy between the conjectures made in~\cite{Iwa76} and~\cite{FKR} on autocorrelations of~$b(n)$.

We stress that the multiplicity of representations as ideal norms in Theorem~\ref{cor:sums of two squares} is \emph{not} taken into account.
Thus the estimate~\eqref{eq:estim-bn} is more difficult to obtain than an estimate for the correlation sum
\[ \sum_{ |h| < n \leq x } r_2(n) \d(n - h) \quad \text{with} \quad r_2(n) := \abs{\{(r,s)\in\Z^2 : r^2+s^2=n\}}, \]
for which classical methods suffice.

\subsection{Integers with \texorpdfstring{\(k\)}{k}~prime divisors} \label{subsection: titchmarsh divisor problem}

The Titchmarsh divisor problem, posed in 1930~\cite{Tit30}, asks for an asymptotic evaluation of the sum
\begin{equation} \label{eq: titchmarsh sum}
  \sum_{ |h| < p \leq x } \d(p - h),
\end{equation}
where \(p\) runs over all primes up to~\(x\).
Following the initial works by Titchmarsh~\cite{Tit30} and Linnik~\cite{Linnik}, the best known result was obtained independently by Fouvry~\cite{Fou85} and Bombieri, Friedlander and Iwaniec~\cite{BFI86}: For any fixed~\( N > 0 \), we have, for~\( 1 \leq |h| \leq (\log x)^N \),
\begin{equation} \label{eq: best result for titchmarsh}
  \sum_{ |h| < p \leq x } \d(p - h) = C_h x + C_h' \li(x) + \O\left( \frac x{ (\log x)^N } \right),
\end{equation}
where
\[ C_h = \frac{\zeta(2) \zeta(3)}{\zeta(6)} \prod_{p \mid h} \left( 1 - \frac p{p^2 - p + 1} \right), \quad C_h' = \bigg( \gamma - \sum_p \frac{\log p}{p^2 - p + 1} + \sum_{p \mid h} \frac{ p^2 \log p }{ (p - 1) (p^2 - p + 1) } \bigg) C_h. \]
An interesting generalization of this problem concerns the sum
\begin{equation} \label{eq: titchmarsh for general k}
  \sum_{ \substack{ |h| < n \leq x \\ \omega(n) = k } } \d(n - h),
\end{equation}
where \( \omega(n) \) denotes the number of distinct prime divisors of an integer~\(n\).
An asymptotic equivalence for this sum was proven by Khripunova~\cite[Theorem~3]{Khr98}, uniformly for~\( k \ll \log\log x \) and~\( h \ll x \).

Our methods allow to obtain a full asymptotic expansion for~\eqref{eq: titchmarsh for general k}, at least for small shifts~\(h\).
In order to circumvent the obstacle that the indicator function for integers~\(n\) with~\( \omega(n) = k \) is not multiplicative, we use a classical method due to Selberg~\cite{Sel54}, which allows us to reduce the evaluation of~\eqref{eq: titchmarsh for general k} to the evaluation of the correlation sum of the divisor function with the multiplicative function~\( n \mapsto z^{ \omega(n) } \).
This eventually leads to the following result.

\begin{theorem} \label{cor:titchmarsh}
  Let~\( N \geq 1 \) and~\( \varepsilon > 0 \).
  There exist a constant~\( \delta > 0 \) and polynomials~\( P_{h, \ell}^k(X) \) of degree~\(k - 1\) such that, for~\( 1 \leq k \ll \log\log x \) and~\( |h| \leq x^\delta \),
  \begin{equation} \label{eq: asymptotic expansion for general k}
    \sum_{ \substack{ |h| < n \leq x \\ \omega(n) = k } } \d(n - h) = x \sum_{ \substack{ 0 \leq \ell \leq N } } \frac{ P_{h, \ell}^k(\log\log x) }{ (\log x)^\ell } + \O\left( \frac{ x (\log\log x)^k }{ k! (\log x)^{N + 1 - \varepsilon} } \right),
  \end{equation}
  where the implicit constants depend only on~\(N\) and~\( \varepsilon \).
\end{theorem}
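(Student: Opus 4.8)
The plan is to combine Theorem~\ref{th:main} (in the case $a=1$, $D=1$) with the classical device of Selberg~\cite{Sel54} for passing from the multiplicative function $n\mapsto z^{\omega(n)}$ to the non-multiplicative indicator $\1_{\omega(n)=k}$. For any $\rho>0$ one has the exact identity $\1_{\omega(n)=k}=\frac1{2\pi i}\oint_{|z|=\rho}z^{\omega(n)-k-1}\,\dd z$; multiplying by $\d(n-h)$ and summing, we get, with $G(x;z):=\sum_{|h|<n\le x}z^{\omega(n)}\d(n-h)$,
\begin{equation*}
  \sum_{\substack{|h|<n\le x\\\omega(n)=k}}\d(n-h)=\frac1{2\pi i}\oint_{|z|=\rho}\frac{G(x;z)}{z^{k+1}}\,\dd z .
\end{equation*}
Now $n\mapsto z^{\omega(n)}$ is multiplicative, equal to $z$ at every prime (hence periodic over the primes with trivial period $D=1$), and satisfies $|z^{\omega(n)}|=|z|^{\omega(n)}\le A^{\omega(n)}\le\d_A(n)$ for every integer $A\ge\max(1,|z|)$; thus $z^{\omega(\cdot)}\in\mathcal F_1(A)$ and Theorem~\ref{th:main} applies uniformly for $|z|\le A$. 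I would fix an integer $A$ large enough that the radius $\rho=k/\log\log x$ used below stays in $|z|\le A$ for all $k$ in the range $1\le k\ll\log\log x$. Since $(1,h)=1$, Theorem~\ref{th:main} with $a=1$ (and with its error-term parameter taken to be $N+1$) gives, for $|h|\le x^{\delta}$ with $\delta$ as in that theorem,
\begin{equation*}
  G(x;z)=M_{z^{\omega}}(x;h,1)+\O\!\left(\frac{x}{(\log x)^{N+1}}\right),
\end{equation*}
the implied constant depending only on $A$ and $N$, hence uniform on the circle.

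The next step is to evaluate $M_{z^{\omega}}(x;h,1)$ asymptotically by the Selberg--Delange method~\cite[Chapter~II.5]{Ten15}, as announced in the remarks following Theorem~\ref{th:main}. Unfolding the definition of $M_{z^{\omega}}$ (only the trivial character occurs, as $D=1$), one is reduced to summing over $q\le\sqrt x$ the partial sums of $z^{\omega(\cdot)}$ restricted by the coprimality/congruence conditions attached to $q$ and $h$; the relevant Dirichlet series factors as $\zeta(s)^{z}H_{q,h}(s;z)$, where $H_{q,h}(s;z)$ is holomorphic and uniformly bounded in the region $\Re(s)\ge\tfrac12+\delta$, $|z|\le A$, and is holomorphic in $z$. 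This produces an expansion
\begin{equation*}
  G(x;z)=x(\log x)^{z}\sum_{\ell=0}^{N}\frac{\mu_{h,\ell}(z)}{(\log x)^{\ell}}+\O\!\left(\frac{x(\log x)^{\Re(z)}}{(\log x)^{N+1-\varepsilon}}+\frac{x}{(\log x)^{N+1}}\right),
\end{equation*}
valid uniformly for $|z|\le A$, with each $\mu_{h,\ell}$ holomorphic on $|z|\le A$ and depending on $h$ only through Euler factors at the primes dividing $h$. The key structural point is that $\mu_{h,\ell}(0)=0$ for every $\ell$: indeed every $n$ with $|h|<n\le x$ has $\omega(n)\ge1$, so $G(x;0)=0$ identically; hence $x\sum_{\ell}\mu_{h,\ell}(0)(\log x)^{-\ell}=\O(x(\log x)^{-N-1})$, and comparing powers of $\log x$ forces $\mu_{h,\ell}(0)=0$ for all $\ell$ (equivalently: at $z=0$ the Euler product above is $\zeta(s)^{0}H_{q,h}(s;0)$, which has no pole at $s=1$).

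It remains to substitute this expansion into the contour integral. The contribution of the main term is, by Cauchy's formula,
\begin{equation*}
  x\sum_{\ell=0}^{N}\frac1{(\log x)^{\ell}}\,[z^{k}]\!\left((\log x)^{z}\mu_{h,\ell}(z)\right)
  =x\sum_{\ell=0}^{N}\frac1{(\log x)^{\ell}}\sum_{0\le j\le k-1}\frac{(\log\log x)^{j}}{j!}\,[z^{k-j}]\mu_{h,\ell}(z),
\end{equation*}
where the inner sum runs only up to $j=k-1$ precisely because $[z^{0}]\mu_{h,\ell}=\mu_{h,\ell}(0)=0$; hence this equals $x\sum_{\ell=0}^{N}(\log x)^{-\ell}P_{h,\ell}^{k}(\log\log x)$ with $P_{h,\ell}^{k}$ a polynomial of degree at most $k-1$, which is the main term in~\eqref{eq: asymptotic expansion for general k}. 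For the error term, writing $E(x;z)$ for the two $\O$-terms above, we have $E(x;z)\ll x(\log x)^{\Re(z)-N-1+\varepsilon}+x(\log x)^{-N-1}$ on $|z|\le A$, so
\begin{equation*}
  \Bigl|\frac1{2\pi i}\oint_{|z|=\rho}\frac{E(x;z)}{z^{k+1}}\,\dd z\Bigr|\ll\rho^{-k}\max_{|z|=\rho}|E(x;z)|\ll x\,\rho^{-k}(\log x)^{\rho-N-1+\varepsilon}+x\,\rho^{-k}(\log x)^{-N-1}.
\end{equation*}
Choosing $\rho=k/\log\log x$ (which is $\le A$ for $1\le k\ll\log\log x$), and using $\rho^{-k}=(\log\log x/k)^{k}\le(\log\log x)^{k}/k!$ (as $k^{k}\ge k!$) together with $\rho^{-k}(\log x)^{\rho}=(e\log\log x/k)^{k}\ll\sqrt{k}\,(\log\log x)^{k}/k!$ (Stirling), the right-hand side is $\ll x(\log\log x)^{k}\bigl(k!\,(\log x)^{N+1-\varepsilon}\bigr)^{-1}$ after absorbing $\sqrt{k}\ll(\log x)^{\varepsilon}$; this gives the error term in~\eqref{eq: asymptotic expansion for general k} (up to renaming $\varepsilon$). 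The reduction to $\omega(n)=k$ via Selberg's method, being via an exact identity, automatically produces the uniform dependence on $k$ asserted in the theorem.

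The main obstacle, beyond the input of Theorem~\ref{th:main}, lies in the Selberg--Delange step: one must establish the asymptotic expansion of $M_{z^{\omega}}(x;h,1)$ with \emph{full uniformity} in $z$ on a fixed disk (so that integration over $|z|=\rho$ is legitimate), with holomorphic coefficients $\mu_{h,\ell}(z)$, controlled $h$-dependence, and an error scaling as $(\log x)^{\Re(z)}$. Once this is in place, the remaining manipulations are the classical Sathe--Selberg analysis of a weighted count of integers with $k$ prime factors; the only genuinely new feature is the vanishing $\mu_{h,\ell}(0)=0$, which is exactly what makes the polynomials $P_{h,\ell}^{k}$ have degree $k-1$ rather than $k$ and makes the radius choice $\rho=k/\log\log x$ yield an error of the stated order uniformly in the whole range $1\le k\ll\log\log x$.
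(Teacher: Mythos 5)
Your proposal is correct and follows essentially the same route as the paper: Selberg's device reducing $\1_{\omega(n)=k}$ to the coefficients of $z\mapsto\sum_{|h|<n\le x}z^{\omega(n)}\d(n-h)$, Theorem~\ref{th:main} applied to $z^{\omega(\cdot)}\in\F_1(A)$ uniformly on a disk, and a Selberg--Delange evaluation of the main term giving $x(\log x)^z\sum_\ell \gamma_{h,\ell}(z)(\log x)^{-\ell}$ plus a uniform error. The only cosmetic difference is that you carry out the coefficient extraction by hand (Cauchy's formula on $|z|=k/\log\log x$ with the Stirling estimate and the vanishing at $z=0$), which is precisely the content of the reference \cite[Theorem~II.6.3]{Ten15} that the paper invokes at this point.
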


The case~\( k = 1 \) recovers the best-known asymptotic formula~\eqref{eq: best result for titchmarsh} for the Titchmarsh divisor problem.
As before, the polynomials~\( P_{h, \ell}^k \) can be computed explicitly; in particular, the leading coefficient in the asymptotic expansion is given by~\( C_h / (k - 1)! \).

This result is non-trivial throughout the range~\( k \ll \log\log x \).
The case~$k/\log \log x \to +\infty$ is an interesting question which would require different tools, due to the sparsity of the set of integers under consideration (not unlike the situation for friable integers~\cite{HarperBV2012}). We do not address this here.

\subsection{Overview of the proof of Theorem~\ref{th:main}} \label{sec:overview}

For the sake of clear exposition, we will focus here on the case~\( D = 1 \), as our arguments extend without much difficulty to the case of general moduli and the arising complications are mainly of technical nature. Note that any~\( f \in \mathcal F_1(A) \) can be approximated (in the convolution sense) by a suitable generalized divisor function, so that it suffices to consider the case~\( f = \d_z \) with~\( z \in \C \).

We will give two distinct proofs of Theorem~\ref{th:main}.
They are based on two different kinds of combinatorial identities for the generalized divisor function~\( \d_z \), both of which we believe are of independent interest.
Our first approach relies on an effective combinatorial formula of Heath-Brown's type for the divisor function~$\tau_\alpha$ with~$\alpha\in\Q$, and an interpolation argument in the~$z$-variable for weighted mean values of~$\tau_z$.
Our second apprach, which is more direct and avoids the interpolation step, is instead based on an identity of Linnik type for~$\d_z$ and the well-factorability of friable numbers\footnote{The second proof was found only after a preliminary version of the present manuscript was uploaded online.}.

\subsubsection{Proof by Heath-Brown's identity and interpolation}

Our first proof of Theorem~\ref{th:main} divides into two parts:
We first prove the theorem for rational~\(z\), and then extend this result to all~\( z \in \C \).

For~\( z \in \Q \), the general structure of the proof of Theorem~\ref{th:main} follows the setup of~\cite{Fou85, BFI86} (see also~\cite{Fou84}).
The strategy naturally splits into two steps:
\begin{enumerate}
  \item We decompose the function~$f$ into convolutions with either large smooth components (type I) or suitably localized components (type II).
  \item We solve the question for both types of sums.
\end{enumerate}

The bulk of the present work concerns the first step.
Combinatorial decompositions for prime numbers have a long history since the works of Vinogradov~\cite{Vin37} (we refer to the survey~\cite{Ram13} for an account and further references).
Yet, it was not until recently that analogous identities emerged for generalized divisor functions.
Montgomery and Vaughan (private communication) have recently developed a combinatorial identity of Vaughan's type~\cite{Vau75} for~$\d_{1/2}$, which initially motivated largely the present work.
Unfortunately, as for primes, the bilinear sums coming from a raw application of this identity are not quite localized enough to be effective for Titchmarsh's problem, and even though this can sometimes be fixed by iterating the formula~\cite{FouvryDE}, our early attempts were unsuccessful.
Instead we follow the more flexible approach of Heath-Brown~\cite{HB} (which is related to~\cite{Gall68}).

Our first result (Theorem~\ref{th:combinatorial-identity} below) is a uniform combinatorial formula of Heath-Brown's type for the divisor function~$\d_{\frac uv}$ with~$u/v\in\Q$. In the simplest case~$0 < u < v$, it reads
\begin{equation}
\d_\frac uv(n) = \sum_{\ell = 1}^K c_{\ell, K, u/v} \mathop{ \sum \cdots \sum }_{ \substack{ m_1 \cdots m_\ell n_1 \cdots n_{\ell v - u} = n \\ n_1, \ldots, n_{\ell v - u} \leq x^{1/K} } } \d_{-\frac 1v}(n_1) \cdots \d_{-\frac 1v}( n_{\ell v - u} ) \quad \text{for} \quad n \leq x,\label{eq:prototype-combinatorial-formula}
\end{equation}
where~$K\in\N_{>0}$ is arbitrary and where~$c_{\ell,K,u/v} \in \Q$.
A more general formula holds for any rational number~$u/v$ (see Theorem~\ref{th:combinatorial-identity}).
A crucial property of this formula is that it is sensitive almost only to the archimedean size of~$u/v$.
Indeed, for~$\abs{u/v}\leq A$, the coefficients~$c_{\ell,K,u/v}$, the length of the~$\ell$-sum and the value at primes~$n=p$ of each $\ell$-summand on the right-hand side are bounded in terms of~$A$ and~$K$ only (but not of~$v$).
Thus, the only loss due to the size of~$v$ comes from the number~$\O(v)$ of terms in the convolution, which has essentially no effect on what follows.

In the same way, we can express any rational convolution power~$\Conv^{u/v}f$ of a multiplicative function in terms of higher convolutions~$\Conv^k f$ with~$1\leq k \leq K$ and a bilinear term with one component supported on the interval~$[x^\ee, x^{1/K}]$.
However, to our knowledge asymptotic formulae for the correlation sums
\begin{equation} \label{eq:convol-deg-2}
  \sum_{n\leq x} ( {\Conv}^k f )(n) \d(n + 1),
\end{equation}
for~\( k \geq 2 \) are currently known for only very few functions~\(f\) (essentially constant functions and Dirichlet characters).
This is the main obstacle towards using decompositions of this form to prove Theorem~\ref{th:main} for complex-fold convolutions of multiplicative functions.

Regarding the second step, we are mostly able to use the harmonic analysis arguments underlying~\cite{Fou85, BFI86}. They are based on bounds on Kloosterman sums on average~\cite{DI-Kloo}, along with Voronoi summation (for type I) and Linnik's dispersion method (for type II).
We will follow the treatment made in~\cite{D-disp,T-dk}, although some work is needed in order to cast the main terms from these works in a form suitable for us.

Eventually, the arguments described above yield a proof of Theorem~\ref{th:main} for~$f = \d_{\frac uv}$ uniformly in the range~$v \leq (\log x)^N$.
As it turns out, this is already sufficient information to be able to conclude.

To see why, we return to the correlation sum
$$ D(z) := \sum_{|h|/a<n\leq x} \d_z(n) \d(an-h) $$
with~$z\in\C$, $\abs{z}\ll 1$.
The main observation is that this expression is a polynomial in~\( z \), and that we know how to evaluate it on rational numbers with small denominators.
Even though \( D(z) \) initially has degree of the order of~\( \log x \), we can use large deviation bounds on the function~$\omega(n)$ (and a convolution argument) to approximate it, up to an admissible error, by the polynomial 
$$ {\tilde D}(z) := \ssum{|h|/a<n\leq x \\ \omega(n) \ll \log\log x} z^{\omega(n)} \d(an-h), $$
which has degree at most~$\O(\log\log x)$.
This enables us to use Lagrange interpolation on a suitably chosen set of rational sample points to transfer our estimates for~\( z \in \Q \) to estimates of the same quality for~\( z \in \C \).
Indeed, this process introduces an error which grows exponentially in the degree of the polynomial.
As our estimates for~\( D(z) \) for~\( z \in \Q \) save an arbitrarily large power of~\( \log x \), we are still able to obtain an asymptotic formula at the end.

Note that for the above arguments to work it is crucial that estimates with a saving of a large power of~$\log x$ for~$D(z)$ for~\( z \in \Q \) are available, which we can fortunately obtain here from the Siegel-Walfisz bound (an unfortunate consequence of the last fact, however, is that most of our results are not effective).

We mention that, as in Heath-Brown's work~\cite{HB}, the arguments sketched above can be used to obtain asymptotic formulae for short sums
$$ \sum_{x<n\leq x+y} f(n) $$
for~$y \geq x^{7/12 + \ee}$ and~$f\in \F_D(A)$, as well as theorems of Bombieri-Vinogradov type.
However, unlike Titchmarsh's divisor problem, such results could in principle also be obtained by zero-density estimates for Dirichlet $L$-functions (see~\cite[Chapter~10.5]{IK04}, \cite{Bombieri-BV}).

\subsubsection{Proof by Linnik's identity}

Our second proof uses a different decomposition for~\( \d_z \), which has the major advantage that it holds uniformly for all \( z \) in a fixed bounded subset of~\( \C \).
This avoids the interpolation step necessairy in the first proof, although the resulting combinatorial identity is not as elegant as the identity of Heath-Brown's type described above.

A naive attempt to find a combinatorial formula for~$\d_z$ which is uniform in~$z$ might start with Linnik's formula~\cite[\S 13.3]{IK04}, which relies on the Taylor series expansion
$$ \zeta(s)^z = (1+(\zeta(s)-1))^z = \sum_{j\geq 0} \binom{z}{j} (\zeta(s)-1)^j. $$
The main technical difficulty at this point is to truncate the sum over~$j$. In the context of Linnik's formula, this truncation is performed by restricting to almost-primes from the outset (or inserting a sieve weight), see~\cite[p.21]{Linnik}, but unfortunately this approach is not available in our situation.

Instead we write \( \zeta(s) = \zeta_y(s) M_y(s) \), where
\[ \zeta_y(s) := \prod_{ p \leq y } \left( 1 - \frac1{p^s} \right)^{-1} \quad\text{and} \quad M_y(s) := \frac{ \zeta(s) }{ \zeta_y(s) }, \]
with~\( y = x^{1/K} \) for some~\( K \in \N \), and then apply the Taylor series expansion only on the second factor~\( M_y(s) \), so that
$$ \zeta(s)^z = \zeta_y(s)^z \sum_{j\geq 0} \binom{z}{j} (M_y(s)-1)^j. $$
This expression has the advantage that the \(j\)-th summand has no coefficient for~\( n \leq y^j \) in its Dirichlet series expansion.
After expanding and comparing the Dirichlet coefficients on both sides, we are therefore led to the following ``raw'' combinatorial decomposition (see Theorem~\ref{th:combinatorial-identity-linnik}),
\[ \tau_z(n) = \sum_{ 0 \leq \ell < K } c_\ell \sum_{ \substack{ n = n_1 n_2 \\ \text{\(n_1\) is \(y\)-friable} } } \tau_{z - \ell}(n_1) \tau_\ell(n_2) \quad \text{for} \quad n \leq x, \]
where the \(c_\ell\) are some complex numbers which depend on~\(z\), but which can be bound uniformly for~\( z \ll 1 \) (we recall that an integer is said to be \(y\)-friable if all of its prime factors are bounded by~\(y\)).

In order to apply this formula, it is of course necessairy to be able to control the factors~\( \tau_{z - \ell}(n_1) \).
However, the characteristic function of~$y$-friable numbers has good factorability properties (see~\cite[p.66]{Vaughan1989} or~\cite[Lemme 3.1]{FouvryTenenbaum1996}): we can essentially replace them in the formula above by convolutions of sequences supported on~\( [1, y] \) (see Lemma~\ref{lem:friable-well-fact}).
This in turn enables us to apply estimates of type~I and type~II, leading eventually to the desired asymptotic formula.

\subsection*{Plan}

In Section~\ref{section: reduction to F' and F''}, we introduce our main notations and the subsets of functions of~$\F_D(A)$ we will mainly work with.
In Section~\ref{section: combinatorial identities}, we present the combinatorial decompositions for~$\d_z$, on which our proofs are based.
In Section~\ref{section: auxiliary estimates}, we state some auxiliary computations in order to use the results of~\cite{T-dk,D-disp}.
In Sections~\ref{section:The case of rational parameters} and~\ref{section: interpolation to complex parameters}, we proof Theorem~\ref{th:main} using the combinatorial identity of Heath-Brown's type, first by treating the case of rational parameters, and then by interpolating the obtained results to all functions in~$\F_D(A)$.
In Section~\ref{section: proof using Linnik's identity}, we sketch an alternative proof using the combinatorial identity of Linnik's type.
Finally, in Section~\ref{section:proof of main theorem}, we estimate the main terms and prove Theorems~\ref{cor:piltz}, \ref{cor:norms of ideals} and~\ref{cor:titchmarsh}.

\section{First reductions} \label{section: reduction to F' and F''}

\subsection{Statement of the main proposition}

For $n, h \in \Z$ with~$n\geq 1$ and~$n-h\geq 1$, let
\begin{equation} \label{eq:def-dthR}
  \dt_h(n; R) := 2\ssum{q\leq \sqrt{n-h} \\ (n, q) = (h, q)} \frac1{\vphi(\tfrac q{(h, q)})} \ssum{\chi\mod{q/(h, q)} \\ \cond(\chi) \leq R} \chi(\tfrac h{(h, q)}) \overline{\chi(\tfrac{n}{(h, q)})}.
\end{equation}
Note that $\dt_h(n;R) = \d(n-h)$ if~$R>\sqrt{n-h}$ and $n-h$ is not a perfect square.
We will eventually choose~$R$ of size~$(\log n)^{O(1)}$. We have a trivial bound
\begin{equation}
\dt_h(n; R) \ll_\ee  n^\ee R^{1+\ee}.
\end{equation}

The function~\( \dt_h(n; R) \) should be thought of as an approximation to~\( \d(n - h) \) on average.
The main work in proving Theorem~\ref{th:main} consists in showing that, for any~\( f \in \mathcal F_D(A) \), we have
\begin{equation} \label{eq: main result in rought terms}
  \sum_{n \leq x} f(n) \d(an - h) \sim \sum_{n \leq x} f(n) \dt_h( an; R_x ) \quad \text{for} \quad x \to \infty,
\end{equation}
where \(R_x\) is some slowly growing function in~\(x\) (some appropriate power of~\( \log x \)).
Once this is established, we can evaluate the sum on the right by standard methods.
In view of this, it is convenient to define
\[ \Delta_h(n;R) := \d(n-h) - \dt_h(n;R) \quad \text{and} \quad \Sigma_f(I; a, h; R) := \sum_{n \in I} f(n) \Delta_h(an; R), \]
for any interval \(I \subseteq \R^+ \).
The main part of this article is concerned with proving the following proposition, which puts the statement~\eqref{eq: main result in rought terms} into precise terms, and from which the results described in the introduction can be deduced easily (see Section~\ref{section:proof of main theorem}).

\begin{proposition} \label{prop:bound-largechars}
  Let \( A, D \geq 1 \) be fixed.
  Then we have, for~\( x \geq 3 \), \( I \subset [ x/2, x ] \) an interval and \( f \in \mathcal F_D(A) \), the following estimate,
  \begin{equation} \label{eq:main bound}
    | \Sigma_f(I; a, h; R) | \leq C \d((a,h)) \frac{ x (\log x)^B }{ R^{1/3} } \quad \text{for} \quad 1 \leq a, |h|, R \leq x^\delta,
  \end{equation}
  where \( \delta > 0 \) is some absolute constant and where \( B, C > 0 \) are constants which depend only on~\(A\) and~\(D\).
\end{proposition}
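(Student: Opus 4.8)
The plan is to reduce Proposition~\ref{prop:bound-largechars} to a combination of type~I and type~II estimates, using one of the combinatorial decompositions announced in the overview (say the Heath-Brown-type identity~\eqref{eq:prototype-combinatorial-formula} for~$\tau_{u/v}$). First I would observe that by the standard approximation procedure sketched in Section~\ref{sec:overview} — writing a general $f\in\F_D(A)$ as a short Dirichlet convolution involving a generalized divisor function and controlling the tail via large-deviation bounds on $\omega(n)$ — it suffices to prove the bound~\eqref{eq:main bound} for $f=\tau_z$, and in fact (after the interpolation argument, or directly via Linnik's identity) for $f=\tau_{u/v}$ with $v\le (\log x)^N$. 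Substituting the combinatorial formula~\eqref{eq:prototype-combinatorial-formula} into $\Sigma_f(I;a,h;R)$ turns it into a bounded number ($O_A(K)$ many) of multilinear sums in which some variables ($n_1,\dots,n_{\ell v-u}$) are constrained to $[1,x^{1/K}]$ and the remaining variables $m_1,\dots,m_\ell$ are free. Grouping variables greedily, every such multilinear sum can be written as $\sum_{d\le D_1}\sum_{e} g(d) \Delta_h(ade;R)$ where either $d$ ranges over a long interval with $g$ smooth-ish (type~I: one long, smooth variable), or the sum genuinely factors as a bilinear form $\sum_{d\sim D_1}\sum_{e\sim D_2} \alpha_d\beta_e\Delta_h(ade;R)$ with $D_1,D_2$ both bounded away from $1$ and from $x$ (type~II); the key point, stressed after~\eqref{eq:prototype-combinatorial-formula}, is that the number of factors and the coefficients are controlled in terms of $A,K$ only, so the loss in $v$ is harmless.

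The second step is to invoke the type~I and type~II inputs. For the type~I sums, the mechanism is Voronoi summation applied to $\tau(an-h)$ over the long variable, which replaces $\Delta_h$ by a dual sum that is then estimated trivially or via elementary bounds; this is exactly the kind of estimate worked out in~\cite{D-disp,T-dk}, and I would cite those, taking care (as the overview warns) to match the main term there against $\dt_h(an;R)$. For the type~II sums, the tool is Linnik's dispersion method: one opens the square, and is led to sums of Kloosterman sums which are handled on average by the Deshouillers--Iwaniec bounds~\cite{DI-Kloo}. The output of both is a saving of a fixed positive power of $\log x$ (indeed of $R$) provided the ranges $D_1,D_2$ avoid the extremes — precisely where the exponent $R^{1/3}$ and the power $(\log x)^B$ in~\eqref{eq:main bound} come from, after optimizing $K$ as a function of $R$ and balancing the type~I and type~II ranges. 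The factor $\tau((a,h))$ tracks the gcd losses when $a$ and $h$ share factors with the summation variables, and $x\in[x/2,x]$, $1\le a,|h|,R\le x^\delta$ keep all auxiliary parameters under the threshold needed for the Kloosterman-sum estimates (cf.\ the uniformity issue noted after Theorem~\ref{th:main}).

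The main obstacle — and where the bulk of the work lies — is the \emph{combinatorial bookkeeping} that converts the raw identity into a clean dichotomy between admissible type~I and type~II shapes with \emph{uniform} control of all coefficients and ranges. One has to make sure that after substituting~\eqref{eq:prototype-combinatorial-formula}, every resulting sum either has a variable (or product of variables) long enough for Voronoi to beat the diagonal, or splits into two blocks each of length in a window like $[x^{\varepsilon}, x^{1/2-\varepsilon}]$ suitable for the dispersion method; the constraint $n_i\le x^{1/K}$ is what makes this possible, but verifying it in all combinatorial cases, and checking that the number of cases and the sizes of the $c_{\ell,K,u/v}$ stay bounded in terms of $A$ and $K$ only (not $v$), requires care. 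A secondary technical point is the need to re-derive or re-package the main terms of~\cite{D-disp,T-dk} so that their sum telescopes exactly into $\dt_h(an;R)$ as defined in~\eqref{eq:def-dthR}; this is routine but fiddly. Once the decomposition and the matching of main terms are in place, the analytic estimates are essentially off-the-shelf, and choosing $K \asymp \log R$ (or a suitable power) yields the stated $R^{-1/3}$ saving.
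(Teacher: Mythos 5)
Your overall architecture (reduce to generalized divisor functions, insert a combinatorial identity, feed the resulting multilinear sums into type~I/type~II machinery, and match main terms against $\dt_h$) is the same as the paper's, but there is a genuine gap in the central combinatorial dichotomy. You claim that, thanks to the constraint $n_i\le x^{1/K}$, every resulting sum is either of type~I (one smooth variable long enough for Voronoi) or of type~II with blocks in a window ``like $[x^{\varepsilon},x^{1/2-\varepsilon}]$''. For this problem the dispersion/Deshouillers--Iwaniec input does not cover special variables anywhere near $x^{1/2}$: the paper's Lemma~\ref{lem:equidistrib-bilin} requires the localized factor to lie in $[X^{\eta},X^{1/3-\eta}]$, and the introduction stresses that bilinear sums with blocks near $x^{1/2}$ (as produced by a raw Vaughan-type identity) are precisely what cannot be handled in Titchmarsh-type problems. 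Consequently the configuration in which two or three of the smooth variables $m_i$ sit near $x^{1/3}$ --- so that no single variable exceeds $x^{1/3+\eta}$ and no sub-product can be localized in $[x^{\eta},x^{1/3-\eta}]$ --- falls through your dichotomy. The paper fills exactly this hole with a third ingredient, the ``type $\tau_2$'' estimate of Lemma~\ref{lem:Delta for large M_1, M_2} (two medium smooth variables, proved by adapting the $\tau_2\times\tau_2$ shifted-convolution machinery of~\cite{Top17a}, i.e.\ Kuznetsov/spectral input, not plain Voronoi or dispersion); this is Case~II of Section~\ref{section:The case of rational parameters}. Your proposal omits this ingredient, and without it the proof does not close.

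A secondary inaccuracy: the exponent $R^{-1/3}$ does not come from ``optimizing $K$ as a function of $R$ and balancing the type~I and type~II ranges''. The paper keeps $K=4$ fixed in the Heath-Brown identity and obtains a saving of $R^{-1/2}$ for rational parameters (Proposition~\ref{prop:bound-largechars-rational}); the degradation to $R^{-1/3}$ happens in the interpolation step of Section~\ref{section: interpolation to complex parameters}, where the polynomial is truncated at degree $L\asymp\log R$ (via the second-moment bound of Lemma~\ref{lem:rough-bound-sum-d-dt}) and Lagrange interpolation at rational sample points costs a factor of size $(4e)^{2DL}$ that must be absorbed by the rational-case saving. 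Since you invoke ``the interpolation argument'' only as a black box reducing to denominators $v\le(\log x)^N$, you never account for this loss; you also conflate the large-deviation truncation in $\omega(n)$ (which belongs to the interpolation step) with the convolution approximation of Lemmas~\ref{lemma:replacing f by g} and~\ref{lemma: existence of g_1 and g_2}, which is what reduces general $f\in\F_D(A)$ to $\F_D^{\d}(A)$ or $\F_D^{\omega}(A)$ in the first place.
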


\subsection{Restricting the set of functions}

It is known in multiplicative number theory that, to a certain degree of precision, the magnitude of the mean value of a multiplicative function~$f$ depends mostly on the values~$f(p)$, $p$ prime. The following lemma quantifies the analogous phenomenon in our case.

\begin{lemma} \label{lemma:replacing f by g}
  Let \( f, g : \N \to \C \) be multiplicative functions, which satisfy the following conditions,
  \begin{enumerate}[label=(\roman{enumi})]
    \item \( \displaystyle | g(n) | \leq \d_M(n) \quad \text{for some }M\geq 1 \text{ and all } n \in \N, \) \label{enum: first condition}
    \item \( \displaystyle H := \sum_{n\geq 1} \frac{\abs{(f\ast g^{-1})(n)}}{n^\sigma} < +\infty \) for some~$\sigma<1$. \label{enum: second condition}
  \end{enumerate}
  Furthermore, assume there are constants~\( \varrho, \delta \in (0, 1] \) and~\( B, C \geq 1 \) such that, for all~\( x \geq 1 \) and all intervals~\( I \subset [ x/2, x ] \),
  \begin{alignat}{2}
    | \Sigma_g(I; a, h; R) | &\leq C \d( (a, h) ) \frac{ x (\log x)^B }{ R^\varrho } \quad &\text{for} \quad 1 \leq a, |h|, R &\leq x^\delta. \label{eq:assumption on g}
    \intertext{Then there exists~\( C', \delta' > 0 \) depending only on~\( \varrho\), \( \delta\), \(\sigma\) and~\(M\), such that, for all~\( x \geq 1 \) and all intervals~\( I \subset [ x/2, x ] \),}
    | \Sigma_f(I; a, h; R) | &\leq H C C' \d( (a, h) ) \frac{ x (\log x)^B }{ R^\varrho } \quad &\text{for} \quad 1 \leq a, |h|, R &\leq x^{\delta'}.
  \end{alignat}
\end{lemma}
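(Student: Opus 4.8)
The strategy is to write $f = g \ast (f \ast g^{-1})$ and exploit this factorization to reduce the estimate for $\Sigma_f$ to the hypothesized estimate~\eqref{eq:assumption on g} for $\Sigma_g$. Set $h := f \ast g^{-1}$, so that $f(n) = \sum_{n = d m} h(d) g(m)$, and hence, for an interval $I \subset [x/2, x]$,
\[
  \Sigma_f(I; a, h; R) = \sum_{n \in I} \Delta_h(an; R) \sum_{\substack{n = d m}} h(d) g(m) = \sum_{d \geq 1} h(d) \sum_{\substack{m : dm \in I}} g(m) \Delta_h(a d m; R).
\]
The inner sum over $m$ runs over an interval $I/d \subset [x/(2d), x/d]$, and the shifted-divisor difference $\Delta_h(adm; R)$ is exactly of the form appearing in $\Sigma_g$ but with the multiplier $a$ replaced by $ad$. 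Thus the inner sum equals $\Sigma_g(I/d; ad, h; R)$, and we may apply the hypothesis~\eqref{eq:assumption on g} provided $ad$, $|h|$ and $R$ are all at most $(x/d)^\delta$.

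First I would dispose of the tail $d > x^{\delta''}$ for a suitable small $\delta'' > 0$. There the trivial bound $\Delta_h(adm; R) \ll (adm)^\ee R^{1+\ee}$ together with $|g| \leq \d_M$ and condition~\ref{enum: second condition} — which forces $\sum_{d > T} |h(d)|/d^\sigma \to 0$, in fact $\sum_{d} |h(d)| d^{-\sigma} = H < \infty$ with $\sigma < 1$ — gives a saving of a power of $x$ (one uses $m \leq x/d$, so the number of terms is $O(x/d)$, and sums $|h(d)| (x/d)^{1+\ee} d^\ee$ against $d^{-\sigma}$ with a positive exponent of $d$ to spare). Choosing $\delta''$ small enough in terms of $\sigma$, $M$, $\varrho$, this tail is $O(x (\log x)^{-A'})$ for any fixed $A'$, certainly admissible. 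For the main range $d \leq x^{\delta''}$ I would apply~\eqref{eq:assumption on g} with $x$ replaced by $x/d$ and $a$ replaced by $ad$: this is legitimate once $\delta'$ (the exponent in the conclusion) and $\delta''$ are chosen so that $a d \leq x^{\delta'} \cdot x^{\delta''} \leq (x/d)^\delta$ and similarly $|h|, R \leq (x/d)^\delta$ hold in the claimed range $1 \le a, |h|, R \le x^{\delta'}$. This yields the bound $C \d((ad, h)) (x/d)(\log(x/d))^B R^{-\varrho}$ for the inner sum. Using $\d((ad,h)) \leq \d(ad) \d((a,h)) \ll_\ee d^\ee \d((a,h))$ — or more carefully $\d((ad,h)) \le \d(d)\d((a,h))$ — and $\log(x/d) \le \log x$, the contribution of the main range is
\[
  \ll C \d((a,h)) \frac{x (\log x)^B}{R^\varrho} \sum_{d \leq x^{\delta''}} \frac{|h(d)| \d(d)}{d},
\]
and the last sum is $O(H')$ for a constant depending only on... — here one needs a little care, since the natural convergent sum from~\ref{enum: second condition} is $\sum |h(d)| d^{-\sigma}$ with $\sigma < 1$, not $\sum |h(d)| \d(d) d^{-1}$. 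Since $\d(d) d^{\sigma - 1} \ll_\ee d^{\sigma - 1 + \ee} \ll 1$ for $\ee$ small, we indeed have $\sum_d |h(d)| \d(d) d^{-1} \ll_{\sigma} \sum_d |h(d)| d^{-\sigma} = H$, up to an absolute constant; absorbing this constant and the one from $\d((ad,h))$ into $C'$ gives the claimed bound $H C C' \d((a,h)) x (\log x)^B R^{-\varrho}$.

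The main obstacle, and the only genuinely delicate point, is the bookkeeping of the uniformity parameters: one must pick $\delta''$ (the cutoff for the $d$-sum) and $\delta'$ (the exponent in the conclusion) simultaneously, so that (a) the trivial bound on the tail $d > x^{\delta''}$ beats any power of $\log x$ — this requires $\delta''$ small relative to $1 - \sigma$ and to $\varrho$ — and (b) in the main range the rescaled hypothesis~\eqref{eq:assumption on g} can be applied, i.e. $x^{\delta' + \delta''} \leq (x/x^{\delta''})^\delta = x^{\delta(1 - \delta'')}$ and similarly for the other parameters, which forces $\delta' + \delta'' \le \delta(1-\delta'')$. Both are arrangeable with $\delta', \delta''$ depending only on $\delta, \sigma, \varrho, M$, as claimed. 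Everything else is the routine manipulation of Dirichlet convolutions and the elementary bound $\d(d) \ll_\ee d^\ee$.
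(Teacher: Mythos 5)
Your argument is correct and is essentially the paper's own proof: write $f = g \ast (f\ast g^{-1})$, apply the hypothesis on $\Sigma_g(I/d; ad, h; R)$ for small $d$ (using $\d((ad,h))\leq \d(d)\d((a,h))$ and $\sum_d |h(d)|\d(d)/d \ll_\sigma H$), bound the tail trivially via $\Delta_h \ll n^\ee R^{1+\ee}$, and choose the cutoff and $\delta'$ to make both ranges admissible. The paper does exactly this with cutoff $T = x^{\delta/3}$ and $\delta' = \min(\delta/3, \delta(1-\sigma)/(4(1+\varrho)))$, so no further comment is needed.
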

\begin{proof}
  Let \( h := f \ast g^{-1} \).
  We have
  \begin{align*}
    \Sigma_f(I; a, h; R) &= \sum_{ n_1 n_2 \in I } g(n_1) h(n_2) \Delta_h(an_1 n_2; R) \\
      &= \sum_{ n_2 \leq T } h(n_2) \Sigma_g( I / n_2; a n_2, h; R ) + \sum_{ n_2 > T } h(n_2) \Sigma_g( I / n_2; a n_2, h; R ),
  \end{align*}
  for some parameter~\( T \geq 1 \). For the sum on the left we use the assumption~\eqref{eq:assumption on g}, so that
  \[ \sum_{ n_2 \leq T } h(n_2) \Sigma_g( I / n_2; a n_2, h; R ) \ll_{\sigma} C H \d( (a, h) ) \frac{ x (\log x)^B }{ R^\varrho }, \]
  provided that the parameters \(a\), \(h\) and~\(R\) satisfy
  \[ 1 \leq a \leq \frac{ x^\delta }{ T^{1 + \delta} } \quad \text{and} \quad 1 \leq |h|, R \leq \frac{ x^\delta }{ T^\delta }. \]
  For the sum on the right we use the trivial bound~$\Sigma_g( I / n_2; a n_2, h; R) \ll_{\ee, M} R x^{1+\ee}/n_2$, and get
  \[ \sum_{ n_2 > T } h(n_2) \Sigma_g( I / n_2; a n_2, h; R ) \ll_{\ee, M} x^{1+\ee} R T^{-1+\sigma} H. \]
  The lemma follows on setting~\( T = x^{\delta/3}\) and~$\delta' = \min(\frac\delta3, \frac{\delta(1-\sigma)}{4(1+\rho)})$.
\end{proof}

In view of this, in order to prove Proposition~\ref{prop:bound-largechars}, we will restrict to the following two subsets of~\( \mathcal F_D(A) \).
The first subset, denoted by~\( \mathcal F_D^\d(A) \), consists of functions~\( f : \N \to \C \),  which are the coefficents of Dirichlet series of the form
\begin{equation} \label{eq:definition of F'}
  \sum_{n = 1}^\infty \frac{ f(n) }{ n^s } = \prod_{ \chi \bmod D } L(s, \chi)^{b_\chi},
\end{equation}
where the parameters~\( b_\chi \) are complex numbers such that~\( |b_\chi| \leq A \). Note that~$\d_z\in\F_D^\d(A)$ for~$\abs{z} \leq A$.
A particularly important role will be played by the subset~\( \mathcal F_D^{\d_\Q}(A) \subset \mathcal F_D^\d(A) \) formed by functions of this form where all the parameters~\( b_\chi \) are rational.

The second subset~\( \mathcal F_D^\omega(A) \) is defined to be the set of functions~\( f : \N \to \C \), which are the coefficients of Dirichlet series of the form
\begin{equation} \label{eq:definition of F''}
  \sum_{n = 1}^\infty \frac{ f(n) }{n^s} = \prod_{r \in (\Z/D\Z)^\times} \prod_{ p \equiv r \bmod D } \left( 1 + \frac{ z_r }{ p^s - 1 } \right),
\end{equation}
where the coefficients~\( z_r \) are complex numbers such that~\( |z_r| \leq A \). This includes the functions~$n\mapsto z^{\omega(n)}$ for all~$\abs{z}\leq A$.

\begin{lemma} \label{lemma: existence of g_1 and g_2}
  For any~\( f \in \mathcal F_D(A) \), there exist \( g_1 \in \mathcal F_D^\d(A) \)~and~\( g_2 \in \mathcal F_D^\omega(A) \) which satisfy the conditions~\ref{enum: first condition}--\ref{enum: second condition} stated in Lemma~\ref{lemma:replacing f by g} for~$\sigma = \frac23$, and~\(M, H\) bounded only in terms of~\(A\) and~\(D\).
\end{lemma}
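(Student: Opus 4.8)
The plan is to construct $g_1$ and $g_2$ so that they agree with $f$ on the primes. The multiplicative function $f\ast g_i^{-1}$ will then have vanishing linear coefficient (coefficient of $p^{-s}$) in each of its Euler factors, which forces its Dirichlet series to converge absolutely in a half-plane $\Re(s)>\tfrac12$ -- in particular at $\sigma=\tfrac23$ -- giving condition~\ref{enum: second condition}. Concretely, for each $r\in(\Z/D\Z)^\times$ I would set $a_r:=f(p)$ for an arbitrary prime $p\equiv r\bmod D$ (well-defined since $f\in\F_D(A)$, with $|a_r|\le\tau_A(p)=A$). For $g_2$ I take $z_r:=a_r$ in~\eqref{eq:definition of F''}; for $g_1$ I take $b_\chi:=\varphi(D)^{-1}\sum_{r}\overline\chi(r)\,a_r$ in~\eqref{eq:definition of F'}, so that $|b_\chi|\le A$ and, by orthogonality of the characters mod $D$, $\sum_{\chi\bmod D}b_\chi\chi(p)=a_r$ for every prime $p\equiv r\bmod D$. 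This gives $g_1\in\F_D^\d(A)$ and $g_2\in\F_D^\omega(A)$.

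For condition~\ref{enum: first condition} I would bound the local Dirichlet coefficients; note that both $g_1$ and $g_2$ are supported on integers coprime to $D$. The Euler factor of $g_1$ at a prime $p\nmid D$ is $\prod_\chi(1-\chi(p)p^{-s})^{-b_\chi}$, and using $|\binom{w+k-1}{k}|\le\binom{|w|+k-1}{k}$ one sees that its $p^{-ks}$-coefficient is dominated by that of $(1-p^{-s})^{-\sum_\chi|b_\chi|}$; hence $|g_1(n)|\le\tau_{\varphi(D)A}(n)$ and $M=\varphi(D)A$ works. For $g_2$ one has $|g_2(p^k)|\le A$ for all $p\nmid D$, so $|g_2(n)|\le A^{\omega(n)}\le\tau_A(n)$ and $M=A$.

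For condition~\ref{enum: second condition} I would set $h_i:=f\ast g_i^{-1}$ (multiplicative) and use $\sum_n|h_i(n)|n^{-2/3}=\prod_p\big(\sum_{k\ge0}|h_i(p^k)|p^{-2k/3}\big)$, so that it suffices to control each local factor and check convergence of the product, with bound depending only on $A,D$. For $p\nmid D$ the Euler factor of $h_1$ at $p$ is $\big(\sum_{k\ge0}f(p^k)p^{-ks}\big)\prod_\chi(1-\chi(p)p^{-s})^{b_\chi}$; its constant coefficient is $1$ and, by the choice of $b_\chi$, its coefficient of $p^{-s}$ is $f(p)-\sum_\chi b_\chi\chi(p)=0$. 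Since each factor has radius of convergence $\ge1$ as a power series in $p^{-s}$ and coefficients $\le\tau_A(p^k)$, the Euler factor of $h_1$ has coefficients $\le\tau_{(1+\varphi(D))A}(p^k)$ and no linear term, whence $\sum_{k\ge2}|h_1(p^k)|p^{-2k/3}\ll_{A,D}p^{-4/3}$; and for the finitely many $p\mid D$ one has $h_1(p^k)=f(p^k)$ and $\sum_k|f(p^k)|p^{-2k/3}\le(1-p^{-2/3})^{-A}$. Multiplying over all $p$ yields $\sum_n|h_1(n)|n^{-2/3}=O_{A,D}(1)$. The argument for $h_2$ runs the same way, its Euler factor at $p\equiv r$, $p\nmid D$, being $\big(\sum_{k\ge0}f(p^k)p^{-ks}\big)\big(1+a_r/(p^s-1)\big)^{-1}$, whose linear coefficient again vanishes because $z_r=a_r=f(p)$.

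The main obstacle I anticipate is the bookkeeping in the last step: one has to make sure that, although every Euler factor of $f\ast g_i^{-1}$ is of the shape $1+O(p^{-2s})$, the full Dirichlet series really is absolutely convergent at $\sigma=\tfrac23$, which comes down to controlling the coefficients at higher prime powers and handling the (finitely many) primes dividing $D$ separately. For $g_1$ this is transparent, since powers of Euler factors of Dirichlet $L$-functions have radius of convergence $\ge1$ in $p^{-s}$; for $g_2$ it requires a little more care with the factor $\big(1+z_r/(p^s-1)\big)^{-1}$.
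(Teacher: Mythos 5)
Your construction is exactly the paper's: the paper likewise sets $v_f(r)=f(p)$ for primes $p\equiv r\bmod D$, defines $g_1$ through the Fourier coefficients $b_\chi=\varphi(D)^{-1}\sum_{r}v_f(r)\overline\chi(r)$ and $g_2$ through $z_r=v_f(r)$, and verifies conditions (i)--(ii) by the same Euler-product computation resting on the vanishing of $(f\ast g_i^{-1})(p)$ for $p\nmid D$ together with the bound $|g_i(n)|\leq\tau_M(n)$ with $M\ll_{A,D}1$. The one point you flag as delicate --- the geometric growth of the coefficients of $\left(1+z_r/(p^s-1)\right)^{-1}$ at small primes in the $g_2$ case --- is not treated in the paper either (it disposes of $g_2$ with ``similar computations as above''), so your write-up is at least as complete as the paper's own proof.
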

\begin{proof}
  We first prove the lemma with respect to the set~\( \mathcal F_D^\d(A) \).
  Let \( f \in \mathcal F_D(A) \) be fixed, and let~\( v_f : \Z \to \C \) be the \(D\)-periodic function defined by
  \begin{equation} \label{eq: definition of v_f}
    v_f(r) = \begin{cases}
               f(p) \quad &\text{if there exists a prime~\(p\) such that \( (p, D) = 1 \) and \( p \equiv r \bmod D\),} \\
               0 \quad &\text{if \( (r, D) > 1 \).}
              \end{cases}
  \end{equation}
  We then set
  \begin{alignat*}{2}
    b_\chi &:= \frac1{\vphi(D)} \sum_{r \mod D} v_f(r) \overline{\chi}(r) \quad &\text{for any character \( \chi\bmod{D} \),}
  \end{alignat*}
  and define \( g_1 \) as the coefficients of the following Dirichlet series,
  \begin{equation} \label{eq: g_1 as a Dirichlet series}
    \sum_{n = 1}^\infty \frac{ g_1(n) }{ n^s } := \prod_{\chi\mod{D}} L(s, \chi)^{b_\chi}.
  \end{equation}
  We have~$(f \ast g_1^{-1})(p) = 0$ if~$p\nmid D$. Moreover, since~$\abs{b_\chi} \leq A$, we get~$\abs{g_1(n)} \leq \d_{AD}(n)$ for all~$n$. Therefore,
  $$ \sum_{n\geq 1} \frac{\abs{(f\ast g_1^{-1})(n)}}{n^{2/3}} = \prod_{p\mid D} \Big(1 + \O_{A,D}\Big(\frac1{p^{2/3}}\Big)\Big) \prod_{p\nmid D} \Big(1 + \O_{A,D}\Big(\frac1{p^{4/3}}\Big)\Big) = \O_{A, D}(1). $$
  This proves the first part of the lemma.
  
  For the second part, we define \(g_2\) by its Dirichlet series
  \[ \sum_{n = 1}^\infty \frac{ g_2(n) }{n^s} := \prod_{r \in (\Z/D\Z)^\times} \prod_{ \substack{ \text{\(p\) prime} \\ p \equiv r \bmod D } }\left( 1 + \frac{ v_f(r) }{p^s - 1} \right). \]
  The fact that \(g_2\) satisfies the required conditions can be shown using similar computations as above.
\end{proof}

Let us at this point also note the following result, which is an easy consequence of the proofs of Lemmas \ref{lemma:replacing f by g}~and~\ref{lemma: existence of g_1 and g_2}, and which will become useful later on.

\begin{lemma} \label{lemma: Dirichlet series of f}
  Let~\( f \in \mathcal F_D(A) \) and let \(\psi\) mod~\(q\) be a Dirichlet character.
  Then the Dirichlet series associated to~\( \psi f \) is given by
  \[ \sum_{n = 1}^\infty \frac{ \psi(n) f(n) }{n^s} = H_\psi(s) \prod_{ \chi \bmod D } L(s, \psi \chi)^{b_\chi} \quad \text{for} \quad \Re(s) > 1, \]
  where \( H_\psi(s) \) is some holomorphic function defined in~\( \Re(s) > \frac12 \) and where
  \[ b_\chi := \frac1D \sum_{ r \bmod D } v_f(r) \overline\chi(r), \]
  with \( v_f(n) \) as defined in~\eqref{eq: definition of v_f}.
  Moreover, for any fixed~\( \sigma_0 > \frac12 \), we have \( H_\psi(s) \ll 1 \) uniformly in~\( \Re(s) > \sigma_0 \), with the implicit constant depending at most on~\(\sigma_0\), \(A\) and~\(D\).
\end{lemma}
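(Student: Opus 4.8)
The plan is to recycle the auxiliary function $g_1$ constructed in the proof of Lemma~\ref{lemma: existence of g_1 and g_2} and to push the elementary convolution identity $f = g_1 \ast (f \ast g_1^{-1})$ through the completely multiplicative twist by $\psi$. So first I would recall from that proof that $g_1$, characterised by $\sum_{n\geq 1} g_1(n) n^{-s} = \prod_{\chi \bmod D} L(s,\chi)^{b_\chi}$ with the $b_\chi$ as given there, satisfies $(f\ast g_1^{-1})(p) = 0$ for every prime $p\nmid D$; equivalently, the Euler factor of $h := f\ast g_1^{-1}$ at such a prime has no $p^{-s}$-term, hence is of the form $1 + \O_{A,D}(p^{-2s})$, while at the finitely many $p\mid D$ the local sum $\sum_{\nu\geq 0}\abs{h(p^\nu)}p^{-\nu\sigma}$ converges for every $\sigma>0$. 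Running the computation at the end of the proof of Lemma~\ref{lemma: existence of g_1 and g_2} with a general exponent $\sigma_0 > \tfrac12$ in place of $\tfrac23$ then gives $\sum_{n\geq 1}\abs{h(n)}n^{-\sigma_0} = \O_{\sigma_0,A,D}(1)$; consequently the Dirichlet series $H_\psi(s) := \sum_{n\geq 1}\psi(n)h(n)n^{-s}$ converges absolutely and satisfies $H_\psi(s)\ll 1$ uniformly for $\Re(s)\geq\sigma_0$, with an implied constant depending only on $\sigma_0$, $A$ and $D$, and in particular $H_\psi$ is holomorphic in $\Re(s)>\tfrac12$.

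The second step is the twist itself. A Dirichlet character being completely multiplicative, the operation $a\mapsto \psi a$ commutes with Dirichlet convolution, so from $f = g_1\ast h$ we get $\psi f = (\psi g_1)\ast(\psi h)$, whence, for $\Re(s)>1$ where all the series involved converge absolutely,
\[ \sum_{n\geq 1}\frac{\psi(n)f(n)}{n^s} = \Bigl(\sum_{n\geq 1}\frac{\psi(n)g_1(n)}{n^s}\Bigr)\,H_\psi(s). \]
It then remains to identify the first factor: writing $g_1$ as the Dirichlet convolution over $\chi\bmod D$ of the multiplicative functions $n\mapsto\d_{b_\chi}(n)\chi(n)$, whose Dirichlet series is the Euler product $\prod_p(1-\chi(p)p^{-s})^{-b_\chi} = L(s,\chi)^{b_\chi}$, the twist by $\psi$ simply replaces $\chi$ by $\psi\chi$ in each factor, so that $\sum_{n\geq 1}\psi(n)g_1(n)n^{-s} = \prod_{\chi\bmod D}L(s,\psi\chi)^{b_\chi}$ for $\Re(s)>1$. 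Substituting this into the display above yields exactly the asserted factorisation, with $H_\psi(s) = \sum_{n\geq 1}\psi(n)(f\ast g_1^{-1})(n)n^{-s}$.

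The only point that genuinely requires care — and it is essentially already contained in the proof of Lemma~\ref{lemma: existence of g_1 and g_2} — is to extract the full strength of the vanishing of $f\ast g_1^{-1}$ at primes coprime to $D$: this is what pushes the half-plane of absolute convergence of $H_\psi$ down to $\Re(s)>\tfrac12$ (a crude bound of $\abs{h(n)}$ by a divisor function would only give $\Re(s)>1$), and it is also what one must track in order to see that the bound $H_\psi\ll 1$ on $\Re(s)\geq\sigma_0$ is uniform, with a constant that blows up only as $\sigma_0\to\tfrac12^+$. Everything else is a formal manipulation of absolutely convergent Dirichlet series and Euler products, so I expect no further difficulty.
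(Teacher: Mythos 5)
Your proposal is correct and is exactly the argument the paper intends (the lemma is introduced there as an easy consequence of the proofs of Lemmas~\ref{lemma:replacing f by g} and~\ref{lemma: existence of g_1 and g_2}): you write $\psi f=(\psi g_1)\ast(\psi(f\ast g_1^{-1}))$, identify $\sum_n \psi(n)g_1(n)n^{-s}=\prod_{\chi\bmod D}L(s,\psi\chi)^{b_\chi}$ for $\Re(s)>1$, and use the vanishing of $f\ast g_1^{-1}$ at primes $p\nmid D$ to get absolute convergence and the uniform bound for $H_\psi$ in $\Re(s)\geq\sigma_0>\tfrac12$. The only discrepancy is notational: the normalisation $b_\chi=\frac1{\varphi(D)}\sum_{r}v_f(r)\overline\chi(r)$ that you carry over from the proof of Lemma~\ref{lemma: existence of g_1 and g_2} is the correct one, the factor $\frac1D$ in the statement being a typo, so your proof establishes the intended statement.
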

From Lemmas~\ref{lemma:replacing f by g} and \ref{lemma: existence of g_1 and g_2}, we deduce the following statement.

\begin{lemma} \label{lemma: F' or F'' implies F}
  To prove Proposition~\ref{prop:bound-largechars} in full generality, it suffices to prove it under either one of the additional hypotheses~\( f\in \mathcal F_D^\d(A) \) or~\( f\in \mathcal F_D^\omega(A) \).
\end{lemma}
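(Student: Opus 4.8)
The plan is to combine Lemma~\ref{lemma:replacing f by g} with Lemma~\ref{lemma: existence of g_1 and g_2} in a direct bootstrapping argument. Suppose Proposition~\ref{prop:bound-largechars} is known under the hypothesis $f \in \mathcal F_D^\d(A)$; I claim it then follows for an arbitrary $f \in \mathcal F_D(A)$. First I would fix such an $f$ and invoke Lemma~\ref{lemma: existence of g_1 and g_2} to produce $g = g_1 \in \mathcal F_D^\d(A)$ satisfying conditions \ref{enum: first condition}--\ref{enum: second condition} of Lemma~\ref{lemma:replacing f by g} with $\sigma = \tfrac23$ and with $M$ and $H = \sum_n |(f \ast g^{-1})(n)| n^{-2/3}$ bounded only in terms of $A$ and $D$. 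By the assumed case of the Proposition applied to $g$ (with the constants $B, C$ depending only on $A, D$, and with exponent $\varrho = \tfrac13$ and some absolute $\delta > 0$), the hypothesis \eqref{eq:assumption on g} of Lemma~\ref{lemma:replacing f by g} holds. Lemma~\ref{lemma:replacing f by g} then yields
\[
  |\Sigma_f(I; a, h; R)| \leq H C C' \, \d((a,h)) \, \frac{x (\log x)^B}{R^{1/3}} \quad \text{for} \quad 1 \leq a, |h|, R \leq x^{\delta'},
\]
where $C'$ and $\delta' > 0$ depend only on $\varrho = \tfrac13$, $\delta$, $\sigma = \tfrac23$ and $M$ — hence ultimately only on $A$ and $D$. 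Since $HCC'$ and $\delta'$ depend only on $A$ and $D$, this is exactly the statement of Proposition~\ref{prop:bound-largechars} for $f$, with the constant $C$ replaced by $HCC'$ and $\delta$ by $\delta'$.

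The argument is symmetric under replacing $\mathcal F_D^\d(A)$ by $\mathcal F_D^\omega(A)$: Lemma~\ref{lemma: existence of g_1 and g_2} equally provides $g_2 \in \mathcal F_D^\omega(A)$ meeting the same hypotheses, so if the Proposition is known for all $f \in \mathcal F_D^\omega(A)$ one runs the identical deduction with $g = g_2$. In either case the conclusion for general $f \in \mathcal F_D(A)$ follows, which is what the lemma asserts.

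There is essentially no obstacle here: the lemma is a bookkeeping statement and all the analytic content has been front-loaded into Lemmas~\ref{lemma:replacing f by g} and~\ref{lemma: existence of g_1 and g_2}. The only point requiring a word of care is the uniformity of constants — one must check that the $B$ appearing in the conclusion is the \emph{same} $B$ as in the hypothesis (it is, since Lemma~\ref{lemma:replacing f by g} preserves the power of $\log x$ verbatim), and that the various implied constants and the exponent $\delta'$ genuinely collapse to depend only on $A$ and $D$ and not on the particular $f$. This is immediate from the stated dependencies: $\sigma = \tfrac23$ is fixed, $M$ and $H$ are controlled by $A, D$ via Lemma~\ref{lemma: existence of g_1 and g_2}, and $\varrho = \tfrac13$, $\delta$ come from the assumed special case of the Proposition.
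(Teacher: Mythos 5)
Your proposal is correct and follows exactly the route the paper intends: the paper gives no separate argument for this lemma beyond stating that it is deduced from Lemma~\ref{lemma:replacing f by g} and Lemma~\ref{lemma: existence of g_1 and g_2}, which is precisely the bootstrapping you carry out (including the correct bookkeeping that $\sigma=\tfrac23$, $M$, $H$ depend only on $A,D$, so the new constants $HCC'$ and $\delta'$ do as well). Nothing further is needed.
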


\section{Combinatorial identites for~\texorpdfstring{\( \tau_z(n) \)}{tau\_z(n)}} \label{section: combinatorial identities}

In this section we describe the two combinatorial identites for the generalized divisor function~\( \tau_z \) on which the proofs of Theorem~\ref{th:main} are based.

\subsection{A generalization of Heath-Brown's identity} \label{section: generalization of Heath-Brown's identity}

We first derive a combinatorial decomposition analogous to~\cite{HB} for the function~$n\mapsto \d_\alpha(n)$ in the case~$\alpha\in\Q$. Our argument is based on the following polynomial identity.

\begin{lemma} \label{lemma:polynomial identity}
  Let \(u\) and \( v\) be integers such that \( v > u \geq 0 \).
  Let \( K \geq 1 \) and \( N \geq 0 \).
  Then there exist rational coefficients \(a_m\)~and~\(b_\ell\) such that there holds
  \begin{align} \label{eq:polynomial identity}
    \sum_{ K \leq m \leq (K + N) v - u } a_m ( X - 1 )^m = 1 + X^{Nv} \sum_{ 1 \leq \ell \leq K } b_\ell X^{ \ell v - u }.
  \end{align}
  The coefficients \((b_\ell)\) are unique and given explicitly by
  \begin{align} \label{eq:definition of b_l}
    b_\ell = \frac{ (-1)^\ell }{ (\ell - 1)! (K - \ell)! } \prod_{ \substack{ 1 \leq j \leq K \\ j \neq \ell } } \left( j + N - \frac uv \right).
  \end{align}
\end{lemma}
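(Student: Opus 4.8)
The plan is to deduce the identity~\eqref{eq:polynomial identity} from a partial fraction decomposition in an auxiliary variable, which simultaneously explains the shape of the right-hand side and produces the explicit formula~\eqref{eq:definition of b_l}. Working formally in the ring of power series $\Q[[Y]]$ with $Y = X - 1$, or equivalently writing $X = 1 + Y$, the point is that the rational function
\[ F(X) := \frac{ 1 }{ X^{Nv}\prod_{1 \le j \le K} \left( X^{jv-u} \cdot (\text{normalizing factor}) \right) } \]
is not quite the right object; instead I would argue directly as follows. First I would observe that the claimed identity is equivalent to saying that the power series
\[ G(X) := 1 + X^{Nv} \sum_{1 \le \ell \le K} b_\ell X^{\ell v - u} \]
vanishes to order at least $K$ at $X = 1$ (i.e.\ $(X-1)^K \mid G(X)$ as a polynomial), together with the trivial observation that $\deg G = Nv + Kv - u$, so that the quotient $G(X)/(X-1)^K$ is a polynomial whose coefficients, shifted, give the $a_m$ with the stated index range $K \le m \le (K+N)v - u$. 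The uniqueness of the $(b_\ell)$ is then immediate: if two choices worked, their difference would be a linear combination of $X^{Nv + \ell v - u}$, $1 \le \ell \le K$, divisible by $(X-1)^K$; but those $K$ monomials are linearly independent modulo $(X-1)^K$ (their ``Taylor coefficients at $1$'' form an invertible $K \times K$ matrix, e.g.\ a confluent Vandermonde / the monomials have distinct exponents), forcing the difference to be zero.

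The core computation is therefore to verify that the explicit $b_\ell$ of~\eqref{eq:definition of b_l} make $G$ vanish to order $K$ at $X=1$, i.e.\ $G^{(i)}(1) = 0$ for $0 \le i \le K-1$. I would reformulate this via a generating-function / Lagrange-interpolation identity: set $c_\ell := b_\ell$ and consider the function $\ell \mapsto$ (contribution), noting that~\eqref{eq:definition of b_l} is exactly, up to the sign and factorial normalization, the value at the node $\ell$ of the Lagrange basis polynomial for the nodes $\{1, \dots, K\}$ evaluated at the point $u/v - N$. Concretely, recall the partial-fraction expansion
\[ \frac{1}{\prod_{1 \le j \le K}(T - j)} = \sum_{1 \le \ell \le K} \frac{1}{\prod_{j \ne \ell}(\ell - j)} \cdot \frac{1}{T - \ell} = \sum_{\ell=1}^{K} \frac{(-1)^{K-\ell}}{(\ell-1)!(K-\ell)!} \cdot \frac{1}{T-\ell}. \]
Evaluating a suitable transform of this at $T = u/v - N$ and matching with the Taylor expansion of $X^{Nv + \ell v - u} = (1+Y)^{Nv+\ell v - u}$, whose coefficients are $\binom{Nv + \ell v - u}{i}$ — polynomials in $\ell$ of degree $i \le K-1$ — one sees that $\sum_\ell b_\ell \binom{(N+\ell)v - u}{i}$ is, for each $0 \le i \le K-1$, a divided difference of order $K$ of a polynomial of degree $\le K-1$ in $\ell$, hence vanishes; the $i=0$ coefficient of $G$ is handled by the extra ``$1 +$'' together with the $i=0$ term. (Equivalently one can argue that $\sum_{\ell} \tfrac{(-1)^\ell}{(\ell-1)!(K-\ell)!} \prod_{j \ne \ell}(j + N - u/v) \cdot P(\ell) = 0$ for every polynomial $P$ of degree $< K$, which is the standard fact that a $K$-th finite difference kills degree-$<K$ polynomials.)

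The main obstacle I anticipate is purely bookkeeping: keeping the normalization constants, signs, and the shift between the exponent $(N+\ell)v - u$ and the interpolation node $\ell$ consistent, so that the $b_\ell$ come out exactly as in~\eqref{eq:definition of b_l} rather than off by a factor involving $v$ or $(-1)^K$. Once the vanishing $(X-1)^K \mid G(X)$ is established with the correct constants, existence of the $a_m$ is automatic by polynomial division, the degree bound pins down the index set $K \le m \le (K+N)v - u$, and uniqueness of $(b_\ell)$ follows from the linear-independence remark above. There is no analytic content; everything takes place in $\Q[X]$.
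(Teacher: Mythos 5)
Your overall skeleton is sound and, at bottom, matches the paper's reduction: both arguments rest on the observation that \eqref{eq:polynomial identity} holds if and only if $G(X) := 1 + X^{Nv}\sum_{1\le\ell\le K} b_\ell X^{\ell v-u}$ vanishes to order $K$ at $X=1$, the $a_m$ then coming for free from the degree of $G$. The paper turns this into a linear system whose matrix is a Vandermonde matrix in the exponents $(N+\ell)v-u$ and \emph{derives} \eqref{eq:definition of b_l} by Cramer's rule; you instead propose to \emph{verify} the stated $b_\ell$ by Lagrange interpolation, and your identification of $b_\ell$ as $-L_\ell(u/v-N)$, with $L_\ell$ the Lagrange basis polynomial for the nodes $\{1,\dots,K\}$, is correct, as are your uniqueness and degree-count remarks.

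The gap is in the verification step, which as written is false. Your parenthetical claim that $\sum_{\ell=1}^K \tfrac{(-1)^\ell}{(\ell-1)!(K-\ell)!}\prod_{j\neq\ell}(j+N-u/v)\,P(\ell)=0$ for \emph{every} polynomial $P$ of degree $<K$ fails already for $P\equiv 1$ (or for $K=1$): that sum equals $\sum_\ell b_\ell=-1$, and it must, since $G(1)=0$ forces $\sum_\ell b_\ell=-1$. Likewise, your sum over the $K$ nodes $1,\dots,K$ is not ``a divided difference of order $K$'': an order-$K$ divided difference uses $K+1$ nodes, and the weights $b_\ell$ are not the divided-difference weights for $\{1,\dots,K\}$, so the vanishing cannot be deduced from the degree bound alone. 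The correct statement, which does close the argument, is $\sum_{\ell=1}^K b_\ell P(\ell) = -P(u/v-N)$ for all $P$ of degree $\le K-1$; this is exactly Lagrange interpolation at $1,\dots,K$ evaluated at $u/v-N$, or equivalently the order-$K$ divided difference over the $K+1$ nodes $\{u/v-N,1,\dots,K\}$, the ``$1+$'' in $G$ supplying the weight at the extra node. Apply it with $P_i(\ell)$ the falling factorial $\bigl((N+\ell)v-u\bigr)\bigl((N+\ell)v-u-1\bigr)\cdots\bigl((N+\ell)v-u-i+1\bigr)$, of degree $i\le K-1$ in $\ell$: for $1\le i\le K-1$ its first factor vanishes at $\ell=u/v-N$, so $G^{(i)}(1)=\sum_\ell b_\ell P_i(\ell)=0$, while for $i=0$ one gets $\sum_\ell b_\ell=-1$, cancelling the constant term and giving $G(1)=0$. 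With this correction your route is complete and gives \eqref{eq:definition of b_l} rather more conceptually than Cramer's rule; without it, the key vanishing is asserted on the basis of a false identity.
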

\begin{proof}
  An identity of the form~\eqref{eq:polynomial identity} exists if and only if we can find \( b_1, \ldots, b_ K \) such that the first~\(K - 1\) derivatives of the polynomial on the right hand side of~\eqref{eq:polynomial identity} vanish at~\( X = 1 \).
  This is equivalent to saying that the \( b_1, \ldots, b_K \) solve the equation
  \begin{align} \label{eq:linear equation for the b_l}
    \begin{pmatrix} 1 & \cdots & 1 \\ v + Nv - u & \cdots & Kv + Nv - u \\ \vdots & \ddots & \vdots \\ (v + Nv - u)^{K - 1} & \cdots & (Kv + Nv - u)^{K - 1} \end{pmatrix} \begin{pmatrix} b_1 \\ b_2 \\ \vdots \\ b_K \end{pmatrix} = \begin{pmatrix} -1 \\ 0 \\ \vdots \\ 0 \end{pmatrix}.
  \end{align}
  Let~$C$ be the matrix on the left, and \( B_\ell \) the same matrix but with the upper row and the \(\ell\)-th column removed. Note that~$C$ is a Vandermonde matrix, and~$B_\ell$ is a product of a Vandermond matrix with a diagonal matrix. Hence, we deduce
  \begin{align*}
    \det C &= \prod_{ 1 \leq i < j \leq K } ( j v - i v) = 2! \, 3! \, \cdots \, (K - 1)! \, v^\frac{ K (K - 1) }2, \\
    \det B_\ell &= \prod_{ \substack{ 1 \leq i < j \leq K \\ i, j \neq \ell } } ( j v - i v ) \prod_{ \substack{ 1 \leq j \leq K \\ j \neq \ell } } (jv + Nv - u).
  \end{align*}
  Since~$\det C\neq 0$, we obtain by Cramer's rule that there is a unique solution~$(b_\ell)$, given by
  \begin{align} \label{eq:b_l according to Cramer's rule}
    b_\ell = (-1)^\ell \frac{ \det B_\ell }{ \det C },
  \end{align}
  which yields \eqref{eq:definition of b_l}.
\end{proof}

\begin{theorem}\label{th:combinatorial-identity}
  Let \(v>0\) and \(r\) be integers such that \( v > u \geq 0 \) and \( r \geq 0 \).
  Let~\( K \geq 1 \) and \( x \geq 1 \).
  Then for any~\( n \leq x \), we have
  \begin{align}
    \d_{ r + \frac uv }(n) &= \sum_{\ell = 1}^K c^+_\ell \mathop{\sum \cdots \sum}_{ \substack{ m_1 \cdots m_{\ell + r} n_1 \cdots n_{\ell v - u} = n \\ n_1, \ldots, n_{\ell v - u} \leq x^{1/K} } } \d_{-\frac1v}( n_1 ) \cdots \d_{-\frac1v}( n_{\ell v - u} ), \label{eq:decomposition of d_alpha, positive case}
    \intertext{and, for~\( r \geq 1 \),}
    \d_{ -r + \frac uv }(n) &= \sum_{\ell = 1}^K c^-_\ell \mathop{\sum \cdots \sum}_{ \substack{ m_1 \cdots m_{\ell - 1} n_1 \cdots n_{\ell v + (r - 1)v - u} = n \\ n_1, \ldots, n_{\ell v + (r - 1)v - u} \leq x^{1/K} } } \d_{-\frac1v}( n_1 ) \cdots \d_{-\frac1v}( n_{\ell v + (r - 1)v - u} ), \label{eq:decomposition of d_alpha, negative case}
  \end{align}
  where the \( c^+_\ell \) and \( c^-_\ell \) are certain rational numbers, which can be bounded by
  \[ c^+_\ell, c^-_\ell \ll 1 \quad \text{for} \quad 1 \leq \ell \leq K, \]
  the implicit constant depending only on \(K\)~and~\(r\).
\end{theorem}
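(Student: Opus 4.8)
The plan is to obtain both identities by substituting a suitable truncated Dirichlet series into the polynomial identity of Lemma~\ref{lemma:polynomial identity}. I will work in the ring of formal Dirichlet series, writing $\zeta^z$ for the series $\sum_n \d_z(n)\, n^{-s}$, so that $\zeta^z\zeta^w = \zeta^{z+w}$ for all $z,w\in\C$ (these fractional powers being well defined through~\eqref{eq:dalpha-pnu}), and setting $y := x^{1/K}$. Call a formal Dirichlet series \emph{$y$-short} if its coefficient at~$n$ vanishes for every $n\leq y$; a product of $m$ $y$-short series then has its coefficient at~$n$ vanishing for every $n\leq y^m$. Let
\[
  P(s) := \sum_{n\leq y} \d_{-1/v}(n)\, n^{-s}
\]
be the truncation of $\zeta^{-1/v}$, so that $\zeta^{-1/v}-P$ is $y$-short, and put $Y := \zeta^{1/v}P = 1 + \zeta^{1/v}\bigl(P-\zeta^{-1/v}\bigr)$. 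Then $Y-1$ is $y$-short, hence the coefficient of $(Y-1)^m$ at~$n$ vanishes for all $n\leq y^m$, and in particular for all $n\leq y^K = x$ as soon as $m\geq K$.

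I now apply Lemma~\ref{lemma:polynomial identity}, which, being an identity of polynomials, remains valid after the substitution $X=Y$: with the coefficients $b_\ell$ from~\eqref{eq:definition of b_l} and for any $N\geq 0$,
\[
  1 + Y^{Nv}\sum_{\ell=1}^K b_\ell\, Y^{\ell v - u} \ = \ \sum_{K\leq m\leq (K+N)v-u} a_m (Y-1)^m .
\]
By the preceding paragraph every coefficient of the right-hand side at an index $n\leq x$ vanishes, hence so does every such coefficient of the left-hand side. Multiplying by $\zeta^{\alpha}$ preserves this property, since the product of an arbitrary Dirichlet series with one whose coefficients at all indices $\leq x$ vanish again has this property; therefore, for every $n\leq x$,
\[
  \d_\alpha(n) \ = \ -\sum_{\ell=1}^K b_\ell\, \big[\zeta^{\alpha}\, Y^{Nv+\ell v-u}\big]_n ,
\]
where $[\,\cdot\,]_n$ denotes the $n$-th Dirichlet coefficient and $\alpha\in\C$ is arbitrary. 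Using $Y=\zeta^{1/v}P$ and $\zeta^a\zeta^b = \zeta^{a+b}$ gives $\zeta^{\alpha}Y^{Nv+\ell v-u} = \zeta^{\alpha+N+\ell-u/v}\,P^{Nv+\ell v-u}$. Taking $\alpha = r+u/v$ and $N=0$ makes the exponent of~$\zeta$ equal to the non-negative integer $r+\ell$ while that of $P$ equals $\ell v-u$; expanding $\zeta^{r+\ell}P^{\ell v-u}$ as a Dirichlet convolution of $r+\ell$ copies of~$\1$ and $\ell v-u$ copies of the coefficient sequence of~$P$ — whose support is precisely the integers $\leq y = x^{1/K}$, with values $\d_{-1/v}$ — yields exactly~\eqref{eq:decomposition of d_alpha, positive case} with $c_\ell^+ = -b_\ell$. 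Similarly, taking $\alpha = -r+u/v$ with $r\geq 1$ and $N=r-1$ makes the exponent of~$\zeta$ equal to $\ell-1\geq 0$ and that of $P$ equal to $\ell v+(r-1)v-u$, which gives~\eqref{eq:decomposition of d_alpha, negative case} with $c_\ell^- = -b_\ell$ (for this value of~$N$).

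It remains to bound the coefficients. Since $0\leq u/v<1$, each factor in~\eqref{eq:definition of b_l} satisfies $\abs{j+N-u/v}\leq j+N\leq K+N$, whence $\abs{b_\ell}\leq (K+N)^{K-1}/\bigl((\ell-1)!\,(K-\ell)!\bigr)$, a quantity bounded in terms of $K$ and $N$ only, hence of $K$ and~$r$; this gives $c_\ell^\pm\ll 1$ as claimed. I expect the only point requiring genuine care to be the transition from the polynomial identity to the identity of Dirichlet series: one must verify that substituting the $y$-short series $Y-1$ for~$X$ and discarding the part of degree $\geq K$ loses no information at indices $n\leq x$, and one must keep careful track of the fractional powers of~$\zeta$ so that, after multiplication by $\zeta^\alpha$, the remaining exponents of $\zeta$ are non-negative integers and the number of convolution factors matches the statement — which is exactly what forces the choices $N=0$ and $N=r-1$ in the two cases. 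Everything else is formal bookkeeping once these conventions are in place.
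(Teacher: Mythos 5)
Your proof is correct and follows essentially the same route as the paper: your $P$ is the paper's truncated series $G$, your $Y=\zeta^{1/v}P$ is the paper's choice $X=\zeta(s)^{1/v}G(s)$ in Lemma~\ref{lemma:polynomial identity}, and the two cases come from exactly the same choices $N=0$, $\alpha=r+u/v$ and $N=r-1$, $\alpha=-r+u/v$, with $c_\ell^{\pm}=-b_\ell$. The only difference is that you spell out the shortness/vanishing-of-coefficients argument that the paper leaves implicit in ``comparing the Dirichlet coefficients on both sides.''
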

\begin{proof}
  Let
  \[ G(s) := \sum_{n = 1}^\infty \frac{ \d_{ -1/v }(n) g(n) }{ n^s } \quad \text{with} \quad g(n) := \begin{cases} 1 &\text{if} \quad n \leq x^{1/K}, \\ 0 &\text{otherwise.} \end{cases} \]
  We first look at~\eqref{eq:decomposition of d_alpha, positive case}.
  Here we use Lemma~\ref{lemma:polynomial identity} with~\( N = 0 \) and \( X = \zeta(s)^\frac1v G(s) \), and then multiply both sides by~\( \zeta(s)^{ r + \frac uv } \), which leads to the identity
  \[ \sum_{ K \leq m \leq K v - u } a_m ( \zeta(s)^\frac1v G(s) - 1 )^m \zeta(s)^{ r + \frac uv } = \zeta(s)^{r + \frac uv} + \sum_{ 1 \leq \ell \leq K } b_\ell \zeta(s)^{r + \ell} G(s)^{ \ell v - u }. \]
  Then \eqref{eq:decomposition of d_alpha, positive case} follows by comparing the Dirichlet coefficients on both sides.
  
  In order to show~\eqref{eq:decomposition of d_alpha, negative case}, we use Lemma~\ref{lemma:polynomial identity} with the same~\(X\) as before and with~\( N = r - 1 \), and then multiply both sides by~\( \zeta(s)^{-r + \frac uv} \).
  This gives 
  \begin{multline*}
    \sum_{ K \leq m \leq (K + r - 1) v - u } a_m \left( \zeta(s)^\frac1v G(s) - 1 \right)^m \zeta(s)^{-r + \frac uv} = \zeta(s)^{-r + \frac uv} + \sum_{ 1 \leq \ell \leq K } b_\ell \zeta(s)^{\ell - 1} G(s)^{ \ell v + (r - 1) v - u },
  \end{multline*}
  and \eqref{eq:decomposition of d_alpha, negative case} follows again by comparing the Dirichlet coefficients on both sides.
\end{proof}

\begin{remark}
  With \( r = v = 1 \) and \(u = 0\), the identity~\eqref{eq:decomposition of d_alpha, negative case} leads to the decomposition of~\( \mu(n) \) described in~\cite[(13.38)]{IK04}.
\end{remark}

\subsection{A combinatorial identity of Linnik's type} \label{section: generalization of Linnik's identity}

Here we derive a combinatorial decomposition for~$\d_z$ using an approach analogous to~\cite{Linnik}.

We denote by~$P^+(n)$ the largest, and by~$P^-(n)$ the smallest prime factor of an integer~$n>1$, with the convention that~$P^+(1)=1$ and~$P^-(1)=\infty$.
Given an arbitrary multiplicative function~$f$ and a complex number~$z\in\C$, we define the $z$-fold convolution of~\(f\) as the multiplicative function given by
$$ f^{ (\ast z) }(p^\nu) := \sum_{1\leq r\leq \nu} \binom zr \ssum{\lambda_1, \dotsc, \lambda_r \geq 1 \\ \lambda_1 + \dotsb + \lambda_r = \nu} f(p^{\lambda_1}) \dotsb f(p^{\lambda_r}) \qquad (\nu\geq 1). $$
Note that~$f^{ (\ast z) }(p) = z f(p)$, and that for~$\ell\in\N$ the \(\ell\)-fold convolution as defined here coincides with the \(\ell\)-fold convolution defined in the traditional sense.
We will be eventually interested in the case when~$f=\chi$ is a Dirichlet character, in which case we have~$f^{ (\ast z) } = \d_z^{\chi}$.

\begin{theorem}\label{th:combinatorial-identity-linnik}
  Let~\( K\in\N_{>0}\) and~\( A, x \geq 1 \).
  Then for all~\(z\in\C\) there exist complex numbers~$(c_{\ell})_{0\leq \ell \leq K}$, such that for all~\( x \geq 1 \) and all multiplicative functions~$f$, we have the following identity for~\( n \leq x \),
  \begin{align}
    f^{ (\ast z) }(n) = \sum_{0\leq \ell < K} c_\ell \ssum{n = n_1 n_2 \\ P^+(n_1) \leq x^{1/K}}  f^{ (\ast (z-\ell)) }(n_1) f^{ (\ast \ell) }(n_2),
  \end{align}
  where the coefficients~\( c_\ell \) can be bound by~$c_\ell = O_{K, A}(1)$ uniformly for~$|z|\leq A$.
\end{theorem}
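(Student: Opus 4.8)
The plan is to carry out, rigorously and for an \emph{arbitrary} multiplicative $f$, the heuristic sketched in Section~\ref{sec:overview}, working throughout with formal Dirichlet series so that no convergence is needed. Write $F_p(s):=\sum_{\nu\ge 0}f(p^\nu)p^{-\nu s}$ for the Euler factor of $f$ at $p$, and set $P(s):=\prod_p F_p(s)$, $P_y(s):=\prod_{p\le y}F_p(s)$ and $M(s):=\prod_{p>y}F_p(s)$ with $y:=x^{1/K}$, so that $P(s)=P_y(s)M(s)$. Defining complex powers of a formal Dirichlet series whose coefficient at $n=1$ equals $1$ via the binomial series (equivalently via $\exp(z\log(\,\cdot\,))$), one checks directly from the definition of $f^{(\ast z)}$ that $\sum_n f^{(\ast z)}(n)n^{-s}=P(s)^z$, that $P(s)^z=P_y(s)^zM(s)^z$, and, writing $[n^{-s}]$ for the $n$-th Dirichlet coefficient, that
\[ [n^{-s}]\bigl(P_y(s)^{z-\ell}P(s)^\ell\bigr)=\ssum{n=n_1n_2\\ P^+(n_1)\le y}f^{(\ast(z-\ell))}(n_1)\,f^{(\ast\ell)}(n_2). \]

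The combinatorial input is the elementary polynomial identity
\[ \sum_{0\le j<K}\binom zj (X-1)^j=\sum_{0\le\ell<K}c_\ell X^\ell,\qquad c_\ell:=\sum_{\ell\le j<K}(-1)^{j-\ell}\binom j\ell\binom zj, \]
obtained by expanding each $(X-1)^j$ and swapping the sums; since $\bigl|\binom zj\bigr|\ll_{K,A}1$ for $|z|\le A$ and $j<K$, we get $c_\ell=O_{K,A}(1)$. I would apply this with $X=M(s)$ and combine it with the binomial expansion $M(s)^z=\sum_{j\ge 0}\binom zj(M(s)-1)^j$ to obtain
\[ M(s)^z-\sum_{0\le\ell<K}c_\ell M(s)^\ell=\sum_{j\ge K}\binom zj\bigl(M(s)-1\bigr)^j. \]
Here is the decisive point: $M(s)-1$ is supported on integers $n>1$ with $P^-(n)>y$, hence $(M(s)-1)^j$ is supported on integers exceeding $y^j$, so every term on the right with $j\ge K$ has vanishing $n$-th Dirichlet coefficient as soon as $n\le y^K=x$.

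To finish, multiply the last identity by $P_y(s)^z$ (a formal Dirichlet series with coefficient $1$ at $n=1$, hence invertible) and use $P_y(s)^zM(s)^\ell=P_y(s)^{z-\ell}P(s)^\ell$, which gives
\[ P(s)^z-\sum_{0\le\ell<K}c_\ell P_y(s)^{z-\ell}P(s)^\ell=P_y(s)^z\sum_{j\ge K}\binom zj\bigl(M(s)-1\bigr)^j. \]
Since $P_y(s)^z$ has all its nonzero coefficients at $y$-friable integers $\ge 1$, the $n$-th coefficient of the right-hand side is a finite combination of $d$-th coefficients ($d\mid n$) of the $(M(s)-1)^j$ with $j\ge K$, each of which vanishes for $d\le x$; thus the right-hand side has vanishing $n$-th coefficient for all $n\le x$. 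Comparing $n$-th Dirichlet coefficients on the left for $n\le x$ and inserting the coefficient formulas recalled above yields exactly the claimed identity, with the stated bound on the $c_\ell$. The only points requiring genuine care are the bookkeeping of formal complex powers (so that the $\exp$–$\log$ manipulations and identities such as $(P_yM)^z=P_y^zM^z$ are legitimate) and the support statement for $(M(s)-1)^j$; everything else is a short and routine computation.
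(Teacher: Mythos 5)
Your proposal is correct and follows essentially the same route as the paper's proof: the same truncated binomial expansion $\sum_{j<K}\binom zj(X-1)^j=\sum_{\ell<K}c_\ell X^\ell$ applied to the large-prime factor, the same observation that $(M(s)-1)^j$ has no Dirichlet coefficients at $n\le y^K=x$ for $j\ge K$, and the same recombination $P_y^zM^\ell=P_y^{z-\ell}P^\ell$ before comparing coefficients. The only (minor) difference is that you work with formal Dirichlet series throughout, whereas the paper truncates $f$ at primes $>x$ and argues with absolutely convergent series; your framing neatly sidesteps the convergence bookkeeping but does not change the argument.
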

\begin{proof}
  Let~\( y := x^{1/K} \).
  We may certainly assume that~$f(p^k)$ vanishes if~$p>x$. For~$\Re(s)$ large enough the function~$\log F(s)$ is well defined, and we have~$F(s)^z = \sum_n f^{ (\ast z) }(n) n^{-s}$. Let
  $$ F(s,y) = \prod_{p\leq y}\Big(\sum_{k\geq 0} \frac{f(p^k)}{p^{ks}}\Big), \qquad G(s,y) = \prod_{p > y}\Big(\sum_{k\geq 0} \frac{f(p^k)}{p^{ks}}\Big). $$
  For~$\Re(s)>0$, the decomposition~$F(s) = F(s, y) G(s, y)$ yields
  \begin{align*}
    F(s)^z ={}& F(s, y)^z (1 + (G(s, y) - 1))^z \\
    ={}& F(s, y)^z \sum_{k\geq 0} \binom zk (G(s, y)-1)^k \\
    ={}& F(s, y)^z \sum_{0 \leq k < K} \binom zk (G(s, y)-1)^k + R(s)
  \end{align*}
  with
  \[ R(s) := F(s, y)^z \sum_{k \geq K} \binom zk (G(s, y)-1)^k. \]
  Note that the series converge absolutely if~$\Re(s)$ is large enough in terms of~$f$. By expanding, we get
  $$ F(s)^z = F(s, y)^z \sum_{0 \leq \ell < K} c_\ell G(s, y)^\ell + R(s), $$
  with
  $$ c_\ell := (-1)^\ell \sum_{\ell\leq k < K} (-1)^k \binom zk \binom k\ell. $$
  We read the coefficients of~$n^{-s}$, for~$n\leq x$, on each side. Note that for~$k\geq K$, the series~$(G(s, y)-1)^k$ has no corresponding Dirichlet coefficients, so there there is no contribution from~$R(s)$. The claimed equality follows on writing~$G(s, x^{1/K}) = F(s) F(s, x^{1/K})^{-1}$.
\end{proof}

\begin{remarks}\
  \begin{itemize}[leftmargin=2em]
    \item Compared with~\eqref{eq:decomposition of d_alpha, positive case}--\eqref{eq:decomposition of d_alpha, negative case}, this identity has the significant advantage that it is uniform for~$z\ll 1$ complex.
    \item The case~$K=2$ only involves the exponents~$\ell\in\{0, 1\}$. It follows, for instance, that if~$f^{ (\ast z) }$ satisfies a Siegel-Walfisz estimate (in the sense of~\cite[eq. (1.2)]{GranvilleShao18}), and if~$f$ satisfies a Bombieri-Vinogradov theorem, then~$f^{ (\ast z) }$ satisfies a Bombieri-Vinogradov theorem as well.
    \item The case~$K=2$,~$f=\1$ leads to Eratosthenes' sieve identity: for all~$n\in(\sqrt{x}, x]$, we have
      $$ \1_{n\text{ prime}} = \ssum{d\mid n \\ p\mid d \Rightarrow p\leq \sqrt{x}} \mu(d). $$
      For any~$\eta\in(0,1/2)$, either we have~$d\leq x^\eta$ (which corresponds to type~I sums), or~$d>x^\eta$, in which case we can localize a factor of~$d$ in the interval~$[x^\eta, x^{1/2+\eta}]$ (and this corresponds to type~II sums).
  \end{itemize}
\end{remarks}

The main property which allows Theorem~\ref{th:combinatorial-identity-linnik} to be used in our arguments is the following factorization lemma, in the spirit of Lemma~3.1 of~\cite[p.29]{Vaughan1989}; see~\cite{Hmyrova1964} for an early use of this property, and~\cite{FouvryTenenbaum1996} for an application in a context similar to ours.

\begin{lemma}\label{lem:friable-well-fact}
  For any multiplicative function~$f:\N\to\R$, any compactly supported function~$g:\N\to\C$, and all~$y, w \geq 2$, we have
  \begin{equation} \label{eq:wellfactorization}
    \sum_{P^+(n)\leq y} f(n) g(n) = \Sigma_{\text{triv}} + \Sigma_\text I + \mathcal{O}\left( \Sigma_\text{II} \right),
  \end{equation}
  where
  \begin{align*}
    \Sigma_\text I &= \ssum{n\leq w \\ P^+(n) \leq y} f(n) g(n), \qquad\qquad \Sigma_{\text{triv}} = \ssum{n > w \\ P^+(n) \leq y \\ \exists p^\nu \| n, p^\nu > y} f(n) g(n), \\
    \Sigma_\text{II} &= (\log y) \sup_{\alpha, \beta} \Big| \sum_{w < m \leq yw} \sum_n \alpha_m \beta_n g(mn) \Big|,
  \end{align*}
  the supremum in~\( \Sigma_\text{II} \) being taken over all sequences~$(\alpha_m), (\beta_n)$ of complex numbers satisfying
  $$ |\alpha_m| \leq |f(m)|, \quad |\beta_n| \leq (\abs{f}\ast\abs{f})(n). $$
\end{lemma}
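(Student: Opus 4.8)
The plan is to start from the elementary observation that an integer~$n$ with $P^+(n)\le y$ and $n>w$ either has a prime-power factor exceeding~$y$, or can be factored as $n=mn'$ with $w<m\le yw$. Indeed, sort the prime-power factors $p_1^{\nu_1},\dots,p_r^{\nu_r}$ of~$n$; if all of them are $\le y$, then multiplying them together in order produces at some point a partial product~$m$ which first exceeds~$w$; since the factor that tipped it over is $\le y$, this~$m$ lies in $(w,yw]$. This gives the combinatorial decomposition
\[
\sum_{P^+(n)\le y} f(n)g(n)
=\Sigma_{\mathrm I}+\Sigma_{\mathrm{triv}}
+\sum_{\substack{n>w,\ P^+(n)\le y\\ n\text{ has no }p^\nu\|n,\ p^\nu>y}} f(n)g(n),
\]
and the last sum is what must be bounded by $O(\Sigma_{\mathrm{II}})$.

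The main step is to realize this last sum as a bilinear form with the stated constraints on the coefficients. First I would write each such~$n$ as $n=mn'$ where $m$ is the minimal partial product (in increasing order of prime powers) exceeding~$w$; then $w<m\le yw$ automatically, and $n'$ collects the remaining prime powers. The issue is that this factorization is not unique as a pair $(m,n')$, so one introduces a weight counting the number of such representations. Since $f$ is multiplicative, $f(n)=f(m)f(n')$ whenever $(m,n')=1$ — which holds here because distinct prime powers are assigned entirely to~$m$ or to~$n'$ — and hence $|f(n)|\le |f(m)|\cdot|f(n')|$. The number of ordered factorizations of~$n'$ relevant here is at most $(\mathbf 1\ast\mathbf 1)(n')$-type, but more carefully one checks $|f(n')|\cdot(\text{multiplicity})\le (|f|\ast|f|)(n')$, which is exactly the constraint imposed on $(\beta_n)$. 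Setting $\alpha_m := f(m)$ (truncated to $w<m\le yw$ and $P^+(m)\le y$) and letting $\beta_{n'}$ absorb $f(n')$ together with the counting weight, the sum becomes $\sum_{w<m\le yw}\sum_{n'} \alpha_m\beta_{n'} g(mn')$, with $|\alpha_m|\le|f(m)|$ and $|\beta_{n'}|\le(|f|\ast|f|)(n')$.

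Finally one accounts for the factor $\log y$ in $\Sigma_{\mathrm{II}}$: the decomposition above may over- or under-count, and to straighten this out one typically iterates or partitions according to which prime-power "crossed" the threshold~$w$; summing the resulting $O(\pi(y)/\log y)=O(y/\log^2 y)$-many sub-bilinear-forms, or more cheaply using a dyadic decomposition of the size of~$m$ in $(w,yw]$, introduces a loss of at most $\log y$ (or $\log(yw)/\log w$, which is $\le\log y$ for $w\ge y$, and harmless otherwise after adjusting). Taking the supremum over all admissible coefficient sequences yields the bound $O(\Sigma_{\mathrm{II}})$ as stated. I expect the bookkeeping of the multiplicity weight and the verification that it stays under $(|f|\ast|f|)(n')$ — rather than any analytic input — to be the only delicate point; everything else is a routine rearrangement.
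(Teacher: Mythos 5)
Your combinatorial first step is exactly the paper's: a friable $n>w$ not counted in $\Sigma_{\text{triv}}$ has all its prime powers $p^\nu\| n$ bounded by $y$, and multiplying these prime powers in increasing order of $p$ until the product first exceeds $w$ gives a \emph{unique} factorization $n=n_1n_2$ with $w<n_1\le wQ^+(n_1)\le wy$ and $P^+(n_1)<P^-(n_2)$, hence $(n_1,n_2)=1$ and $f(n)=f(n_1)f(n_2)$. From there, however, your argument misidentifies the difficulty. The factorization is unique (contrary to what you assert), so there is no multiplicity to absorb; the genuine obstacle is the coupling condition $P^+(n_1)<P^-(n_2)$ (equivalently, the requirement that $n_1$ be the minimal initial segment of $n$), which ties the two variables together and prevents you from writing the sum as $\sum_{w<m\le yw}\sum_{n}\alpha_m\beta_n\,g(mn)$ with coefficients each depending on its own variable only. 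Your proposed fixes do not meet this obstacle: a weight counting representations $n=mn'$ with $w<m\le yw$ depends on the pair, i.e.\ on the integer $n$ itself, not on $n'$ alone, so it cannot be pushed into $\beta_{n'}$; a dyadic decomposition of $m\in(w,yw]$ only splits a size condition on $m$ and does nothing to decouple $P^+(m)<P^-(n')$; and splitting according to the prime power that crosses the threshold $w$ would produce on the order of $y/\log y$ subforms, not $O(\log y)$ of them, so your accounting of the factor $\log y$ is also incorrect.

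What is actually needed --- and what the paper does in one line --- is an analytic separation-of-variables step: after the unique factorization, the condition $P^+(n_1)<P^-(n_2)$ is removed by the standard separation device (the paper invokes Lemma~13.11 of Iwaniec--Kowalski), and it is precisely this step that produces the factor $\log y$ in $\Sigma_{\text{II}}$ while keeping $|\alpha_m|\le |f(m)|$ and $|\beta_n|\le(|f|\ast|f|)(n)$; note that the bound on $\beta$ is not where the convolution is essential, since $|f(n)|\le(|f|\ast|f|)(n)$ anyway because $f(1)=1$. Without some such decoupling mechanism, the bilinear form you write down simply does not represent the sum to be bounded, so the proposal has a genuine gap at its central step.
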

\begin{proof}
  In an integer~$n$ with~$P^+(n)\leq y$ is not counted in the first two sums on the right-hand-side, then~$n>w$ and all prime powers~$p^\nu\| n$ satisfy~$p^\nu \leq y$. By incorporating these prime powers as~$p$ increases, we may factor~$n = n_1 n_2$ uniquely in such a way that
  $$ P^+(n_1) < P^-(n_2), \quad w < n_1 \leq w Q^+(n_1), $$
  where~$Q^+(n_1)$ is the prime power corresponding to the largest prime of~$n_1$: $Q^+(n_1) = P^+(n_1)^\nu \| n_1$. Note that this implies~$(n_1, n_2)=1$. Our statement follows after separating variables~\cite[Lemma~13.11]{IK04} in the condition~$P^+(n_1) < P^-(n_2)$.
\end{proof}

\section{Auxiliary estimates} \label{section: auxiliary estimates}

In this section we collect some estimates on~\( \Delta_h(n, R) \), which will be needed in the following sections.

\subsection{The second moment of~\texorpdfstring{\( \Delta_h(n; R) \)}{Delta\_h(n; R)}}

On several occasions, we will require the following rough upper-bound for the ``main terms''.
\begin{lemma} \label{lem:rough-bound-sum-d-dt}
  For~$x\geq 3$, \( R \geq 1 \) and $(a, h)\in\Z^2$ such that~$1\leq a, |h|, R \leq x^{1/4}$, the following estimate holds,
  $$ \sum_{ \frac x2 < n \leq x } | \Delta_h(an; R) |^2 \ll \d((a, h))^2 x(\log x)^4. $$
\end{lemma}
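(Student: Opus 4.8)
The plan is to expand the square, open up the definition of $\Delta_h(an;R) = \tau(an-h) - \dt_h(an;R)$, and bound the three resulting pieces separately. First, by the elementary inequality $|\Delta_h|^2 \ll |\tau(an-h)|^2 + |\dt_h(an;R)|^2$, it suffices to show that each of
\[
\sum_{\frac x2 < n \leq x} \tau(an-h)^2 \quad\text{and}\quad \sum_{\frac x2 < n \leq x} |\dt_h(an;R)|^2
\]
is $\ll \tau((a,h))^2 x (\log x)^4$. For the first sum, write $m = an - h$, so $m$ runs over an interval of length $\ll x$ inside an arithmetic progression modulo $a$ with $m \equiv -h \pmod a$; every such $m$ is divisible by $(a,h)$, so writing $m = (a,h) m'$ and $a = (a,h) a'$, $h = (a,h) h'$ with $(a',h')=1$, the values of $m'$ run over an interval of length $\ll x/(a,h)$ in a fixed progression mod $a'$. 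Using $\tau((a,h)m') \leq \tau((a,h))\tau(m')$ and the standard bound $\sum_{m' \leq Y,\, m' \equiv c \,(a')} \tau(m')^2 \ll_{\ee} (a')^{\ee} Y (\log Y)^3$ (which follows from $\tau^2 = \mathbf{1} * \tau_3$ and a divisor-switching argument), we obtain a bound of the required shape, the $a'^\ee$ being absorbed since $a' \leq x^{1/4}$.

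For the second sum, the natural route is to use the representation $\dt_h(an;R) = \sum_{q \leq \sqrt{an-h}} c_q(an)$ where, after unfolding the character sum in~\eqref{eq:def-dthR}, $c_q(an)$ is (a multiple of) a Ramanujan-type sum detecting the congruence $an \equiv h \pmod q$ weighted by $1/\varphi(q/(q,h))$ and the conductor truncation $\cond(\chi)\leq R$. In fact it is cleaner to recognize $\dt_h(n;R)$ as a truncated divisor-type sum: for $R$ large it equals $\tau(n-h)$, and in general $\dt_h(n;R) = \sum_{d \mid n-h,\, \min(d,(n-h)/d) \text{ has no large ``conductor'' obstruction}} 1 + (\text{small})$; more robustly, one can bound $|\dt_h(an;R)| \leq 2\sum_{q \leq \sqrt{an-h},\, q \equiv 0 \text{ compatibly}} \mathbf{1}[an \equiv h \bmod q] \cdot (q,h)/\varphi(q/(q,h)) \cdot \pi(R)$ after bounding the inner character sum trivially by its length. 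Squaring and summing over $n$, one gets a sum over pairs $q_1, q_2$ of the count of $n$ with $an \equiv h \pmod{[q_1,q_2]}$; the $(a,h)$ divisibility constraint again produces the factor $\tau((a,h))^2$, the diagonal-type main term contributes $\ll x(\log x)^2$ from $\sum_q \tfrac{(q,h)^2}{\varphi(q/(q,h))^2}\cdot\tfrac{x}{q}$, and the off-diagonal is lower order. The factor $\pi(R)^2 \leq R^2 \leq x^{1/2}$ is harmless only if one is more careful — so in practice one keeps the character cancellation and treats $\dt_h$ as genuinely close to $\tau(n-h)$, giving $\sum |\dt_h|^2 \ll \tau((a,h))^2 x(\log x)^3$ directly by the same divisor-moment estimate applied to $\sum \tau(an-h)^2$ together with the trivial pointwise bound $|\dt_h(n;R)| \leq \tau(n-h) + O(R^{1+\ee}n^\ee)$ — but this last error, squared and summed, is only $O(R^{2+\ee}x^{1+\ee}) = O(x^{1-\delta})$, absorbed into the main term.

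The main obstacle is the second sum: one must be careful that the crude trivial bound on the truncated character sum in~\eqref{eq:def-dthR} does not introduce a factor like $R^2$ that would overwhelm $x(\log x)^4$. The cleanest fix is to first observe $\dt_h(n;R) = \dt_h(n;\sqrt{n-h})$ whenever $R \geq \sqrt{n-h}$ and to exploit that $\dt_h(n;\sqrt{n-h})$ differs from $\tau(n-h)$ only by the $\mathbf{1}[n-h=\square]$ correction, hence for general $R \leq x^{1/4}$ one splits the $q$-sum at $R$: the part $q \leq R$ contributes a genuine (partial) divisor sum bounded by $\tau(n-h)$, and the part $R < q \leq \sqrt{n-h}$ must be handled by retaining the character orthogonality $\sum_{\cond(\chi)\leq R}$ and summing over $n$ first, where the resulting incomplete-character / Ramanujan sums over $n$ in an interval give cancellation. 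In all cases the $(a,h)$-dependence is tracked exactly as in the $\tau(an-h)^2$ computation, yielding the stated $\tau((a,h))^2 x(\log x)^4$; the extra power of $\log x$ compared to the $(\log x)^3$ in the pure divisor moment is the safety margin absorbing these truncation terms.
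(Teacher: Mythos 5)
Your opening reduction to the two sums $\sum_n \tau(an-h)^2$ and $\sum_n |\dt_h(an;R)|^2$ is exactly the paper's first step, but your treatment of the second sum has a genuine gap. The route you ultimately assert works --- the pointwise bound $|\dt_h(n;R)| \le \tau(n-h) + O(R^{1+\ee}n^{\ee})$ with the squared error ``only $O(R^{2+\ee}x^{1+\ee}) = O(x^{1-\delta})$'' --- fails twice. The pointwise domination of $\dt_h$ by $\tau(n-h)$ up to a small error is not justified: the conductor-truncated character sums in \eqref{eq:def-dthR} approximate $\tau(n-h)$ only on average, and the only pointwise bound available is the trivial $\ll R^{1+\ee}n^{\ee}$. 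Worse, $R^{2+\ee}x^{1+\ee}$ is never $O(x^{1-\delta})$ --- it is at least $x$, and as large as $x^{3/2+\ee}$ when $R$ is near the permitted $x^{1/4}$ --- so the error cannot be absorbed. Your fallback sketch (split the $q$-sum at $R$, then ``retain orthogonality and sum over $n$ first for cancellation'') omits the decisive structural point: after squaring, the inner $n$-sum carries the character $\overline{\chi_1}\chi_2$ for pairs $\chi_1,\chi_2$ of conductor $\le R$, and when the two are induced by the \emph{same} primitive character there is no cancellation whatsoever; these diagonal pairs are the true main contribution and must be bounded trivially together with the $\sum 1/\varphi$ weights to give $\ll \tau((a,h))^2 x(\log x)^4$, while P\'olya--Vinogradov is reserved for the non-matching pairs, whose total is $\ll x^{\ee}R^3 \ll x^{3/4+\ee}$ precisely because $R \le x^{1/4}$. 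This diagonal/off-diagonal classification is the content of the paper's estimate of $G_2$ and is absent from your write-up (your ``Ramanujan sums over $n$'' do not appear; only Dirichlet character sums do).

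On the first sum your approach is in substance the paper's (divisor moments in progressions), but the bookkeeping as written does not close: as $n$ runs over $(x/2,x]$, the variable $m' = (an-h)/(a,h)$ runs over an interval of length $\asymp a'x$ with $a' = a/(a,h)$, not $\ll x/(a,h)$, in the class $-h' \bmod a'$ with $(h',a') = 1$. Feeding the correct length into the bound you quote, $\sum_{m' \le Y,\, m' \equiv c\, (a')} \tau(m')^2 \ll_{\ee} (a')^{\ee} Y(\log Y)^3$, yields $(a')^{1+\ee}x(\log x)^3$, too large by a factor $a'$ (which can be $x^{1/4}$). You need the Shiu/Barban--Vehov form with the $1/a'$ saving, as in the paper; with that correction, and $\tau((a,h)m') \le \tau((a,h))\tau(m')$, this half of the argument is fine --- as written, your two misstatements merely compensate each other.
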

\begin{proof}
  We have
  \[ \sum_{ \frac x2 < n \leq x } \left| \Delta_h(an; R) \right|^2 \ll \sum_{ \frac x2 < n \leq x } \d(an-h)^2 + \sum_{ \frac x2 < n \leq x } |\dt_h(an;R)|^2 =: G_1 + G_2, \]
  and we now proceed to estimate the two sums \( G_1 \)~and~\( G_2 \) separately.
  
  We first look at~\(G_1\).
  For notational convenience, let
  \[ a' := \frac a{ (a, h) }, \quad h' := \frac h{ (a, h) } \quad \text{and} \quad t := (a, h). \]
  We start by splitting the sum according to the size of~\( t^\ast = ( an - h, t^\infty ) \) as follows,
  \begin{align*}
    G_1 &= \sum_{ \substack{ t^\ast \mid t^\infty \\ (t^\ast, a') = 1 \\ t^\ast \leq x^{1/2} } } \sum_{ \substack{ \frac x2 < n \leq x \\ a' n \equiv h' \bmod t^\ast \\ ( \frac{ a' n - h' }{ t^\ast }, t ) = 1 } } \d( a n - h )^2 + \sum_{ \substack{ t^\ast \mid t^\infty \\ (t^\ast, a') = 1 \\ t^\ast > x^{1/2} } } \sum_{ \substack{ \frac x2 < n \leq x \\ a' n \equiv h' \bmod t^\ast \\ ( \frac{ a' n - h' }{ t^\ast }, t ) = 1 } } \d( a n - h )^2
      =: G_{1 \text a} + G_{1 \text b}.
  \end{align*}
  In order to estimate~\( G_{1 \text a} \) we choose \( b, y \in \Z \) such that~\( a' b = 1 + y t^\ast \) and write
  \begin{align*}
    G_{1 \text a} &= \sum_{ \substack{ t^\ast \mid t^\infty \\ (t^\ast, a') = 1 \\ t^\ast \leq x^{1/2} } } \d(t^\ast t)^2 \sum_{ \substack{ \frac{ x - 2 b h' }{ 2 t^\ast } < n' \leq \frac{ x - b h' }{ t^\ast } \\ ( y h' + n' a', t ) = 1 } } \d( y h' + n' a' )^2 \\
      &\leq \sum_{ \substack{ t^\ast \mid t^\infty \\ (t^\ast, a') = 1 \\ t^\ast \leq x^{1/2} } } \d(t^\ast t)^2 \sum_{ \substack{ \frac{ a' x - 2 h' }{ 2 t^\ast } < m \leq \frac{ a' x - h' }{ t^\ast } \\ m \equiv y h' \bmod a' } } \d( m )^2.
  \end{align*}
  The sum over~\(m\) can now be estimated via~\cite[Theorem~2]{Shiu} or~\cite[Theorem~1]{BaVe}, which leads to
  \begin{equation} \label{eq: bound for G_1a}
    G_{1 \text a} \ll x \log^3 x \sum_{ \substack{ t^\ast \mid t^\infty \\ t^\ast \leq x^{1/2} } } \frac{ \d(t^\ast t)^2 }{t^\ast} \ll \d( (a, h) )^2 x \log^4 x.
  \end{equation}
  In~\( G_{1 \text b} \) we bound all the summands trivially and get
  \[ G_{1 \text b} \ll \sum_{ \substack{ t^\ast \mid t^\infty \\ (t^\ast, a') = 1 \\ t^\ast > x^{1/2} } } \sum_{ \substack{ \frac x2 < n \leq x \\ a' n \equiv h' \bmod t^\ast } } \d( t ( a' n - h' ) )^2 \ll x^{1 + \varepsilon} \sum_{ \substack{ t^\ast \mid t^\infty \\ x^{1/2} < t^\ast \leq 2 a' x } } \frac1{ t^\ast } \ll x^{ \frac34 + \varepsilon }, \]
  so that together with~\eqref{eq: bound for G_1a} we deduce
  \[ G_1 \ll \d( (a, h) )^2 x \log^4 x. \]
  
  Next we look at~\( G_2 \).
  Here we first rewrite~\( \dt_h(an; R) \) as
  \[ \dt_h(an; R) := 2 \ssum{ \alpha \mid (a, h) } \ssum{ \delta \mid ( \frac h{ \alpha }, n ) \\ ( \delta, \frac a\alpha ) = 1 } \ssum{ q \leq \frac{ \sqrt{an - h} }{ \alpha \delta } } \frac1{ \vphi(q) } \ssum{ \chi \mod q \\ \cond(\chi) \leq R } \chi( \tfrac h{\alpha \delta} ) \overline{ \chi( \tfrac{an}{ \alpha \delta } ) }, \]
  so that after expanding the square we are led to 
  \begin{equation*}
    G_2 \leq 4 \ssum{ \alpha_1, \alpha_2 \mid (a, h) \\ \delta_1 \mid \frac h{ \alpha_1 }, \,\, \delta_2 \mid \frac h{ \alpha_2 } } \ssum{ q_1 \leq \frac{ \sqrt{ax - h} }{ \alpha_1 \delta_1 } \\ q_2 \leq \frac{ \sqrt{ax - h} }{ \alpha_2 \delta_2 } } \frac1{ \vphi(q_1) \vphi(q_2) } \ssum{ \chi_1 \mod{q_1} \\ \chi_2 \mod{q_2} \\ \cond(\chi_1), \cond(\chi_2) \leq R } S\left( \overline{\chi_1} \chi_2, \tfrac x{ [\delta_1, \delta_2] } \right),
  \end{equation*}
  with
  \[ S(\chi, y) := \max_{ \frac y2 \leq y_0 < y } \bigg| \ssum{ y_0 < n \leq y } \chi(n) \bigg|. \]
  If~$\chi_1$ and~$\chi_2$ are induced by the same primitive character, we use the trivial bound~$S\left( \overline{\chi_1} \chi_2, y \right) \leq y$. Otherwise, the P\'olya-Vinogradov bound applies and~$S\left( \overline{\chi_1} \chi_2, y \right) \ll \d(q_1q_2) R \log R$. Inserting these bounds, we eventually obtain
  $$ G_2 \ll \d( (a, h) )^2 x \log^4 x + x^\varepsilon R^3 \ll \d( (a, h) )^2 x \log^4 x $$
  by our assumption~$R\leq x^{\frac14}$. This concludes the proof.
\end{proof}

\subsection{Comparison of main terms}

We begin by two technical lemmas related to the main terms that will appear later.
Let~\( X \geq 1 \), and let~$f$ and~$v$ be two smooth functions which are both compactly supported inside~$\R_+^*$.
We assume that \( \operatorname{supp} f \subset [C_1 X, C_2 X] \), where \(C_1\)~and~\(C_2\) are some fixed constants, and that for some~$\Omega\in (0, 1]$, we have
$$ \|v^{(j)}\|_\infty \ll_j 1, \qquad  \|f^{(j)}\|_\infty \ll_j (\Omega X)^{-j} \qquad \int_\R |f^{(j+1)}| \ll (\Omega X)^{-j}, $$
for all~$j\geq 0$.
Furthermore, we define
\begin{equation} \label{eq:def-main-term-smooth}
  M_{f, v}(b, h; M) := \frac1b \sum_{d \mid b} \frac{ c_d(h) }d \int \! ( \log(\xi - h) + 2\gamma - 2 \log d ) f(\xi) v\left( \frac\xi{bM} \right) \, d\xi,
\end{equation}
where
\[ c_d(h) := \sum_{\nu\mods{d}}\e(\nu h/d) = \sum_{\delta|(h, d)} \delta \mu(d/\delta) \]
denotes the Ramanujan sum.

\begin{lemma}\label{lem:compare-MTsmooth-dt}
  For~$(b, h)\in\Z^2$, $b, M\geq 1$, and~$R\geq 1$, we have
  $$ \sum_m f(bm) v\Big(\frac mM\Big) \dt_h(bm; R) = M_{f, v}(b, h; M) + \O\Big(X^\ee R^{3/2} + X^{1/2+\ee} \frac{(h, b)}{b}\Big). $$
\end{lemma}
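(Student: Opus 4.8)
The statement compares a smoothed sum $\sum_m f(bm)v(m/M)\dt_h(bm;R)$ against the explicit main term $M_{f,v}(b,h;M)$, so the natural strategy is to open up the definition~\eqref{eq:def-dthR} of $\dt_h$, swap the order of summation to put the sum over $m$ innermost, and evaluate that inner sum by standard contour integration / Fourier-analytic methods. First I would write $\dt_h(bm;R)$ using its expansion into Ramanujan-type sums: after the change of variables $q \mapsto q/(h,q)$ and pulling out the gcd conditions, the sum over $m$ becomes, for each modulus $q$ with $(bm,q)=(h,q)$, a smooth sum of $\chi(bm/(h,q))$ weighted by $f(bm)v(m/M)$ over $m$ in an arithmetic progression. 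The condition $\cond(\chi)\le R$ restricts $q\le R^{1+\ee}$-ish after accounting for the conductor, but more precisely I would split the sum over $q$ according to whether $q$ is small (say $q\le Q_0$ for a suitable threshold related to $R$) where we extract a main term, versus the tail which is absorbed into the error.

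The key computation is: for $q$ small and the principal character, $\sum_{m \equiv m_0 (q),\, (m,q')=1} f(bm)v(m/M)$ equals (by Poisson summation or Mellin inversion, keeping only the zero frequency) a main term $\frac{1}{\varphi(q)}\cdot\frac{1}{bM}\int f(\xi)v(\xi/(bM))\,d\xi$ times suitable local densities, plus a rapidly decaying error controlled by the smoothness hypotheses $\|f^{(j)}\|_\infty\ll (\Omega X)^{-j}$; for non-principal $\chi$ the whole sum is negligible by the same smoothness together with the completed character sum being $O(1)$, or rather by partial summation against $S(\chi,\cdot)$ as in the proof of Lemma~\ref{lem:rough-bound-sum-d-dt}. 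The arithmetic factor one gets upon summing over $q\mid$ (and the divisors $\delta$ of $h$ that appear) should, after a standard Euler-product manipulation, collapse to exactly the Ramanujan sum $c_d(h)$ and the divisor-sum structure $\frac1b\sum_{d\mid b}\frac{c_d(h)}{d}$ in~\eqref{eq:def-main-term-smooth}. The term $\log(\xi-h)+2\gamma-2\log d$ appears because the sum over $q\le\sqrt{bm-h}/(\text{stuff})$ of $1/\varphi(q)$ — weighted correctly — produces $\log$ of the endpoint plus the Euler--Mascheroni-type constant $2\gamma$, with the $-2\log d$ coming from the rescaling of the $q$-range by the divisor $d$; this is the classical evaluation underlying $\dt_h$ being an approximation to $\tau(n-h)=\sum_{q\le\sqrt{n-h}}(\cdots)$ where $\tau$ on average over the progression produces exactly this logarithmic main term.

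For the error terms: the contribution of $q$ in the truncated tail (conductors between $Q_0$ and $R$, or imprimitive $q$ of the problematic type) is bounded by $O(X^\ee R^{3/2})$ — this is where the $R^{3/2}$ exponent enters, presumably from trivially bounding $\sum_{q\le R^{1+\ee}} q^{1/2}$-type sums arising from the P\'olya--Vinogradov step, analogously to the $R^3$ that appeared after squaring in Lemma~\ref{lem:rough-bound-sum-d-dt}. The term $X^{1/2+\ee}(h,b)/b$ comes from the boundary/diagonal contribution where $bm-h$ is close to a perfect square or where the range $q\le\sqrt{bm-h}/(\alpha\delta)$ has $m$-dependent endpoints, forcing an error of size roughly $\sqrt{X}$ times the density $(h,b)/b$ of the relevant progression. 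I would handle the $m$-dependence of the summation range $q\le\sqrt{bm-h}/(\alpha\delta)$ by first fixing $q$ and letting $m$ range over $m\ge (q\alpha\delta)^2/b + h/b$, i.e. swapping so the $q$-cutoff becomes a cutoff on $m$ from below, which is clean since $v$ is compactly supported in $\R_+^*$.

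**Main obstacle.** The genuinely delicate point is getting the \emph{exact} shape of the main term $M_{f,v}(b,h;M)$ — in particular reproducing the $c_d(h)/d$ weights and the precise constant $2\gamma-2\log d$ — rather than just an asymptotic of the right order. This requires carefully tracking how the double structure (sum over $\alpha\mid(a,h)$, $\delta\mid (h/\alpha, n)$, and $q$) in the expanded $\dt_h$ recombines: the interplay between the gcd conditions $(bm,q)=(h,q)$ and the conductor truncation has to be disentangled so that summing the principal-character contributions over all admissible $q$ with the weight $1/\varphi(q)$ yields precisely $\log$ of the endpoint. I expect this bookkeeping — essentially an Euler-product/Möbius-inversion identity showing $\sum_{q}(\text{local factors})/\varphi(q) = \sum_{d\mid b}c_d(h)/d\cdot(\log\text{-term})$ — to be the crux, with everything else being routine smoothing estimates. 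A clean way to organize it is to compare directly with the unsmoothed identity $\tau(n-h)=\sum_{d\mid n-h}1$ summed over the progression: the right-hand side of Lemma~\ref{lem:compare-MTsmooth-dt} is manifestly the smoothed main term for $\sum_m f(bm)v(m/M)\tau(bm-h)$, so one reduces to showing $\dt_h$ reproduces that main term, which is almost by construction of $\dt_h$ as the truncation of the divisor-switching expansion of $\tau(n-h)$.
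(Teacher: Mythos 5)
Your overall skeleton does match the paper's proof: expand the definition \eqref{eq:def-dthR}, remove the non-principal characters of conductor $\leq R$ by partial summation plus P\'olya--Vinogradov (this is exactly where the $X^{\ee}R^{3/2}$ term comes from), replace the $m$-sum in the remaining principal-character piece by an integral using the smoothness of $f$ and $v$, and recombine the $q$-sum through $\sum_{q\leq x,\,(b,q)\mid h}(b,q)/q=\sum_{d\mid b}\tfrac{c_d(h)}{d}H(x/d)$ with $H(x)=\log x+\gamma+\O(1/x)$, which produces both $\log(\xi-h)+2\gamma-2\log d$ and the $X^{1/2+\ee}(h,b)/b$ error.

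There is, however, one genuine flaw in the plan as written. You claim that the condition $\cond(\chi)\leq R$ ``restricts $q\leq R^{1+\ee}$-ish'' and you propose to extract the main term only from $q\leq Q_0$ with $Q_0$ related to $R$, absorbing the larger $q$ into the error. This conflates the modulus with the conductor: in \eqref{eq:def-dthR} the modulus $q$ runs all the way up to $\sqrt{bm-h}\asymp\sqrt{X}$, and the principal character (conductor $1\leq R$) occurs for \emph{every} such $q$. The principal-character contribution from $Q_0<q\leq\sqrt{\xi-h}$ has size comparable to $(X/b)\log(\sqrt{X}/Q_0)$, i.e.\ the same order as the main term itself, and cannot be absorbed into $X^{\ee}R^{3/2}+X^{1/2+\ee}(h,b)/b$ (recall $R\leq x^{\delta}$). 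It is precisely this full range of moduli that turns $H(\sqrt{\xi-h}/d)$ into the $\log(\xi-h)$ appearing in $M_{f,v}(b,h;M)$; truncating at $Q_0$ would produce $\log Q_0^2$ instead. The correct dichotomy, which you also gesture at, is principal versus non-principal character with no truncation in $q$ at all. A smaller imprecision: the constant $2\gamma$ does not come from $\sum_{q\leq x}1/\varphi(q)$ (whose leading constant is $\zeta(2)\zeta(3)/\zeta(6)$, not $1$); it comes from $\sum_{q\leq x,\ d\mid q}1/q$, after the density of $m$ in the relevant progression cancels the factor $\varphi(q/(h,q))^{-1}$ and leaves the weight $(b,q)/q$, which is then resolved into $\sum_{d\mid b}c_d(h)/d$. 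With these two points repaired, the rest of your outline (smoothing errors, the origin of $X^{1/2+\ee}(h,b)/b$ from the endpoint term $\O(1/x)$ in $H$, and the bound $|c_d(h)|\leq(d,h)$) goes through as in the paper.
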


\begin{proof}
Recall the definition~\eqref{eq:def-dthR}. By partial summation and the P\'olya-Vinogradov inequality, we have
$$ \sum_m f(bm) v\Big(\frac mM\Big) \dt_h(bm; R)  = 2\sum_m f(bm)v\Big(\frac mM\Big) \ssum{q\leq \sqrt{bm-h} \\ (bm, q) = (h, q)} \frac1{\vphi(\frac q{(q,h)})} + \O(X^\ee R^{3/2}). $$
The condition~$(bm, q) = (h, q)$ in the sum on the right-hand side is equivalent to
$$ (b, q)|h, \quad \frac{(h, q)}{(b, q)}\Big\vert m, \quad \Big(\frac{m(b, q)}{(h, q)}, \frac{q}{(h, q)}\Big) = 1. $$
Using Möbius inversion and our hypotheses on~$f$ and~$v$, we can replace the~$m$-sum by the corresponding integral and obtain
$$ \sum_m f(bm) v\Big(\frac mM\Big) \dt_h(bm; R) = \frac2b \int f(\xi)v\Big(\frac\xi{bM}\Big) \ssum{q\leq\sqrt{\xi-h} \\ (b, q)|h} \frac{(b, q)}q\dd \xi + \O(X^\ee R^{3/2}). $$
The main term on the right-hand side may be rewritten as
$$ \frac2b \sum_{d|b} \frac{c_d(h)}{d} \int f(\xi)v\Big(\frac{\xi}{bM}\Big) H\Big(\frac{\sqrt{\xi-h}}d\Big) \dd \xi + \O(X^\ee R^{3/2}) $$
where~$H(x) = \sum_{q\leq x}1/q = \log x + \gamma + \O(x^{-1})$. This gives the claimed estimate.
\end{proof}

Next, we define
\begin{equation}
  M_{f, v}^\chi(b, h; M) := \ssum{a\bmod{D} \\ (a, D)=1} \chi(a) M_{f_{ab}, v_{a/M}}(bD, h-ab; M/D)\label{eq:def-MTsmooth-chi}
\end{equation}
where~$f_{ab}(\xi) := f(\xi + ab)$ and~$v_{a/M}(\xi) := v(\xi+a/M)$.
\begin{lemma}\label{lem:compare-MTsmooth-chi-dt}
  If~$b=b^\circ b^*$ with~$b^\circ | D^\infty$ and~$(b^*, D)=1$, then
  \begin{equation}
    M_{f,v}^\chi(b,h;M) = \frac1{bD}\chi\Big(\frac{h}{(h, b)}\Big) {\bar\chi}\Big(\frac{b}{(h, b)}\Big) \sum_{d|b^*} \frac{c_d(h)}{d}
    \int \Big(\log\Big(\frac{\xi-h}{(Db^\circ d)^2}\Big) + 2\gamma\Big) f(\xi)v\Big(\frac{\xi}{bM}\Big)\dd\xi.\label{eq:rewrite-Mfvchi}
  \end{equation}
  Moreover, if~$\chi\bmod{D}$ is primitive, we have
  \begin{align*}
    \sum_m f(bm) &{} v\Big(\frac mM\Big) \chi(m) \dt_h(bm; R) \\
    {}& = M_{f, v}^\chi(b, h; M) + \O\Big(\1_{D>R}(b, h)\frac{X(\log X)^3}{bD} + X^\ee D^{1/2}R^{3/2} + X^{1/2+\ee} \frac{(h, b^*)}{b^*}\Big)
  \end{align*}
  where~$\1_{D> R}=1$ if~$D>R$ and~$0$ otherwise.
\end{lemma}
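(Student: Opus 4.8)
The plan is to prove the two assertions separately: the closed form \eqref{eq:rewrite-Mfvchi} is a formal computation from the definitions, while the approximation statement follows the proof of Lemma~\ref{lem:compare-MTsmooth-dt}, the only genuinely new feature being the twist by~$\chi$, which inflates the moduli of the character sums involved.

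For \eqref{eq:rewrite-Mfvchi}, one substitutes the definition \eqref{eq:def-main-term-smooth} into \eqref{eq:def-MTsmooth-chi} and, in each resulting integral, changes variables $\xi\mapsto\xi-ab$. This removes the shifts $f_{ab}$ and $v_{a/M}$ (using $v((\xi-ab)/(bM)+a/M)=v(\xi/(bM))$ and $\log(\xi-ab-(h-ab))=\log(\xi-h)$), so the integrals no longer depend on~$a$, and one is reduced to evaluating, for each $d\mid bD$, the twisted Ramanujan sum $S(d):=\sum_{(a,D)=1}\chi(a)\,c_d(h-ab)$. Writing $c_d(h-ab)$ as a sum of additive characters $\e(\cdot/d)$ and exchanging summations, the inner sum over~$a$ becomes a Gauss-type sum $\sum_{(a,D)=1}\chi(a)\,\e(ta/q_0)$ with $q_0=d/(d,b)$ and $(t,q_0)=1$; since $\chi$ has modulus~$D$, this vanishes unless $q_0=D$, which, given the factorisation $b=b^\circ b^*$, happens exactly when $d=Db^\circ d'$ with $d'\mid b^*$. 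For such~$d$, a standard Gauss sum evaluation gives $S(Db^\circ d')=Db^\circ\,\chi(h/(h,b))\,\overline{\chi}(b/(h,b))\,c_{d'}(h)$, and \eqref{eq:rewrite-Mfvchi} follows after regrouping the $d$-sum and collecting the terms $\log(\xi-h)$, $2\gamma$ and $-2\log d=-2\log(Db^\circ)-2\log d'$.

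For the approximation, expand $\dt_h(bm;R)$ via \eqref{eq:def-dthR} and retrace the proof of Lemma~\ref{lem:compare-MTsmooth-dt} with the factor $\chi(m)$ carried along. One is led to a sum over~$q$ and over characters $\chi'\bmod q/(h,q)$ with $\cond(\chi')\le R$, in which the inner sum over~$m$ carries the character $\chi\overline{\chi'}$ of modulus dividing $[D,\,q/(h,q)]$ together with the condition $(bm,q)=(h,q)$. When $\chi\overline{\chi'}$ is non-principal, partial summation and P\'olya--Vinogradov bound the $m$-sum by $\ll\cond(\chi\overline{\chi'})^{1/2+\ee}$, and summing over~$q$ and~$\chi'$ — after the usual regrouping of the $\chi'$ according to their conductor, to avoid over-counting — these terms contribute $\ll X^{\ee}D^{1/2}R^{3/2}$, the extra factor $D^{1/2}$ (relative to Lemma~\ref{lem:compare-MTsmooth-dt}) coming precisely from the conductor of~$\chi$. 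Since $\chi$ is primitive modulo~$D$, the character $\chi\overline{\chi'}$ is principal only when $\chi'$ is induced by~$\chi$, which requires $\cond(\chi')=D$. If $D>R$ this never happens, so no main term survives, the whole sum is $\ll X^{\ee}D^{1/2}R^{3/2}$ up to a negligible principal-character contribution, and one checks from \eqref{eq:rewrite-Mfvchi} (using $|c_d(h)|\le(d,h)$ and $\sum_{d\mid b^*}1/d\ll b^*/\varphi(b^*)$) that $M_{f,v}^\chi(b,h;M)\ll(b,h)\,X(\log X)^3/(bD)$, so that the claimed estimate holds with the term $\1_{D>R}(b,h)X(\log X)^3/(bD)$. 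If $D\le R$, the main term comes from the terms with $\chi'$ induced by~$\chi$; there $D\mid q/(h,q)$, and non-vanishing of $\overline{\chi'(bm/(h,q))}$ additionally forces the $D$-part $b^\circ$ of~$b$ to divide $(h,q)$. Opening $(bm,q)=(h,q)$ by M\"obius inversion exactly as in Lemma~\ref{lem:compare-MTsmooth-dt}, replacing the $m$-sum by the corresponding integral using the smoothness of $f$ and~$v$ — at the cost of $X^{1/2+\ee}(h,b^*)/b^*$, the analogue of the corresponding term there with $b$ replaced by its $D$-coprime part~$b^*$ — and performing the remaining sum over~$q$ (which by the same Ramanujan/Gauss-sum computation as in the first part produces the factor $Db^\circ$ inside the logarithm of \eqref{eq:rewrite-Mfvchi} and the value $\chi(h/(h,b))\overline{\chi}(b/(h,b))$) recovers exactly $M_{f,v}^\chi(b,h;M)$.

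The analytic inputs — P\'olya--Vinogradov, partial summation, and replacement of sums by integrals via the smoothness of $f$ and~$v$ — are standard and used just as in Lemma~\ref{lem:compare-MTsmooth-dt}. I expect the main obstacle to be the combinatorial bookkeeping: isolating, among the characters in \eqref{eq:def-dthR}, those whose twist by~$\chi$ is principal (the source of the $D\le R$ versus $D>R$ dichotomy, hence of the factor $\1_{D>R}$), and then checking that the surviving sum over~$q$ together with the M\"obius treatment of the gcd conditions reassembles into the precise shape \eqref{eq:rewrite-Mfvchi} — in particular that the factor $(Db^\circ d)^2$ in the logarithm and the restriction $d\mid b^*$ come out correctly from the interplay between~$q$, the modulus~$D$, and the factorisation $b=b^\circ b^*$. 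Keeping the various gcd's with~$h$ (namely $(b,h)$ and $(h,b^*)$) under control throughout is routine but must be done carefully.
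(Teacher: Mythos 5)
Your proposal is correct and follows essentially the same route as the paper: the identity \eqref{eq:rewrite-Mfvchi} is obtained by shifting $\xi\mapsto\xi-ab$ and evaluating $\sum_{(a,D)=1}\chi(a)c_d(h-ab)$ by Gauss sums (your condition $d/(d,b)=D$ is equivalent to the paper's $Db^\circ\mid d$), and the approximation is proved exactly as in Lemma~\ref{lem:compare-MTsmooth-dt}, with Pólya--Vinogradov handling all characters not induced by~$\chi$ (giving $X^\ee D^{1/2}R^{3/2}$), the $D>R$ case reduced to the trivial bound on $M_{f,v}^\chi$, and the $D\le R$ main term reassembled via Möbius inversion and the $q$-sum evaluation into \eqref{eq:rewrite-Mfvchi}.
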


\begin{proof}
We rewrite
$$ M_{f, v}^\chi(b, h; M) = \frac1{bD}\ssum{a\mod{D} \\ (a, D)=1} \chi(a) \sum_{d|bD} \frac{c_d(h-ab)}{d} \int (\log(\xi-h) + 2\gamma - 2\log d) f(\xi)v\Big(\frac{\xi}{bM}\Big)\dd\xi. $$
Using Gau{\ss} sums,
$$ \ssum{a\mod{D} \\ (a, D)=1}\chi(a) c_d(h-ab) = \gsm(\chi) {\bar \chi}\Big(\frac{-bD}{d}\Big) \sum_{\nu\mods{d}} {\bar \chi}(\nu) \e\Big(\frac{h\nu}d\Big). $$
This last expression vanishes unless~$D(b, D^\infty)|d$. Denoting~$b^\circ = (b, D^\infty)$ and~$b^* = b/b^\circ$, we obtain for~$d|b^*$
\begin{align*}
  \ssum{a\mod{D} \\ (a, D)=1}\chi(a) c_{Db^\circ d}(h-ab) {}& = \gsm(\chi) {\bar\chi}(-b^*/d) \gsm(\bar\chi) \sum_{\delta|(b^\circ d, h)} \delta \chi(h/\delta) \mu(b^\circ d/\delta) {\bar\chi}(b^\circ d/\delta) \\
  {}& = b^\circ D \1_{b^\circ | h} {\bar \chi}(b^*) \chi(h/b^\circ) c_d(h) \\
  {}& = b^\circ D \chi\Big(\frac{h}{(h, b)}\Big) {\bar\chi}\Big(\frac{b}{(h, b)}\Big) c_d(h).
\end{align*}
This yields our first claim.

For the second, the computations are similar to the previous Lemma. If~$D>R$, we get
\begin{equation}
\sum_m f(bm) v\Big(\frac mM\Big) \chi(m) \dt_h(bm; R) \ll X^\ee D^{1/2} R^{3/2},\label{eq:bound-typed1-charsnonprinc}
\end{equation}
while on the other hand~$M_{f,v}^\chi(b, h ; M) \ll (b, h) (bD)^{-1} X (\log X)^2$ by a simple computation from~\eqref{eq:rewrite-Mfvchi}. If~$D\leq R$, the bound~\eqref{eq:bound-typed1-charsnonprinc} applies to all the characters involved in the definition of~$\dt_h(bm;R)$, except all those which are induced by~$\chi$. We obtain
$$ \sum_m f(bm) v\Big(\frac mM\Big) \chi(m) \dt_h(bm; R) = 2\ssum{D|\frac q{(q, h)}} \frac{\chi(\frac h{(b, q)}){\bar\chi}(\frac b{(b,q)})}{\vphi(\frac q{(h,q)})} \ssum{(bm,q)=(h,q) \\ q^2 \leq bm-h} f(bm) v\Big(\frac mM\Big) + \O(X^\ee D^{1/2}R^{3/2}). $$
Similarly as above, the main term in the right-hand side can be rewritten
$$ \frac2b\chi\Big(\frac{h}{(h,b)}\Big){\bar\chi}\Big(\frac{b}{(h,b)}\Big) \int f(\xi)v\Big(\frac{\xi}{bM}\Big) \ssum{q\leq \sqrt{\xi-h} \\ (b,q)|h, \,\, D| \frac q{ (h,q) } \\ ( D, (b,h) / (b,q) )=1} \frac{(q,b)}{q} \dd \xi + \O(X^\ee) $$
The~$\chi$-factors impose the conditions~$b^\circ|h$ and~$(D, h/b^\circ)=1$. We rewrite the~$q$-sum as
$$ \ssum{q\leq \sqrt{\xi-h} \\ (b,q)|h, \,\, D| \frac q{ (h,q) } \\ (D, (b,h)/(b,q))=1} \frac{(b, q)}{q} = \frac1D \ssum{q\leq \sqrt{\xi-h}/(Db^\circ) \\ (b^*, q) | h} \frac{(q, b^*)}{q} = \frac1D \sum_{d|b^*}\frac{c_d(h)}{d}H\Big(\frac{\sqrt{\xi-h}}{Db^\circ d}\Big) $$
whence the claimed expression.
\end{proof}

\subsection{Type~\texorpdfstring{$\d_1$}{tau\_1} estimates}

The following estimate is relevant for convolutions with one smooth component of size~$\gg x^{1/3+\ee}$. It can be viewed as a generalization of a result of Selberg~\cite[p.235]{Selberg-Collected} on the equidistribution of~$\d_2$ in arithmetic progressions.

\begin{lemma} \label{lem:Delta for large M}
  Let~\( \varepsilon > 0 \), let~\( C_2 > C_1 > 0 \), let \( v : (0, \infty) \to \R \) be a smooth and compactly supported function, and let \(\chi\bmod{D}\) be a Dirichlet character of modulus~$D\geq 1$.
  Then we have, for any \( X, M \geq 1 \) and~$R\geq D$, any \( 1 \leq bD , |h| \ll X^{1 - \varepsilon} \), and any interval~\( I \subset [ C_1 X,  C_2 X ] \),
  \begin{align} \label{eq:Delta for large M, smooth version}
    \sum_{ m : \,\, bm \in I } \chi(m) v\left( \frac mM \right) \Delta_h(bm; R) & \ll X^\varepsilon \left( D X^\frac13 + (b, hD^\infty)M X^{-\frac12} + D^{\frac12} R^\frac32 \right).
  \end{align}
  The implied constants depend only on the function~\(v\) and the constants~\( \varepsilon \), \( C_1 \)~and~\(C_2\).
\end{lemma}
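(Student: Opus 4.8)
The plan is to split off the main term of the sum using the comparison lemmas of the preceding subsection, to evaluate the remaining divisor sum by Selberg's hyperbola method made uniform in the character, and to check that the two main terms coincide so that they cancel. Write $\Sigma$ for the left-hand side of~\eqref{eq:Delta for large M, smooth version}. Since $\Delta_h(n;R)=\d(n-h)-\dt_h(n;R)$, we have $\Sigma=S_\d-S_{\dt}$ with
$$ S_\d:=\ssum{m:\,bm\in I}\chi(m)v\big(\tfrac mM\big)\d(bm-h),\qquad S_{\dt}:=\ssum{m:\,bm\in I}\chi(m)v\big(\tfrac mM\big)\dt_h(bm;R). $$
As $v$ is compactly supported inside $(0,\infty)$ while $bm\asymp X$ on the support of the summand, both sums are empty unless $M\asymp X/b$, which I may therefore assume; then $X^{1/2+\ee}(h,b^*)/b^*\ll X^{\ee}(b,hD^\infty)MX^{-1/2}$ (recall $(b,hD^\infty)=b^\circ(b^*,h)$), so the last error term of Lemma~\ref{lem:compare-MTsmooth-chi-dt} is admissible.

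To evaluate $S_{\dt}$, I would first replace the sharp cutoff $\1_{bm\in I}$ by smooth majorant and minorant weights supported near $[C_1X,C_2X]$, the contribution of the two boundary strips being controlled through Lemma~\ref{lem:rough-bound-sum-d-dt} together with Cauchy--Schwarz and absorbed into the error below. One is then in position to apply Lemmas~\ref{lem:compare-MTsmooth-dt}--\ref{lem:compare-MTsmooth-chi-dt} (to the primitive character inducing $\chi$, using~\eqref{eq:def-MTsmooth-chi} when $\chi$ is principal). The key point is that the standing hypothesis $R\ge D$ forces the indicator $\1_{D>R}$ appearing there to vanish, so that
$$ S_{\dt}=\mathfrak M+\O\!\left(X^{\ee}\big(D^{1/2}R^{3/2}+(b,hD^\infty)MX^{-1/2}\big)\right), $$
where $\mathfrak M$ is the explicit main term, of the shape displayed in~\eqref{eq:rewrite-Mfvchi}: a short sum over $d\mid b^*$ of integrals of $\log\!\big((\xi-h)/(Db^\circ d)^2\big)$ against the smooth weights.

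The substance of the argument is the evaluation of $S_\d$, where one generalizes Selberg's estimate for the equidistribution of $\d_2$ in arithmetic progressions. Using the symmetric hyperbola identity $\d(n-h)=2\sum_{d\mid n-h,\ d<\sqrt{n-h}}1+\1_{n-h=\square}$ and interchanging summations, the task reduces to estimating, for each $d$, the sum of $\chi(m)v(m/M)$ over $m$ lying in an interval and in a single residue class modulo $d/(b,d)$ — a class which is empty unless $(b,d)\mid h$; the perfect-square terms contribute only $\O\big(X^{\ee}(1+(b,hD^\infty)MX^{-1/2})\big)$, by counting solutions of $j^2+h\equiv0\bmod b$. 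For $d\le X^{1/3}$ one evaluates each progression sum by completion, combining the modulus $d/(b,d)$ with the modulus $D$ of $\chi$; after summing over $d$ and untangling the $\gcd$-conditions — which runs exactly parallel to the computation in the proof of Lemma~\ref{lem:compare-MTsmooth-chi-dt} — the resulting main terms reproduce \emph{precisely} the same quantity $\mathfrak M$. For $X^{1/3}<d<\sqrt{n-h}$ one invokes van der Corput-type bounds for sums of the form $\sum_d\psi(\alpha/d+\beta)$, as in Selberg's argument; tracking the modulus $D$ and the $\gcd$-structure through these bounds produces the error $\O\big(X^{\ee}(DX^{1/3}+(b,hD^\infty)MX^{-1/2})\big)$, the $D$ coming from the modulus of $\chi$ entering the exponential sums and the term $(b,hD^\infty)MX^{-1/2}$ collecting the truncation of the hyperbola near $d\asymp\sqrt{n-h}$ and the divisors $d$ built from primes dividing $bhD$. (Alternatively, the required asymptotics for $S_\d$ can be extracted from the mean-value results of~\cite{D-disp,T-dk}, at the cost of recasting their main term into the shape of $\mathfrak M$.)

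Subtracting, the main terms $\mathfrak M$ cancel, leaving exactly the bound~\eqref{eq:Delta for large M, smooth version} after a relabelling of $\ee$. I expect the main obstacle to be the uniform treatment of $S_\d$: pushing Selberg's hyperbola argument through with fully explicit dependence on $\chi$, $b$, $h$ and $M$, and — the genuinely delicate point — verifying that its main term agrees on the nose with the $\mathfrak M$ delivered by Lemmas~\ref{lem:compare-MTsmooth-dt}--\ref{lem:compare-MTsmooth-chi-dt}, so that the cancellation is exact and only admissible errors survive. The bookkeeping of the conditions $(b,d)\mid h$ and $(bm,q)=(h,q)$ and of the Gauss sums carrying $\chi$ is the fiddly but essential part of this matching.
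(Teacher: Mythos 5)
Your overall architecture is the same as the paper's: split $\Sigma=S_\d-S_{\dt}$, reduce to $M\asymp X/b$, convert $S_{\dt}$ into the explicit main term via Lemmas~\ref{lem:compare-MTsmooth-dt}--\ref{lem:compare-MTsmooth-chi-dt} (with $R\geq D$ killing $\1_{D>R}$), evaluate the divisor sum $S_\d$ with the \emph{same} main term, and let the two cancel; the paper handles $\chi$ by splitting $m$ into classes $a\bmod D$ and enlarging the modulus $b\to bD$ rather than carrying $\chi$ through the divisor-sum evaluation, but that is cosmetic.

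The genuine gap is in your primary route for $S_\d$. After the hyperbola step, the condition is $d\mid bm-h$, i.e.\ a congruence whose class involves a modular inverse ($m\equiv h\overline b \bmod d/(b,d)$, or equivalently the complementary variable lies in a class $\equiv -h\overline d\bmod b/(b,d)$). The per-$d$ error terms are therefore sawtooth values $\psi$ evaluated at arguments containing these inverses, and the sums over $d$ one must bound are \emph{incomplete Kloosterman-type sums} modulo (divisors of) $b$, not smooth sums of the form $\sum_d\psi(\alpha/d+\beta)$. Van der Corput estimates do not apply to such sums; without genuine cancellation one only gets $O(1)$ per divisor, i.e.\ an error $\asymp X^{1/2}$, which exceeds $DX^{1/3}+(b,hD^\infty)MX^{-1/2}$ precisely in the critical range $b\ll X^{2/3-\eta}$ where the lemma is applied. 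The $X^{1/3}$-quality uniformity in $b$ is exactly the content of Selberg's result cited in the paper, and both his argument and the one the paper actually invokes (\cite[Section~3]{T-dk}, resting on the computations of~\cite{Top17a}) proceed via Voronoi summation and Weil's bound for Kloosterman sums; this input cannot be replaced by exponent-pair/van der Corput technology. Your parenthetical fallback --- quoting the smoothed asymptotics from~\cite{D-disp,T-dk} and recasting their main term as $\mathfrak M$ --- is in fact the paper's proof: one smooths the cutoff $\1_{bm\in I}$ with a parameter $\Omega$ (costing $\O(\Omega X^{1+\ee}b^{-1})$, optimized at $\Omega=bDX^{-2/3}$), applies the result of~\cite{T-dk} to each class $a\bmod D$, and matches against Lemmas~\ref{lem:compare-MTsmooth-dt}--\ref{lem:compare-MTsmooth-chi-dt}. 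A smaller point: controlling the boundary strips by Lemma~\ref{lem:rough-bound-sum-d-dt} plus Cauchy--Schwarz is lossy (the second moment there is over a full dyadic range, so you only get $\asymp\Omega^{1/2}M$ rather than $\Omega M$, which is too large for small $b$); bounding the strip contribution trivially, as the paper does, is both simpler and sufficient.
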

\begin{proof}
  Note that we can always assume~\( bM \asymp X \), since otherwise the sums in consideration are empty.
  Let \( f : (0, \infty) \to [0, \infty) \) be a smooth weight function, which is compactly supported in \( \operatorname{supp} f \subset [ C_1 X / 2, 2 C_2 X ] \), which has value~\( f(\xi) = 1 \) for all~\( \xi \in I \), and whose derivatives satisfy
  \[ f^{ (\nu) }(\xi) \ll \frac1{ (\Omega X)^\nu } \quad \text{for} \quad \nu \geq 0 \quad \text{and} \quad \int \! \left| f^{ (\nu) }(\xi) \right| \, d\xi \ll \frac1{ (\Omega X)^{\nu - 1} } \quad \text{for} \quad \nu \geq 1, \]
  for some constant~\( \Omega \leq 1 \).
  We can then encode the condition~\( bm \in I \) by using the function~\( f(\xi) \) via
  \begin{equation} \label{eq:smoothing the Delta sum}
    \sum_{ m : \,\, bm \in I } \chi(m) v\left( \frac mM \right) \Delta_h(bm; R) = \sum_m f(bm) v\left( \frac mM \right) \chi(m) \Delta_h(bm; R) + \O\left( \Omega X^{1 + \varepsilon} b^{-1} \right),
  \end{equation}
  so that it suffices to consider the smoothed sum on the right hand side.
  
  Assume first that \(\chi\) is the trivial character.
  In~\cite[Section~3]{T-dk} it is shown that
  \begin{equation} \label{eq:evalution of d_2 in arithmetic progressions}
    \sum_m f(bm) v\left( \frac mM \right) \d(bm - h) = M_{f, v}(b, h; M) + \O\left( X^\varepsilon b^\frac12 \Omega^{-\frac12} \right),
  \end{equation}
  where the main term~\( M_{f, v}(b, h; M) \) is given by~\eqref{eq:def-main-term-smooth}. By Lemma~\ref{lem:compare-MTsmooth-dt}, we obtain
  \begin{align}
    \sum_m f(bm) v\left( \frac mM \right) \dt_h(bm; R) = M_{f, v}(b, h; M) + \O\left( X^\varepsilon R^\frac32 + (b, h) b^{-1} X^{\frac12 + \varepsilon} \right). \label{eqn: evaluation of tilde d}
  \end{align}
  The estimate~\eqref{eq:Delta for large M, smooth version}, in the case~$D=1$ and~$\chi=\1$, now follows from~\eqref{eq:smoothing the Delta sum} with the choice~\( \Omega = b X^{-\frac23} \).
  
  Now assume that \(\chi\) is a primitive character modulo~$D$, where~$D\leq R$ and~$bD\ll X^{1-\ee}$.
  We write
  \[ \sum_m f(bm) v\left( \frac mM \right) \chi(m) \d(bm - h) = \ssum{a\mod{D}\\(a, D)=1} \chi(a) \left( \sum_m \tilde f(\tilde b m) \tilde v\left( \frac m{\tilde M} \right) \d(\tilde b m - \tilde h) \right), \]
  with
  \[ \tilde b := Db, \quad \tilde M := M/D, \quad \tilde h := h - ab, \quad \tilde f(\xi) := f(\xi + ab) \quad \text{and} \quad \tilde v(\xi) := v\left( \xi + \frac aM \right), \]
  so that we can use our former result~\eqref{eq:evalution of d_2 in arithmetic progressions} to get
  \[ \sum_m f(bm) v\left( \frac mM \right) \chi(m) \d(bm - h) = M_{f, v}^\chi(b, h; M) + \O\!\left( X^\varepsilon D^{\frac 32}b^\frac12 \Omega^{-\frac12} \right), \]
  where \( M_{f, v}^\chi(b, h; M) \) is defined in~\eqref{eq:def-MTsmooth-chi}. By Lemma~\ref{lem:compare-MTsmooth-chi-dt}, we obtain
  \begin{align*}
    \sum_m f(bm) v\left( \frac mM \right) \chi(m) \d(bm - h) {}& = \sum_m f(bm) v\left( \frac mM \right) \chi(m) \dt_h(bm; R) \\
    {}& \qquad \qquad + O_\ee\Big(X^\ee D^{3/2}b^{1/2}\Omega^{-1/2} + X^\ee D^{1/2}R^{3/2} + X^{1/2+\ee}\frac{(h, b^*)}{b^*}\Big)
  \end{align*}
  We choose \( \Omega = b D X^{-\frac23} \), and hence get~\eqref{eq:Delta for large M, smooth version} also in this case.
  
  The case when~$\chi$ is not necessarily primitive follows at once using Möbius inversion.
\end{proof}

\subsection{Type~\texorpdfstring{$\d_2$}{tau\_2} estimates}

The following estimate is a uniform version of the~$\d_2-\d_2$ shifted convolution problem obtained recently by the second author.

\begin{lemma} \label{lem:Delta for large M_1, M_2}
  Let~\( \varepsilon > 0 \), let~\( C_2 > C_1 > 0 \), let \( v_1, v_2 : (0, \infty) \to \R \) be smooth and compactly supported weight functions, and let \(\chi_1\)~and~\(\chi_2\) be Dirichlet characters mod~\(D\).
  Then for any~\( X, b \geq 1 \) and~\( R \geq D \), any~\( M_1 \geq M_2 \geq 1 \) with~\( X^\frac12 \leq M_1 M_2\), any~\( h \in \Z \) with~\( 1 \leq |h|, D \leq X^{1/4} \) and any interval~\( I \subset [ C_1 X/2, C_2 X ] \), we have
  \begin{multline} \label{eq:Delta for large M_1, M_2, smooth version}
    \sum_{ \substack{ m_1, m_2 : \\ b m_1 m_2 \in I } } v_1\left( \frac{m_1}{M_1} \right) v_2\left( \frac{m_2}{M_2} \right) \chi_1(m_1) \chi_2(m_2) \Delta_h(b m_1 m_2; R) \\
      \ll b^\circ D^{5/2} (XM_1M_2)^{1/3+\ee}\bigg( 1 + \bigg(\frac{|h|M_1M_2}{XD}\bigg)^{1/4}\bigg) + X^{-1/2+\ee} R^{3/2} b^\circ (h,b) M_1M_2^2.
  \end{multline}
  The implied constant depends only on the constants~\( \varepsilon\), \(C_1\)~and~\(C_2\), and the functions \( v_1 \)~and~\( v_2 \).
\end{lemma}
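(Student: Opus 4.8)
\emph{Proof plan.} The plan is to reduce this bilinear sum to the uniform smooth form of the~$\d_2$--$\d_2$ (additive divisor) estimate proved by the second author in~\cite{T-dk}, and then to check that the main term it produces agrees, up to an admissible error, with the~$\dt_h$-contribution; subtracting the two evaluations yields~\eqref{eq:Delta for large M_1, M_2, smooth version}. First I would remove the sharp cutoff~$bm_1m_2\in I$ by inserting a smooth weight~$f$, supported in a dyadic interval slightly larger than~$[C_1X/2,C_2X]$, equal to~$1$ on~$I$, with derivatives controlled at a scale~$\Omega X$ to be optimised later, exactly as in the proof of Lemma~\ref{lem:Delta for large M}; the resulting boundary discrepancy is handled by Cauchy--Schwarz together with the second moment bound of Lemma~\ref{lem:rough-bound-sum-d-dt} (and, when~$M_2$ is bounded, the sum over~$m_2$ may simply be extracted and the estimate reduces to Lemma~\ref{lem:Delta for large M}). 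Then I would dispose of the characters by decomposing each~$m_i$ into residue classes modulo~$D$, which turns each~$\chi_i(m_i)$ into a constant at the price of~$\varphi(D)^2\leq D^2$ subsums and a shift of~$h$ by a multiple of~$b$; this is the source of the powers of~$D$ in~\eqref{eq:Delta for large M_1, M_2, smooth version}. It then suffices to treat~$\sum_{m_1,m_2}f(bm_1m_2)v_1(m_1/M_1)v_2(m_2/M_2)\,\Delta_h(bm_1m_2;R)$, with possibly extra congruence conditions on~$m_1$ and~$m_2$, for the principal characters.

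For the~$\d$-part, I would invoke the uniform smooth evaluation of~$\sum_{m_1,m_2}f(bm_1m_2)v_1v_2\,\d(bm_1m_2-h)$ from~\cite[Section~3]{T-dk}, which yields a main term~$\mathcal M(b,h;M_1,M_2)$ --- an Estermann-type singular series times a smooth archimedean integral --- plus an error of the shape~$b^\circ D^{5/2}(XM_1M_2)^{1/3+\ee}\bigl(1+(|h|M_1M_2/(XD))^{1/4}\bigr)$ once the smoothing scale~$\Omega$ is chosen optimally as in Lemma~\ref{lem:Delta for large M}. The hypothesis~$X^{1/2}\leq M_1M_2$ is precisely what makes this ``type~$\d_2$'' input applicable: it guarantees that neither of the smooth variables~$m_1,m_2$ is too short for the dispersion and Kloosterman-sum analysis underlying~\cite{T-dk}.

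For the~$\dt_h$-part, I would expand~$\dt_h(bm_1m_2;R)$ by its definition~\eqref{eq:def-dthR} and sum over~$m_1$ and~$m_2$ separately: once the divisibility and coprimality conditions linking~$m_1m_2$ to the modulus~$q$ are resolved by M\"obius inversion, the summand factorises as a function of~$m_1$ times a function of~$m_2$, each carrying a character~$\chi_i\bar\chi$. For those~$\chi$ which do not cancel against~$\chi_1$ or~$\chi_2$, the one-dimensional sum in one variable is bounded by partial summation and the P\'olya--Vinogradov inequality while the other variable is estimated trivially; summing over~$q\leq\sqrt{X}$ and over the admissible~$\chi$ yields the term~$X^{-1/2+\ee}R^{3/2}b^\circ(h,b)M_1M_2^2$. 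The complementary, principal-character contribution is evaluated by exactly the computation carried out in Lemmas~\ref{lem:compare-MTsmooth-dt} and~\ref{lem:compare-MTsmooth-chi-dt}, and reproduces the same main term~$\mathcal M(b,h;M_1,M_2)$. Subtracting the~$\d$- and~$\dt_h$-evaluations, the main terms cancel and one collects the errors.

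The step I expect to be the main obstacle is the precise matching of the two main terms: one must verify that the Estermann/singular-series main term coming out of~\cite{T-dk} for~$\sum_{m_1,m_2}f v_1 v_2\,\d(bm_1m_2-h)$ is \emph{identically} equal, not merely asymptotically close, to the one obtained by inserting~$\dt_h$, uniformly in~$b$, $D$ and~$h$ throughout the ranges~$b\ll X^{1/2}$, $D\leq X^{1/4}$, $1\leq|h|\leq X^{1/4}$. Carrying this out amounts to rewriting Ramanujan sums, Gau{\ss} sums and the logarithmic archimedean factors into a common normal form --- as was done for the ``type~$\d_1$'' comparison in Lemmas~\ref{lem:compare-MTsmooth-dt}--\ref{lem:compare-MTsmooth-chi-dt} --- but now with a two-variable smoothing and the additional shifts introduced by the residue-class decomposition, which makes the bookkeeping appreciably heavier; a secondary difficulty is to keep every error term uniform in~$b$ and~$D$ so as to obtain the clean shape in~\eqref{eq:Delta for large M_1, M_2, smooth version}.
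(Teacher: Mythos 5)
There is a genuine gap at the heart of your reduction. You claim that decomposing \emph{each} of $m_1,m_2$ into residue classes mod~$D$ turns the characters into constants ``at the price of $\varphi(D)^2$ subsums and a shift of~$h$ by a multiple of~$b$'', after which the untwisted smooth evaluation of $\sum_{m_1,m_2} f\, v_1 v_2\, \d(bm_1m_2-h)$ from~\cite{T-dk} can be invoked as a black box. The shift trick only works when a \emph{single} variable enters linearly, as in Lemma~\ref{lem:Delta for large M}: there $m\equiv a\bmod D$ gives $bm=bDm'+ab$, hence a shifted problem of the same shape. With two variables, writing $m_i=Dm_i'+a_i$ produces cross terms, and what you are really left with is the additive divisor sum for $\d(bm_1m_2-h)$ with congruence conditions on the two \emph{divisor} variables $m_1,m_2$ of $bm_1m_2$ --- which is exactly as hard as the original character twist (by orthogonality they are equivalent) and is \emph{not} covered by the results of~\cite{T-dk}; the paper states this explicitly (``the results of~\cite{T-dk} cannot be quoted as a black box''). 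So the step ``it then suffices to treat \dots for the principal characters'' does not go through, and with it the claimed error term $b^\circ D^{5/2}(XM_1M_2)^{1/3+\ee}(1+(|h|M_1M_2/(XD))^{1/4})$ has no source.

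What the paper actually does at this point is the missing idea: on the set $m_1m_2\equiv a\bmod D$ one writes $\chi_1(m_1)\chi_2(m_2)=\chi_1(a)\chi_0(m_2)$ with $\chi_0=\bar\chi_1\chi_2$, sums over $a\bmod D$ (a single congruence on the product, which fits the parametrisation of~\cite{Top17a}), and then carries the remaining character $\chi_0$ on the divisor variable through the entire dispersion/Voronoi/Kuznetsov analysis of~\cite{Top17a}: the expressions $\Sigma_{AB}^{0},\Sigma_{AB}^{\pm},R_{AB}^{\pm}$ acquire $\chi_0$-factors and the Kuznetsov formula must be applied with nebentypus $\chi\chi_0$, the point being that the spectral bounds used there are uniform in the nebentypus. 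This reworking is the substance of the proof and is absent from your plan. By contrast, the main-term matching you single out as the principal obstacle is comparatively routine and is already packaged in Lemma~\ref{lem:compare-MTsmooth-chi-dt}: the paper does not compare two independent two-variable evaluations, but applies that lemma with modulus $m_2b$ (treating $m_2$ as part of the modulus) and then sums over $m_2$, which also produces the second error term $X^{-1/2+\ee}R^{3/2}b^\circ(h,b)M_1M_2^2$ after the choice $\Omega=X^{1/3}(M_1M_2)^{-2/3}$.
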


\begin{proof}
  Note that we can make the assumption~\( b \asymp \frac X{M_1 M_2} \), as otherwise the sum in consideration is empty.
  Also, as in Lemma~\ref{lem:Delta for large M}, we can exchange the original sum by its smoothed version,
  \[ \sum_{ m_1, m_2 } f(b m_1 m_2) v_1\left( \frac{m_1}{M_1} \right) v_2\left( \frac{m_2}{M_2} \right) \chi_1(m_1) \chi_2(m_2) \Delta_h(b m_1 m_2; R), \]
  with an error of the size of~\( \O\left( \Omega X^{1 + \ee} b^{-1}  \right) \).
  
  Let~$\chi_0 := {\bar\chi_1}\chi_2$. The results of~\cite{T-dk} cannot be quoted as a black box, however, the computations of~\cite{Top17a} on which they are based may be adapted with little change.
  We write
  \[ \sum_{m_1, m_2} f(b m_1 m_2) v_1\left( \frac{m_1}{M_1} \right) v_2\left( \frac{m_2}{M_2} \right) \chi_1(m_1) \chi_2(m_2) \d(b m_1 m_2 - h) = \sum_{ a \mod{D} } \chi_1(a) D(a), \]
  where \( D(a) \) is the defined as
  \[ D(a) := \sum_n w_1\left( \frac{r_1 n + f_1}{ x_1 } \right) w_2\left( \frac{r_2 n + f_2}{ x_2 } \right) \d( r_1 n + f_1 ) \sum_{ \substack{ n_1, n_2 \\ n_1 n_2 = r_2 n + f_2 } } \chi_0(n_2) h_{M_2 M_1}(n_1, n_2), \]
  with
  \[ r_1 := bD, \quad r_2 := D, \quad f_1 := a b - h, \quad f_2 := a, \quad x_1 := X, \quad x_2 := \frac Xb, \]
  and
  \[ w_1(\xi) := \sqrt{ f(X \xi + h) }, \quad w_2(\xi) := \sqrt{ f(X \xi) }, \quad h_{M_2 M_1}(n_1, n_2) := v_2\left( \frac{n_1}{M_2} \right) v_1\left( \frac{n_2}{M_1} \right). \]
  The sum~\( D(a) \) is now of the same shape as the sum~\( D_{AB}(x_1, x_2) \) defined in~\cite[p.~157]{Top17a}, with the function~${\tilde f}(a, b)$ there replaced by~$\chi_0(a){\tilde f}(a, b)$. The computations of Section~3 of~\cite{Top17a} can then be adapted with the following changes. In Section~3.1 of~\cite{Top17a}, the expressions~$\Sigma_{AB}^0$ and~$\Sigma_{AB}^{\pm}$ have an additional factor~$\chi_0(au_2/u_2^*)$ in the summands. In the sums in the definition of~$R_{AB}^{\pm}$, p.159~\textit{ibid}, the summand has to be multiplied by an additionnal factor~$\chi_0(c)$, and the altered relation
  $$ \Sigma_{AB}^{\pm} = \ssum{u_2^*|u_2 \\ r_2^* |r_2} \chi_0\Big(\frac{u_2}{u_2^*}\Big)\ssum{d \\ (d, r_1^*s_2u_2^*)=1} \chi_0(d) \frac{R_{AB}^\pm}{d} $$
  holds. Consequently, the relationship between~$R_{AB}^\pm(N;\chi)$ and~$K_{AB}^\pm(N; \chi)$ becomes
  $$ R_{AB}^\pm(N;\chi) = \sum_{N<n\leq2N} \d(n) {\hat S}_v({\bar\chi};n)K_{AB}^\pm(\chi\chi_0;n). $$
  The rest of the argument of~\cite{Top17a} is adapted with the only change that the Kuznetsov formula is applied with nebentypus~$\chi \chi_0$ instead of~$\chi$. This has no effect on the error terms, since the bounds in Theorem~2.6 and Lemmas~2.7, 2.8 and 2.9 of~\cite{Top17a} are uniform with respect to the nebentypus.

  By the bound~(3.4) of~\cite{Top17a}, with~$b^\circ = (b, D^\infty)$, $r_0\gets Db^\circ$ and~$h\gets hD$, we obtain
  \begin{multline*}
    D(a) = \ssum{m_2 \\ (m_2,D)=1} \frac{ \chi_0(m_2) v_2\left( \frac{m_2}{M_2} \right)}{bD m_2} \sum_{d \mid bDm_2} \frac{ c_d\left(a b m_2{\bar m_2}-h \right) }d \int \! \left( \log(\xi - h) + 2\gamma - 2\log d \right) \\
    \cdot f(\xi) v_1\left( \frac\xi{b M_1 m_2} \right) \, d\xi + \O\left( b^\circ D^{3/2} X^{\frac12 + \varepsilon} \left( \frac1{ \Omega^\frac12 } + \Big(\frac{(b, h) X}{Db^2}\Big)^\theta\Big\{1 + \Big(\frac{|h|}{bD}\Big)^{1/4}\Big\} \right) \right),
  \end{multline*}
  where \( \overline m_2 \) denotes any integer such that~\( {\bar m_2} \cdot m_2 \equiv 1 \bmod D \).
  We sum over~$a\mod{D}$, exchange the~$a$- and~$m_2$-sums, and change variables~$a\gets a m_2$. We obtain
  \begin{multline*}
    \sum_{a \mod{D}} \chi_1(a) D(a) = \sum_{m_2} v\left( \frac{m_2}{M_2} \right) \chi_2(m_2) M_{f, v_1}^{\chi_1}(m_2 b, h; M_1) \\
      + \O\left( b^\circ D^{5/2} X^{\frac12 + \varepsilon} \left( \frac1{ \Omega^\frac12 } + \Big(\frac{(b, h) X}{Db^2}\Big)^\theta\Big\{1 + \Big(\frac{|h|}{bD}\Big)^{1/4}\Big\} \right) \right),
  \end{multline*}
  with \( M_{f, v_1}^{\chi_1}(m_2 b, h; M_1) \) defined as in~\eqref{eq:def-MTsmooth-chi}, for which we can use Lemma~\ref{lem:compare-MTsmooth-chi-dt}.
  The bound~\eqref{eq:Delta for large M_1, M_2, smooth version} follows after choosing~\( \Omega = X^\frac13 (M_1 M_2)^{-\frac23} \).
\end{proof}

\subsection{Type~II estimates}

The following estimate, the first version of which was obtained in~\cite{Fou85}, concerns convolutions with one component supported inside~$[x^\ee, x^{1/3-\ee}]$.

\begin{lemma}\label{lem:equidistrib-bilin}
For all~$\eta, A>0$, there exist~$\delta, B >0$ such that the following holds. Whenever~$X, R\geq 1$, $(a, h)\in\Z^2$, an interval~$I\subset[X/2, X]$, and two sequences~$(\beta_n)$, $(\gamma_n)$ are given, under the conditions~$1 \leq R, \abs{a}, \abs{h} \leq X^\delta$, and
$$ \abs{\beta_n} \leq \d_A(n), \qquad \abs{\gamma_n}\leq \d_A(n), \qquad \gamma_n\neq 0 \implies n\in[X^\eta, X^{\frac13-\eta}], $$
we have
\begin{equation}
\ssum{n\in I} (\beta \ast \gamma)(n) \Delta_h(an; R) \ll_{A, \eta} \d((a, h)) R^{-1/2} X (\log X)^B. \label{eq:bound-AP-typeII}
\end{equation}
\end{lemma}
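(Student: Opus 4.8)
The plan is to prove~\eqref{eq:bound-AP-typeII} by Linnik's dispersion method, following the treatment of the type~II sums in~\cite{Fou85,BFI86} as streamlined in~\cite{D-disp,T-dk}; the one genuinely new point is to match the main terms produced by the dispersion with the function~$\dt_h$, which is carried out via Lemmas~\ref{lem:compare-MTsmooth-dt} and~\ref{lem:compare-MTsmooth-chi-dt}. We may take~$\delta$ as small as we wish. Writing~$(\beta\ast\gamma)(n)=\sum_{\ell m=n}\gamma_\ell\beta_m$, the support hypothesis on~$\gamma$ forces~$\ell\in[X^\eta,X^{1/3-\eta}]$ and hence~$m\asymp X/\ell\in[X^{2/3+\eta/2},X^{1-\eta/2}]$. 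After a dyadic splitting of the~$\ell$-range and a smoothing of the~$m$-summation (at an admissible cost, via the trivial bound for~$\Delta_h$), it suffices to bound
\[ T:=\sum_{\ell\sim L}\gamma_\ell\sum_m\beta_m\,w(m/M)\,\Delta_h(a\ell m;R),\qquad X^\eta\leq L\leq X^{1/3-\eta},\quad LM\asymp X. \]

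We apply Cauchy--Schwarz so as to replace~$(\beta_m)$ by its~$\ell^2$-norm, keeping the short variable~$\ell$ outside. As~$\sum_{\ell\sim L}|\gamma_\ell|^2\ll L(\log X)^{O(1)}$, it is enough to show that~$\sum_{m_1,m_2\sim M}\d_A(m_1)\d_A(m_2)\,|E(m_1,m_2)|\ll \d((a,h))^2R^{-1}XM(\log X)^{O(1)}$, where~$E(m_1,m_2):=\sum_{\ell\sim L}\Delta_h(am_1\ell;R)\,\overline{\Delta_h(am_2\ell;R)}$; indeed this yields~\eqref{eq:bound-AP-typeII} after taking square roots and summing over the~$O(\log X)$ dyadic values of~$L$. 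The diagonal~$m_1=m_2$ is harmless: an arithmetic-progression (Shiu-type) variant of Lemma~\ref{lem:rough-bound-sum-d-dt} gives~$\sum_{\ell\sim L}|\Delta_h(am\ell;R)|^2\ll L(\log X)^{O(1)}\d((am,h))^{O(1)}$, so the diagonal contributes~$O(X^{1+o(1)})$, which is~$\ll R^{-1}XM$ since~$M\geq X^{2/3+\eta/2}$.

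For the off-diagonal~$m_1\neq m_2$ we insert~$\Delta_h=\d(\cdot-h)-\dt_h(\cdot;R)$ and expand~$E(m_1,m_2)$ into four bilinear pieces, of respective types~$\d\d$,~$\d\dt_h$,~$\dt_h\d$ and~$\dt_h\dt_h$. The decisive piece~$\sum_{\ell\sim L}\d(am_1\ell-h)\,\d(am_2\ell-h)$ is a binary additive divisor correlation along the progressions~$\{am_i\ell-h:\ell\sim L\}$, whose common differences~$am_i\asymp aM$ lie well beyond~$\sqrt X$; its evaluation — crucially including the average over the two moduli as~$m_1,m_2$ range over~$[M,2M]$ — is exactly the one performed in~\cite{Top17a} (compare the proof of Lemma~\ref{lem:Delta for large M_1, M_2}) and in~\cite{D-disp,T-dk}, and rests on the Deshouillers--Iwaniec bounds for sums of Kloosterman sums~\cite{DI-Kloo}. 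It produces an explicit main term of the shape~\eqref{eq:def-main-term-smooth} together with an error term with a power saving in~$X$, and it is this step, through the said average, that is responsible for the restriction~$L\leq X^{1/3-\eta}$. The three remaining pieces are of type~$\d_1$: opening~$\dt_h$ into characters of conductor~$\leq R$, the sum~$\sum_{\ell\sim L}\d(am_i\ell-h)\,\dt_h(am_j\ell;R)$ becomes a divisor function twisted by a fixed Dirichlet character in an arithmetic progression of modulus~$\asymp am_j$, estimated by Voronoi summation and the Pólya--Vinogradov inequality as in Lemma~\ref{lem:Delta for large M}, and again giving a main term of the shape~\eqref{eq:def-main-term-smooth}; the piece~$\dt_h\dt_h$ is a pure character sum, evaluated directly by orthogonality and Pólya--Vinogradov. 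By Lemmas~\ref{lem:compare-MTsmooth-dt} and~\ref{lem:compare-MTsmooth-chi-dt}, $\dt_h$ is exactly the truncation at conductor~$R$ of the main term of~$\d(\cdot-h)$ in arithmetic progressions, so the four main terms cancel up to~$O(\d((a,h))^{O(1)}R^{-1}L(\log X)^{O(1)})$, the factor~$R^{-1/2}$ per occurrence of~$\dt_h$ coming from the conductors in the dyadic range~$(R,\sqrt X]$ that are discarded by the truncation, estimated via Pólya--Vinogradov. Summing this discrepancy and the power-saving errors over~$m_1,m_2\in[M,2M]$ and taking~$\delta,\eta$ small enough gives the desired bound.

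The main difficulty is the evaluation of the binary additive divisor correlation~$\sum_{\ell\sim L}\d(am_1\ell-h)\,\d(am_2\ell-h)$: one needs at once a main term in a form compatible with~\eqref{eq:def-main-term-smooth} (so that Lemmas~\ref{lem:compare-MTsmooth-dt}--\ref{lem:compare-MTsmooth-chi-dt} apply and cancel it against the~$\dt_h$-pieces) and an error term uniform enough in~$a$,~$h$,~$m_1$,~$m_2$ to survive the double average over~$m_1,m_2\in[M,2M]$. This uniform version is the content of the dispersion estimates of~\cite{D-disp,T-dk}, which rest on~\cite{DI-Kloo}; the remainder is essentially bookkeeping — carrying the extra parameter~$a$ throughout and rewriting those main terms in the~$\dt_h$-form.
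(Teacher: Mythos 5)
There is a genuine gap, and it lies at the heart of your argument: the Cauchy--Schwarz is applied in the wrong variable. Bounding by $\sum_{\ell\sim L}|\gamma_\ell|^2$ and expanding the square forces you to control
$E(m_1,m_2)=\sum_{\ell\sim L}\Delta_h(am_1\ell;R)\overline{\Delta_h(am_2\ell;R)}$,
where the summation range is the \emph{short} one ($L\leq X^{1/3-\eta}$) while the ``moduli'' $am_i\asymp aX/L\gg X^{2/3+\eta/2}$ are enormous; moreover you take absolute values of $E(m_1,m_2)$, so the average over $m_1,m_2$ can no longer be exploited by any harmonic analysis and you need (typical) pairs individually. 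The decisive piece $\sum_{\ell\sim L}\d(am_1\ell-h)\,\d(am_2\ell-h)$ is then a binary divisor correlation with only $X^{1/3-\eta}$ terms taken along progressions of modulus $\geq X^{2/3}$, and no version of the shifted-convolution/spectral machinery you invoke evaluates such a sum: in Lemma~\ref{lem:Delta for large M_1, M_2} and in \cite{Top17a,D-disp,T-dk} the smooth inner ranges dominate (there $M_1M_2\geq X^{1/2}$, and the error terms are powers of $X$ that would swamp the expected main term $\asymp L(\log X)^2$ here). Even the much weaker single-discrepancy statement $\sum_{\ell\sim L}\Delta_h(am\ell;R)=o(L)$ for an individual $m\asymp X^{2/3}$ amounts to equidistribution of the divisor function to moduli far beyond what is known. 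So the off-diagonal step --- which is the entire content of the lemma --- is unjustified, and the appeal to \cite{Top17a} is not a matter of ``bookkeeping''; the support condition $\gamma_n\neq0\Rightarrow n\in[X^\eta,X^{1/3-\eta}]$ exists precisely so that Cauchy--Schwarz can be taken the other way round. (Your opening sentence even announces the correct orientation, ``replace $(\beta_m)$ by its $\ell^2$-norm'', but the displayed reduction does the opposite.)

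In the dispersion method one discards the arbitrary coefficients on the \emph{long} variable, so that the duplicated variables after squaring are the short ones and the inner sums run over the long smooth range, where completion/Voronoi applies and the resulting Kloosterman sums are averaged via \cite{DI-Kloo}; with your numerology this would give a dispersion of size $\ll R^{-1}XL$ against a diagonal needing $L\gg R$, which is the real use of $L\geq X^{\eta}$. The paper in fact does not rerun the dispersion at all: after removing the $n$ with large common factor with $(ah)^\infty$ and extracting $\lambda_1\lambda_2\mid(ah)^\infty$, it writes $\Delta_h(a\lambda_1\lambda_2mn;R)$ in terms of the quantity $\cu_R(\cdot\,;q)$ of formula (5.1) of \cite{D-disp}, restricts to dyadic boxes, separates the variables $m,n,q$ at cost $\xi+\xi^{-1}R^{-1}$ with $\xi\asymp R^{-1/2}$, and then quotes the bilinear estimate attached to (5.6) of \cite{D-disp} directly; the only new work is uniformity in $a,h,\lambda_i$. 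Your diagonal bound and the global numerology $|T|^2\ll R^{-1}X^2$ are fine, but they do not touch the actual difficulty.
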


\begin{proof}
Recall that~$\Delta_h(an; R) \ll RX^\ee$. In the left-hand side of~\eqref{eq:bound-AP-typeII}, the contribution of those~$n$ such that~$(n, (ah)^\infty)>X^\delta$ is therefore at most
$$ RX^\ee \ssum{n\ll X \\ (n,(ah)^\infty)>X^\delta} 1 \ll RX^{1-\delta+\ee}. $$
Next, we have
$$ \ssum{d|(ah)^\infty \\ d\leq X^\delta} \ssum{n\in I \\ d|n \\ (n/d, ah)=1} (\beta \ast \gamma)(n) \Delta_h(an; R) = \ssum{\lambda_1,\lambda_2 |(ah)^\infty \\ \lambda_1\lambda_2 \leq X^\delta} \ssum{mn\in (\lambda_1\lambda_2)^{-1}I \\ (mn, ah)=1 \\ (m, \lambda_2)=1} \beta_{\lambda_1m} \gamma_{\lambda_2n} \Delta_h(a\lambda_1\lambda_2 mn; R). $$
Finally, we note that there are at most~$\O(X^{1/2+\ee})$ tuples~$(\lambda_1, \lambda_2, m, n)$ with~$\lambda_1\lambda_2mn\in I$ for which the expression~$a\lambda_1\lambda_2mn-h$ is a perfect square, and
$$ \Delta_h(a\lambda_1\lambda_2mn; R) = 2 \sum_{\lambda_3|(h, a\lambda_1\lambda_2)} \ssum{q\leq \sqrt{a\lambda_1\lambda_2mn-h}/\lambda_3 \\ (q, a\lambda_1\lambda_2h/\lambda_3^2)=1} \cu_R(mn\tfrac{a\lambda_1\lambda_2}{\lambda_3}\bar{\tfrac{h}{\lambda_3}}; q) + \O(\1_{a\lambda_1\lambda_2mn-h\text{ is a square}}), $$
where the notation~$\cu_R(n; q)$ is defined in formula~(5.1) of~\cite{D-disp}. Now, for each~$(\lambda_1, \lambda_2, \lambda_3)$, the sum
$$ S(\lambda_1, \lambda_2, \lambda_3) = \ssum{q\leq \sqrt{a\lambda_1\lambda_2mn-h}/\lambda_3 \\ (q, a\lambda_1\lambda_2h/\lambda_3^2)=1} \ssum{mn\in (\lambda_1\lambda_2)^{-1}I \\ (mn, ah)=1 \\ (m, \lambda_2)=1}
\cu_R(mn\tfrac{a\lambda_1\lambda_2}{\lambda_3}\bar{\tfrac{h}{\lambda_3}}; q) $$
is of the same shape as in formula (5.6) of~\cite{D-disp}, with three differences:
\begin{enumerate}
\item the quantity~$\d_A(\lambda_1)\d_A(\lambda_2)$ has to be factored out for the condition~(5.4) of~\cite{D-disp} to hold,
\item the sums over~$m$ and~$n$ must be restricted to dyadic intervals, which is done at the cost of an additionnal factor~$(\log x)^2$,
\item the sums over~$m$,~$n$ and~$q$ are not separated.
\end{enumerate}
The last point can be implemented by a standard argument (see \emph{e.g.} page~720 of~\cite{D-disp}), cutting the~$(m, n)$ sums into intervals of type~$[M, (1+\xi)M]\times[N, (1+\xi)N]$ with~$\xi \asymp R^{-1/2}$. Assuming~$\delta$ is small enough in terms of~$\eta$, we obtain
\begin{align*}
S(\lambda_1, \lambda_2, \lambda_3) {}& \ll \d_A(\lambda_1) \d_A(\lambda_2) (\lambda_1\lambda_2)^{-1} X (\log X)^B (\xi + \xi^{-1} R^{-1}) \\
{}& \ll \d_A(\lambda_1)\d_A(\lambda_2) (\lambda_1\lambda_2)^{-1} R^{-1/2} X (\log X)^B.
\end{align*}
We sum this over~$(\lambda_1, \lambda_2, \lambda_3)$ satisfying
$$ \lambda_1\lambda_2|(ah)^\infty, \quad \lambda_1\lambda_2 \leq x^\delta, \quad \lambda_3 | (h, a\lambda_1\lambda_2). $$
Since~$\sum_{\lambda | (ah)^\infty} \d_{2A}(\lambda)\d(\lambda) \lambda^{-1} \ll_A (\log\log x)^{O_A(1)}$, we obtain
$$ \ssum{n\in I} (\beta \ast \gamma)(n) \Delta_h(an; R) \ll \d((a, h))\big\{R X^{1-\delta/2} + R^{-1/2} X (\log X)^{B+1}\big\}, $$
which yields our claim by reinterpreting~$\delta$ and~$B$.
\end{proof}

\section{The case of rational parameters} \label{section:The case of rational parameters}

Let \( \chi_1, \ldots, \chi_T \) be distinct Dirichlet characters mod~\(D\), and the function~\( f \in \F_D^{\d_\Q}(A) \) be defined by
\begin{equation} \label{eq:definition of g}
  \sum_{n = 1}^\infty \frac{ f(n) }{ n^s } := \prod_{j = 1}^T L(s, \chi_j)^{b_j},
\end{equation}
with~\( b_1, \ldots, b_T \in \Q \), which we write in the form
\[ b_j = r_j + \frac{u_j}{v_j} \quad \text{with} \quad r_j \in \Z \quad \text{and} \quad u_j, v_j \in \N \quad \text{such that} \quad 0 \leq u_j < v_j.\]
For notational convenience we also define
\[ \| r \|_1 := \sum_{ 1 \leq j \leq T } | r_j |, \quad \| v \|_1 := \sum_{ 1 \leq j \leq T } v_j. \]
Our goal is to prove estimate~\eqref{eq:main bound} for the function~\(f\) defined in~\eqref{eq:definition of g}.
In fact, we will prove a result which is slightly more precise in term of uniformity in~$D$ and~$T$.
\begin{proposition} \label{prop:bound-largechars-rational}
  Let~\( A, D, T \geq 1 \) be fixed.
  Then we have, for~\( x \geq 3 \), \( I \subset [ x/2, x ] \) and \( f \in \F_D^{\d_\Q}(A) \) as described above, the following estimate,
  \begin{equation} \label{eq:bound-largechars-rational}
    | \Sigma_f(I; a, h; R) | \leq C \d( (a, h) ) D^\frac52 \frac{ x (\log x)^{ B + \omega(D) } }{ R^\frac12 } \| v \|_1 \quad \text{for} \quad 1 \leq a, |h|, R \leq x^\delta,
  \end{equation}
  where \( \delta > 0 \) is some absolute constant, and where \( B, C > 0 \) are constants which depend only on \(A\)~and~\(T\).
\end{proposition}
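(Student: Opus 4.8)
The plan is to reduce the estimate for $f = \prod_j L(s,\chi_j)^{b_j}$ with rational exponents $b_j = r_j + u_j/v_j$ to the bilinear and type~I/II inputs collected in Section~\ref{section: auxiliary estimates}, using the combinatorial identity of Theorem~\ref{th:combinatorial-identity} as the decomposition engine. First I would clear denominators by setting $v := \operatorname{lcm}(v_1,\dots,v_T)$, so that every $b_j$ can be written with common denominator $v$; this costs only a factor $\|v\|_1$ in the end (the number of convolution factors produced is $O(v)$ per character, and $v \ll \prod v_j \ll \|v\|_1^T$, but one has to be a little careful and only lose $\|v\|_1$, which is why the identity in Theorem~\ref{th:combinatorial-identity} was engineered to be sensitive only to the archimedean size of $u/v$). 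Applying Theorem~\ref{th:combinatorial-identity} to each factor $\tau_{b_j}^{\chi_j}$, with the cutoff parameter $K$ chosen as a large constant depending on $A$, $T$ and the target exponent $B$, expresses $f$ as a bounded ($O_{A,T,K}(1)$) linear combination of Dirichlet convolutions of the shape
\[
  \chi \cdot \big( g_1 \ast \cdots \ast g_L \ast m_1 \ast \cdots \ast m_J \big),
\]
where each $m_i$ is supported on $[1,x^{1/K}]$ (these are the $\tau_{-1/v}$-type factors coming from $G(s)$), each $g_i$ is $\mathbf 1$ or a $\tau_k$ with $k \leq \max_j(r_j^+ + 1)$, $\chi$ is a character mod $D$, and $L \leq K$, $J \ll \|v\|_1$. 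The coefficients are rational and $O(1)$, and $|g_1 \ast \cdots \ast g_L \ast m_1 \ast \cdots|$ is dominated by $\tau_{O_{A,T,K}(1)}(n)$ on primes, hence by $\tau_{A'}(n)$ for a suitable $A' = A'(A,T,K)$ throughout.

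Next I would run the standard combinatorial sieving argument (following \cite{Fou85, BFI86}, as treated in \cite{D-disp, T-dk}) to classify each such convolution summand. Combining the short factors $m_i$ (and splitting dyadically — this costs a power of $\log x$ absorbed into $B$), one either (type~I) can extract a single smooth divisor $\gg x^{1/3+\varepsilon}$, in which case Lemma~\ref{lem:Delta for large M} applies directly; or one can extract the product of two smooth divisors with product $\geq x^{1/2}$, in which case Lemma~\ref{lem:Delta for large M_1, M_2} applies (after possibly regrouping remaining factors into one of the $v_i$'s); or (type~II) none of these is possible, which — by a Dirichlet-hyperbola / Buchstab-type pigeonholing and the fact that all the short $m_i$-components live below $x^{1/K}$ — forces the existence of a factorization $n = n_1 n_2$ with $n_1 \in [x^\eta, x^{1/3-\eta}]$ for some $\eta = \eta(K) > 0$, and then Lemma~\ref{lem:equidistrib-bilin} gives the bound $\ll \tau((a,h)) R^{-1/2} X (\log X)^B$. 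The key elementary point is that with $K$ large enough the "neither type~I nor type~II" case cannot occur: an integer $\leq x$ with no divisor in $[x^\eta, x^{1/3-\eta}]$ and all but $O(1)$ prime-power components $\leq x^{1/K}$ must have a divisor $\gg x^{1/3}$, and in fact a pair of divisors with product $\geq x^{1/2}$.

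The bookkeeping then goes: there are $O_{A,T,K}(1)$ summands in the combinatorial identity per character, times $O(\|v\|_1)$ from the number of convolution factors, times $T$ characters, times $O((\log x)^{O(1)})$ from dyadic decompositions and from summing the type~I main-term-free remainders over the auxiliary variables; the $D$-dependence $D^{5/2}$ comes from Lemma~\ref{lem:Delta for large M_1, M_2}, the $\omega(D)$ in the exponent of $\log x$ comes from summing over divisors of $D^\infty$ (the $b^\circ$ factors) as in Lemma~\ref{lem:equidistrib-bilin}'s proof, and the final savings $R^{-1/2}$ is dictated by the weakest of the three lemmas, namely the type~II bound Lemma~\ref{lem:equidistrib-bilin}. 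One must also check that the type~I main terms $M_{f,v}$, $M_{f,v}^\chi$ produced by Lemmas~\ref{lem:Delta for large M} and~\ref{lem:Delta for large M_1, M_2} reassemble, via the comparison Lemmas~\ref{lem:compare-MTsmooth-dt} and~\ref{lem:compare-MTsmooth-chi-dt}, into exactly the sum $\sum_n f(n)\dt_h(an;R)$, so that they cancel against $\dt_h$ in the definition of $\Delta_h$ and leave only admissible errors; this is where Lemma~\ref{lem:rough-bound-sum-d-dt} is used to control the tails, and it is the most technical (though not conceptually hard) part. The main genuine obstacle is the type~II step: ensuring the bilinear structure is clean enough — one component of size $x^{\eta}$ to $x^{1/3-\eta}$ with controlled coefficients $|\beta_n|, |\gamma_n| \leq \tau_{A'}(n)$ — which requires choosing $\delta$ small in terms of $\eta$ (hence of $K$, hence of $B$), exactly as in Lemma~\ref{lem:equidistrib-bilin}, and this is the reason the range $|h|, a, R \leq x^\delta$ shrinks as $B$ grows.
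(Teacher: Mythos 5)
Your overall strategy is the paper's: apply the Heath-Brown-type identity of Theorem~\ref{th:combinatorial-identity} to each factor $\d_{b_j}^{\chi_j}$, smooth-dyadically decompose the long (character-weighted) variables, and split into the three regimes handled by Lemma~\ref{lem:Delta for large M} (one smooth variable of size $\geq x^{1/3+\eta}$), Lemma~\ref{lem:Delta for large M_1, M_2} (two smooth variables with product $\geq x^{1/2}$), and Lemma~\ref{lem:equidistrib-bilin} (a bilinear block in $[x^\eta,x^{1/3-\eta}]$ pigeonholed from the short $\d_{-1/v_j}$-type factors, all of which lie below $x^{1/4}$). The attribution of $R^{-1/2}$ and $D^{5/2}$ is right; two cosmetic slips are that the $(\log x)^{\omega(D)}$ comes from the sums over $D^\ast\mid D^\infty$ in the type~$\d_1$ and~$\d_2$ cases rather than from Lemma~\ref{lem:equidistrib-bilin}, and Lemma~\ref{lem:rough-bound-sum-d-dt} is not needed in this part of the argument (it enters only in the interpolation step and in the Linnik-identity proof).

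The one genuine gap is your opening move of clearing denominators to $v=\operatorname{lcm}(v_1,\dots,v_T)$. This is both unnecessary and, as written, does not deliver the factor $\|v\|_1$ in the statement. With a common denominator, the identity produces on the order of $(|r_j|+K)\,v$ short $\d_{-1/v}$-factors for the $j$-th character, hence $\asymp_{A,T}\operatorname{lcm}(v_1,\dots,v_T)$ of them in total, and this count enters the final bound \emph{linearly}: in the type~II regime the bilinear block is located as the minimal partial product of the short variables exceeding $x^\eta$, and one must sum the type~II estimate over the $k'$ possible positions of that block. So your argument as described proves the proposition with $\operatorname{lcm}(v_j)$ in place of $\|v\|_1$ --- already quadratically worse for $T=2$ and $v_1, v_2$ distinct primes --- and your parenthetical ``one has to be a little careful and only lose $\|v\|_1$'' is exactly the point left unproved. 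The repair is to skip the common-denominator step entirely: apply Theorem~\ref{th:combinatorial-identity} to each $\d_{b_j}^{\chi_j}$ with its own denominator $v_j$ (and with $K=4$ fixed; there is no need for $K$ to depend on a ``target'' $B$, since $B$ is an output depending only on $A$ and $T$ and the saving is carried by $R$), so that the total number of short factors is at most $4\|v\|_1+\sum_j|r_j|v_j\ll_A\|v\|_1$. This is precisely what the paper does, and with that change your argument matches its proof.
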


The rest of this section is now concerned with proving Proposition~\ref{prop:bound-largechars-rational}.

\subsection{Application of the combinatorial identity}

Denote~$\d_z^\chi(n) := \d_z(n) \chi(n)$, so that
\begin{equation} \label{eq:g as a convolution}
  f(n) = \d_{b_1}^{ \chi_1 } \ast \cdots \ast \d_{b_T}^{ \chi_T }.
\end{equation}
The expression on the left hand side of~\eqref{eq:bound-largechars-rational} now reads
\begin{equation} \label{eq:main expression after writing out the convolution}
  \Sigma_f(I; a, h; R) = \sum_{ m_1 \cdots m_T \in I } \d_{b_1}^{ \chi_1 }( m_1 ) \cdots \d_{b_T}^{ \chi_T }( m_T ) \Delta_h( a m_1 \cdots m_T ; R ).
\end{equation}
By Theorem~\ref{th:combinatorial-identity} with~$K=4$ we can write~\( \d_{b_j}^{ \chi_j }( m_j ) \) as
\begin{equation} \label{eq:decomp-combin}
  \d_{b_j}^{ \chi_j }( m_j ) = \sum_{\ell=1}^4 c_{\ell, j} \mathop{\sum \cdots \sum}_{ \substack{ m_1 \dotsb m_{ k_{\ell, j} } n_1 \dotsb n_{ k'_{\ell, j} } = m_j \\ n_1, \ldots, n_{ k'_{\ell, j} } \leq x^{1/4} }} \chi_j(m_1) \cdots \chi_j( m_{ k_{\ell, j} } ) \d_{-\frac1{v_j}}^{\chi_j}(n_1) \dotsb \d_{-\frac1{v_j}}^{\chi_j}(n_{ k'_{\ell, j} }),
\end{equation}
where $ ( k_{\ell, j} )_{\ell = 1}^4 $~and~$ ( k'_{\ell, j} )_{\ell = 1}^4 $ are two sequences of integers satisfying
\[ 0 \leq k_{ \ell, j } \leq |r_j| + 4, \quad 1 \leq k_{\ell, j}' \leq ( |r_j| + 4 ) v_j, \]
and where \( ( c_{\ell, j} )_{\ell = 1}^4 \) is a set of complex numbers whose moduli are bounded in terms of~\(A\).
We replace each factor~\( \d_{b_j}^{ \chi_j }(m_j) \) in~\eqref{eq:main expression after writing out the convolution} by its decomposition, and after expanding the resulting expression, we end up with a linear combination (whose coefficients are bounded by~$\O_A(1)$) of~$\O_T(1)$ sums of the form
\begin{equation}
\Xi := \sum_{ \substack{ m_1 \cdots m_k n_1 \cdots n_{k'} \in I \\ n_1, \ldots, n_{k'} \leq x^{1/4} } } \sigma_1(m_1) \cdots \sigma_k(m_k) \varrho_1(n_1) \cdots \varrho_{k'}( n_{k'} ) \Delta(a m_1 \cdots m_k n_1 \cdots n_k'; R),\label{eq:def-Xi}
\end{equation}
where each function \( \sigma_i \) is some Dirichlet character mod~\(D\), where each function~\( \varrho_i \) is equal to~\( \d_{ -1 / v_j }^{ \chi_j } \) for some~\(j\), and where \(k\)~and~\(k'\) are integers bounded by
\[ 0 \leq k \leq 4 T + \| r \|_1 \quad \text{and} \quad 1 \leq k' \leq 4 \| v \|_1 + \sum_{1 \leq j \leq T} |r_j| v_j. \]
We consider each sum~$\Xi$ separately.

Out of technical reasons, it will be necessary to use a smooth dyadic decomposition for the variables~\( m_1, \ldots, m_k \).
Let \( u : (0, \infty) \to \R \) be a smooth and compactly supported function, which satisfies
\[ \operatorname{supp} u \subset [1/4, 2] \quad \text{and} \quad \sum_{\ell \in \Z} u\left( \frac\xi{2^\ell} \right) = 1 \quad \text{for all} \quad \xi \in (0, \infty), \]
and define
\[ u_0(\xi) := \sum_{\ell \leq 0} u\left( \frac\xi{ M_\ell } \right) \quad \text{and} \quad u_\ell(\xi) := u\left( \frac\xi{ M_\ell } \right) \quad \text{for} \quad \ell > 0, \]
where we have set
\[ M_\ell := x^{\frac14 + \eta} 2^\ell, \]
with~\( 0 < \eta < \frac1{24} \) an arbitrary, but fixed constant.
For a \(k\)-tuple~\( \ell = (\ell_1, \ldots, \ell_k) \in \N^k \), we then define
\[ \Xi_\ell := \sum_{ \substack{ m_1 \cdots m_k n_1 \cdots n_{k'} \in I \\ n_1, \ldots, n_{k'} \leq x^{1/4} } } u_{\ell_1}(m_1) \sigma_1(m_1) \cdots u_{\ell_k}(m_k) \sigma_k(m_k) \varrho_1(n_1) \cdots \varrho_{k'}( n_{k'} ) \Delta(a m_1 \cdots m_k n_1 \cdots n_k'; R), \]
so that the sum~\(\Xi\) can be split as
\[ \Xi = \sum_{ \ell \in \N^k } \Xi_\ell. \]
Note that this last sum is in fact finite, since \( \Xi_\ell \) becomes empty if the coordinates of~\(\ell\) are large enough, namely if~\( \ell_1, \ldots, \ell_k \gg \log x \).
We will now estimate the sums~\( \Xi_\ell \) in different ways, depending on the sizes of the supports of the variables~\( m_i \).

\subsection{Case I}\label{sec:case-i}

First assume that \(\ell\) has at least one coordinate, say~\( \ell_1 \), satisfying~\( M_{\ell_1} \geq x^{\frac13 + \eta} \).
Let~\( m_0 := m_2 \dotsb m_k n_1 \dotsb n_{k'} \).
Denoting~\( \sigma_1 = \chi_j \) for some~\(j\), we can use Lemma~\ref{lem:Delta for large M} with~\( X = ax \), \( b = a m_0 \) and~\( M = M_{\ell_1} \) to get
\begin{align*}
  \sum_{ m_1: \,\, m_0 m_1 \in I } u_{\ell_1}(m_1) \sigma_1(m_1) \Delta_h(a m_0 m_1; R) &\ll_{\varepsilon, A} x^\varepsilon \left( D a^\frac13 x^\frac13 + ( a m_0, h D^\infty ) \frac{M_{\ell_1}}{ x^\frac12 } + D^\frac12 R^\frac32 \right).
\end{align*}
This leads to
\begin{equation} \label{eqn: bound for Case I}
  \Xi_\ell \ll_\varepsilon x^\varepsilon \left( D a^\frac13 x^{1 - \eta} + ( a, h D^\infty ) (\log x)^{ \omega(D) } x^\frac12 + x^{ \frac23 - \eta} D^\frac12 R^\frac32 \right),
\end{equation}
where we have made use of the fact that
\[ \sum_{ m_0 \leq \frac x{ M_{\ell_1} } } (m_0, h D^\infty) \leq \sum_{ \substack{ D^\ast \mid D^\infty \\ D^\ast \leq x } } D^\ast \sum_{ m_0 \leq \frac x{ D^\ast M_{\ell_1} } } (m_0, h) \ll_\varepsilon h^\varepsilon \frac x{M_{\ell_1}} \sum_{ \substack{ D^\ast \mid D^\infty \\ D^\ast \leq x } } 1 \ll_\varepsilon \frac{ ( \log x )^{ \omega(D) } x^{1 + \varepsilon} }{ M_{\ell_1} }. \]

\subsection{Case II}\label{sec:case-ii}

Next assume that \(\ell\) has at least two non-zero coordinates, say \( \ell_1 \geq \ell_2 \geq 1\).
We can also assume that~\( x^{\frac14 + \eta} \ll M_{\ell_1}, M_{\ell_2} \ll x^{\frac13 + \eta} \), since the case of larger~\( M_{\ell_1} \) and~\(M_{\ell_2}\) is already treated above.
Let \( m_0 := m_3 \dotsb m_k n_1 \dotsb n_{k'} \). We use Lemma~\ref{lem:Delta for large M_1, M_2} with~\( X = ax \) and~\( b = a m_0 \), which gives
\begin{multline*}
  \sum_{ \substack{ m_1, m_2: \\ m_0 m_1 m_2 \in I } } u_{\ell_1}(m_1) \sigma_1(m_1) u_{\ell_1}(m_1) \sigma_2(m_2) \Delta_h(a m_0 m_1 m_2; R) \\
    \ll_{\varepsilon, A} ( a m_0, D^\infty ) x^\varepsilon \left( D^\frac52 (ax M_{\ell_1} M_{\ell_2})^\frac13 + (h, a m_0) R^\frac32 \frac{ M_{\ell_1} M_{\ell_2} }{ a^\frac12 x^\frac12 } \right),
\end{multline*}
so that altogether we are led to
\begin{equation} \label{eqn: bound for Case II}
  \Xi_\ell \ll_{\varepsilon, A} (a, h) (a, D^\infty) (\log x)^{ \omega(D) } x^\varepsilon \left( D^\frac52 a^\frac13 x^{ 1 - \frac 43 \eta } + (a, h) R^\frac32 x^\frac12 \right).
\end{equation}

\subsection{Case III}\label{sec:case-iii}

Finally, we need to consider the case, where \(\ell\) has at most one non-zero coordinate, say~\( \ell_1 \), for which we have~\( M_{\ell_1} \ll x^{\frac13 + \eta} \).
We split the sum~\( \Xi_\ell \) into two parts,
\[ \Xi_j =: \Xi_\ell^{ (1) } + \Xi_\ell^{ (2) }, \]
according to whether~\( n_1 \cdots n_{k'} > x^\eta \) or~\( n_1 \cdots n_{k'} \leq x^\eta \).

We look first at~\( \Xi_\ell^{ (1) } \).
We split this sum according to the value of
$$ \mu = \min\{1\leq \mu' \leq k':\ n_1 \dotsb n_{\mu'} > x^\eta\}, $$
and write accordingly
$$ \Xi_\ell^{ (1) } =: \sum_{\mu=1}^{k'} \Xi_\ell^{ (1) }(\mu). $$
After defining
\begin{align*}
  \beta_m &:= \ssum{ m_1 \dotsb m_k n_{\mu+1} \dotsb n_{k'} = m \\ n_{\mu+1}, \dotsc, n_{k'} \leq x^{1/4} } u_{\ell_1}(m_1) \sigma_1(m_1) \cdots u_0(m_k) \sigma_k(m_k) \varrho_{\mu + 1} (n_{\mu+1} ) \dotsb \varrho_{k'}( n_{k'} ),
  \intertext{and}
  \gamma_n &:= \ssum{ n_1 \dotsb n_{\mu} = n \\ n_1 \dotsb n_{\mu-1} \leq x^\eta, \,\, n_1, \dotsc, n_{\mu} \leq x^{1/4} } \varrho_1(n_1) \cdots \varrho_\mu( n_\mu ),
\end{align*}
and renaming~$n\gets n_1 \dotsb n_{\mu}$ and~$m \gets m_1 \dotsb m_k n_{\mu+1} \dotsb n_{k'}$, we can write~\( \Xi_\ell^{ (1) }( \mu ) \) as
$$ \Xi_\ell^{ (1) }(\mu) = \ssum{m, n :\ mn\in I \\ x^\eta < n \leq x^{1/4+\eta}} \beta_m \gamma_n \Delta_h(amn;R). $$
Note that~$\gamma_n=0$ if~$n>x^{1/4+\eta}$.
Moreover, we can bound the quantities \( \beta_m \)~and~\( \gamma_n \) by
$$ \abs{\beta_m} \leq \d_{2 \| r \|_1 + 8 T }(m), \qquad \abs{\gamma_n} \leq \d_{ \| r \|_1 + 4T }(n). $$
Hence we can apply Lemma~\ref{lem:equidistrib-bilin} with~$ A \gets 2 \| r \|_1 + 8 T $, and we see that
$$ \Xi_\ell^{ (1) }(\mu) \ll \d((a, h)) R^{-1/2} x (\log x)^{B_1} \quad \text{for} \quad 1 \leq a, |h|, R \leq x^{\delta_1}, $$
where \( \delta_1, B_1 > 0 \) are certain constants which depend solely on \(\eta\)~and~\(A\).
Summing over~$\mu$, we deduce
\begin{equation} \label{eq:bound-typeII-rational}
  \Xi_\ell^{ (1) } \ll_A \d((a, h)) R^{-1/2} x (\log x)^{B_1} \|v\|_1 \quad \text{for} \quad 1 \leq a, |h|, R \leq x^{\delta_1}.
\end{equation}

The other sum~\( \Xi_\ell^{ (2) } \) can be estimated similarly -- the role of the variables~\( n_1, \ldots, n_{k'} \) is now played by the variables~\( m_2, \ldots, m_k \).
Eventually, we get
\begin{equation} \label{eq:bound-typeII-rational-2}
  \Xi_\ell^{ (2) } \ll_A \d((a, h)) R^{-1/2} x (\log x)^{B_2} \quad \text{for} \quad 1 \leq a, |h|, R \leq x^{\delta_2},
\end{equation}
where \( \delta_2, B_2 > 0 \) are certain constants which again depend solely on \(\eta\)~and~\(A\).

\subsection{Conclusion}

Grouping the different bounds \eqref{eqn: bound for Case I}--\eqref{eq:bound-typeII-rational-2}, setting \( B := \max(B_1, B_2) \) and choosing~\( \delta > 0 \) small enough, we get
\[ \Xi \ll \d( (a, h) ) D^\frac52 R^{ -\frac12 } x (\log x)^{ B + \omega(D) } \| v \|_1 \quad \text{for} \quad 1 \leq a, |h|, R \leq x^\delta, \]
with the implicit constant depending only on~\( A\) and~\(T\).
This finally proves Proposition~\ref{prop:bound-largechars-rational}.

\section{Interpolation to complex parameters} \label{section: interpolation to complex parameters}

Let~\( r_1, \ldots, r_{ \varphi(D) } \) be the residues mod~\(D\) which are relatively prime to~\(D\).
Any~\( f \in \mathcal F_D^\omega(A) \) is given by
\begin{align} \label{eq:functions in F''_D(A)}
  \sum_{n = 1}^\infty \frac{ f(n) }{n^s} = \prod_{j = 1}^{ \varphi(D) } \prod_{ p \equiv r_j \bmod D } \left( 1 + \frac{ z_j }{ p^s - 1 } \right),
\end{align}
for~\( \bz = (z_1, \ldots, z_{ \varphi(D) }) \in\C^{\vphi(D)} \), with~\(\abs{z_j}\leq A\).
After setting
\begin{equation} \label{eq: definition of omega_r}
  \omega_r(n) := \#\left\{ p \text{ prime} \,:\, p \mid n, \,\, p \equiv r \bmod D \right\},
\end{equation}
we can also write
\[ f(n) = \sum_{ n_1 \cdots n_{ \varphi(D) } = n } \prod_{ j = 1 }^{ \varphi(D) } {z_j}^{ \omega_{r_j}(n_j) }. \]
Our aim here is to show that the bound~\eqref{eq:main bound} holds for~\( \Sigma_f(I; a, h; R) \), for all~\( f \in \mathcal F_D^\omega(A) \).
By Lemma~\ref{lemma: F' or F'' implies F} this will imply Proposition~\ref{prop:bound-largechars}.

Let~\( \chi_1, \ldots, \chi_{ \varphi(D) } \) be the Dirichlet characters mod~\(D\), let~$Q$ be the unitary matrix
\[ Q := \frac1{ \sqrt{ \varphi(D) } } \begin{pmatrix}
     \chi_1( r_1 ) & \chi_2( r_1 ) & \cdots & \chi_{ \varphi(D) }( r_1 ) \\
     \chi_1( r_2 ) & \chi_2( r_2 ) & \cdots & \chi_{ \varphi(D) }( r_2 ) \\
     \vdots & \vdots & \ddots & \vdots \\
     \chi_1( r_{ \varphi(D) } ) & \chi_2( r_{ \varphi(D) } ) & \cdots & \chi_{ \varphi(D) }( r_{ \varphi(D) } )
   \end{pmatrix}, \]
and let \( M_Q : \C^{ \varphi(D) } \to \C^{ \varphi(D) } \) be the bijective linear map associated to~$Q$.

Let~\( K \geq 1 \).
We define~\( \F_D^\omega(A, K) \) to be the set of functions~\( f \in \mathcal F_D^\omega(A) \) of the same form as in~\eqref{eq:functions in F''_D(A)}, but with the additional property that the parameters~\( \bz \) are given by
\[ \bz = M_Q( \bb ) \]
for a tuple of rational numbers \( \bb = (b_1, \ldots, b_{ \varphi(D) }) \in \Q^{\vphi(D)} \) satisfying
\[ | b_j | \leq A \quad \text{and} \quad b_j = \frac{u_j}{v_j} \quad \text{with} \quad u_j, v_j \in \Z \quad \text{and} \quad | v_j | \leq K, \]
for all~\( j = 1, \ldots, \varphi(D) \).
By Proposition~\ref{prop:bound-largechars-rational}, Lemma~\ref{lemma:replacing f by g} and Lemma~\ref{lemma: existence of g_1 and g_2}, we deduce that the bound~\eqref{eq:main bound} holds for all~\( f \in \F_D^\omega(A, K) \) in the following form.

\begin{proposition} \label{lemma:bound for f in tilde F}
  Let~\( A, D \geq 1 \) be fixed.
  For~\( K \geq 1 \), \( x \geq 3 \), \( I \subset [ x/2, x ] \) and~\( f \in \F_D^\omega(A, K) \), we have
  \begin{equation}
    | \Sigma_f(I; a, h; R) | \leq C K \d( (a, h) ) \frac{ x (\log x)^B }{ R^\frac12 } \quad \text{for} \quad 1 \leq a, |h|, R \leq x^\delta,
  \end{equation}
  where \( \delta > 0 \) is some absolute constant, and where \( B, C > 0 \) are constants which depend only on~\(A\) and~\(D\).
\end{proposition}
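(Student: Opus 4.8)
The plan is to deduce the bound from Proposition~\ref{prop:bound-largechars-rational}, which handles functions built from $L$-functions with rational exponents, via the replacement principle of Lemma~\ref{lemma:replacing f by g}. Concretely, to a given $f\in\F_D^\omega(A,K)$ I would attach a companion function $g$ lying in $\F_D^{\d_\Q}(A')$ for some $A'$ depending only on $A$ and $D$, agreeing with $f$ at all primes $p\nmid D$ and built from $L$-functions mod~$D$ with rational exponents of denominator $\ll_D K$; Proposition~\ref{prop:bound-largechars-rational} then bounds $\Sigma_g$, and Lemma~\ref{lemma:replacing f by g} transfers the estimate to $\Sigma_f$.

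The companion is produced essentially as in the proof of Lemma~\ref{lemma: existence of g_1 and g_2} (note $f\in\F_D(A')$ for some $A'\ll_{A,D}1$): one reads off the prime data $v_f(r_j)=f(p)=z_j$ for $p\equiv r_j\bmod D$ and defines $g$ through $\sum_n g(n)n^{-s}=\prod_{\chi\bmod D}L(s,\chi)^{b_\chi}$, with the $b_\chi$ the discrete Fourier coefficients of $(v_f(r_j))_j$ over $(\Z/D\Z)^\times$. The extra input for functions of $\F_D^\omega(A,K)$ is that, by the very definition of this class, the vector $\bz=(z_j)_j$ is the image $M_Q(\bb)$ of a rational vector $\bb$ with $|b_j|\le A$ and denominators $\le K$; since $M_Q$ is the fixed, $D$-dependent passage to the character basis, the resulting exponents $b_\chi$ are rational with denominators $\ll_D K$, so $g\in\F_D^{\d_\Q}(A')$ is built from $T=\varphi(D)$ characters each of denominator $\ll_D K$, hence $\|v\|_1\ll_D K$ in the notation of Section~\ref{section:The case of rational parameters}. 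As in the proof of Lemma~\ref{lemma: existence of g_1 and g_2} one checks that $|g(n)|\le\d_M(n)$ with $M\ll_{A,D}1$, and, using that $(f\ast g^{-1})(p)=f(p)-g(p)=0$ for $p\nmid D$ so that only the finitely many $p\mid D$ contribute, that $H:=\sum_n|(f\ast g^{-1})(n)|n^{-2/3}\ll_{A,D}1$; the trivial bound $\dt_h(n;R)\ll_\ee n^\ee R^{1+\ee}$ together with $|g(n)|\le\d_M(n)$ gives the auxiliary estimate on $\Sigma_g$ needed to run Lemma~\ref{lemma:replacing f by g}.

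It then remains to invoke the two results in turn. Since $D$ is fixed, the factors $D^{5/2}$ and $(\log x)^{\omega(D)}$ in Proposition~\ref{prop:bound-largechars-rational} are absorbed into the constants, and with $\|v\|_1\ll_D K$ that proposition gives
\[ |\Sigma_g(I;a,h;R)|\ll_{A,D}K\,\d((a,h))\,\frac{x(\log x)^{B_0}}{R^{1/2}}\qquad(1\le a,|h|,R\le x^\delta), \]
which is exactly hypothesis~\eqref{eq:assumption on g} of Lemma~\ref{lemma:replacing f by g} with $\varrho=\tfrac12$, exponent $B_0$, and constant $C_0K$. Applying Lemma~\ref{lemma:replacing f by g} with $\sigma=\tfrac23$ to the pair $(f,g)$ — whose hypotheses \ref{enum: first condition}--\ref{enum: second condition} were verified above — produces
\[ |\Sigma_f(I;a,h;R)|\le HC_0C'\,K\,\d((a,h))\,\frac{x(\log x)^{B_0}}{R^{1/2}}\qquad(1\le a,|h|,R\le x^{\delta'}), \]
with $H,C',\delta'$ depending only on $\varrho,\sigma,M$, hence only on $A$ and $D$. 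This is precisely the asserted inequality, with $B:=B_0$ and $C:=HC_0C'$ depending only on $A,D$, and an absolute $\delta:=\delta'>0$.

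Beyond routine tracking of $D$-dependent constants, the main point to get right is that the exponents of the companion $g$ have denominators growing only \emph{linearly} in $K$, so that $\|v\|_1$ contributes exactly the single power of $K$ that appears in the statement. This is the whole reason $\F_D^\omega(A,K)$ is parametrised in the character basis via $M_Q$: the change of basis between the $\bz$-coordinates (in which $\F_D^\omega$ is naturally described) and the $\bb$-coordinates (in which rationality and denominators are controlled) is a fixed, $D$-dependent linear map, which can inflate denominators by at most an $O_D(1)$ factor.
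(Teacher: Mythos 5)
Your proposal is correct and follows essentially the same route as the paper, whose proof of this proposition is precisely the combination you describe: apply Proposition~\ref{prop:bound-largechars-rational} to the companion function built as in Lemma~\ref{lemma: existence of g_1 and g_2} (whose character-basis exponents are, up to the fixed $D$-dependent normalisation of $Q$, the rational coordinates $b_j$ of denominator $\leq K$, giving $\|v\|_1\ll_D K$), and then transfer the bound to $f$ via Lemma~\ref{lemma:replacing f by g}. The only caveat — inherited from the paper's own definition rather than introduced by you — is the unitary normalisation $1/\sqrt{\varphi(D)}$ in $Q$, which should be read so that the transformed exponents remain rational with denominators $\ll_D K$, exactly as your final paragraph assumes.
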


Our goal is to interpolate this result to all functions in~\( \mathcal F_D^\omega(A) \).
Let~\( f \in \mathcal F_D^\omega(A) \) be fixed, with~\( \bz \) as in~\eqref{eq:functions in F''_D(A)}.
For~\( L \in [1, \infty] \), we define two polynomials in the variables~\( \bZ = (Z_1, \ldots, Z_{ \varphi(D) }) \) as follows,
\[ P_L( \bZ ) := \sum_{ \substack{ n \in I \\ \forall j, \omega_{r_j}(n) \leq L } } \sum_{ n_1 \cdots n_{ \varphi(D) } = n } \prod_{ j = 1 }^{ \varphi(D) } {Z_j}^{ \omega_{r_j}(n_j) } \Delta_h(an; R), \qquad \tilde P_L( \bZ ) := P_L( M_Q(\bZ) ). \]
By definition, both these polynomials have degree at most~\(L\) in each variable.
Furthermore, let
\[ \bb := M_Q^{-1}( \bZ ), \]
and note that~\( \|\bb\|_\infty \leq D^\frac12 A \).
Using this notation, we can now write the sum~\( \Sigma_f(I; a, h; R) \) simply as
\[ \sum_{n \in I} f(n) \Delta_h(an; R) = \tilde P_\infty( \bb ). \]

In order to have better control over the degree of~\( \tilde P_\infty( \bZ ) \), we cut off all the terms of degree larger than some fixed real number~\( L \geq 1 \).
For a tuple~$\bzt = (\zeta_1, \dotsc, \zeta_{\vphi(D)})$ satisfying~\( | \zeta_j | \leq A D^\frac12 \) and any real number~\( E \geq 1 \), this leads to an error term of the following form,
\begin{align*}
  | \tilde P_\infty( \bzt ) - \tilde P_L( \bzt ) | &\leq \sum_{ \substack{ n \in I \\ \omega(n) > L } } \d_D(n) ( A D )^{ \omega(n) } \left| \Delta_h(an; R) \right| \\
    &\leq E^{-L} \sum_{ n \leq x } \d_D(n) (ADE)^{ \omega(n) } \left| \Delta_h(an; R) \right| \\
    &\leq E^{-L} \left( \sum_{ n \leq x } \d_{AD^2E}(n)^2 \right)^\frac12 \left( \sum_{ n \leq x } \left| \Delta_h(an; R) \right|^2 \right)^\frac12.
\end{align*}
The different factors can be estimated via~\cite[Theorem~II.6.1]{Ten15}, and Lemma~\ref{lem:rough-bound-sum-d-dt}, and we get
\begin{align}
  | \tilde P_\infty( \bzt ) - \tilde P_L( \bzt ) | & \ll E^{-L} \left( x (\log x)^{ (ADE)^4 - 1 } \right)^\frac12 \left( x (\log x)^4 \d( (a, h) )^2 \right)^\frac12 \nonumber \\
    &\ll E^{-L} \d( (a, h) ) x (\log x)^{ \frac{ (ADE)^4 }2 + 2 }, \label{eq:bound after cutting off}
\end{align}
where the implicit constants depend at most on~\(A\),~\(E\) and~\(D\).

Next, we set
\[ \beta_\ell := \frac{ 2 (\ell + 1) \lfloor A D^\frac12 \rfloor }{ L + 1 } - \lfloor A D^\frac12 \rfloor \quad \text{for} \quad \ell = 0, \ldots, L. \]
Obviously, all these numbers are bounded by~\( | \beta_\ell | \leq A D^\frac12 \), and are rational numbers with denominators not larger than~\( L + 1 \).
Furthermore, we have the bound
\[ | \beta_{\ell_1} - \beta_{ \ell_2 } | \geq \frac{ A D^\frac12 }{ 2L } | \ell_1 - \ell_2 | \quad \text{for} \quad \ell_1 \neq \ell_2. \]
For any tuple~\( \bell = ( \ell_1, \ldots, \ell_{ \varphi(D) } ) \in \{ 0, \ldots, L \}^{ \varphi(D) } \), denote~$\bbt_\bell = ( \beta_{\ell_1}, \ldots, \beta_{ \ell_{ \varphi(D) } } )$. The value~\( \tilde P_\infty( \bbt_\bell ) \) can be interpreted as an instance of the sum~\( \Sigma_{\tilde f}(I; a, h; R) \) for an appropriate function~\( \tilde f \in \F_D^\omega(AD, L + 1) \),
\[ \tilde P_\infty( \bbt_\bell ) = \Sigma_{\tilde f}(I; a, h; R). \]
Hence, by Proposition~\ref{lemma:bound for f in tilde F} and the estimate in~\eqref{eq:bound after cutting off} we can deduce
\begin{equation} \label{eq: known bound for P_L}
  \tilde P_L( \bbt_\bell ) \ll_{A, D} \d( (a, h) ) x ( \log x )^{ \frac{ ( AD E )^4 }2 + B } \left( \frac L{ R^\frac12 } + \frac1{ {E}^L } \right),
\end{equation}
uniformly for~\( 1 \leq a, |h|, R \leq x^\delta \).

By Lagrange interpolation, we bring~\( \tilde P_L( \bb ) \) into the following shape,
\[ \tilde P_L( \bb ) = \sum_{ \bell \in \{ 0, \ldots, L \}^{ \varphi(D) } } \tilde P_L( \bbt_\bell ) \prod_{ j = 1 }^{ \varphi(D) } \prod_{ \substack{ 0 \leq i \leq L \\ i \neq \ell_j } } \frac{ b_j - \beta_i }{ \beta_{\ell_j} - \beta_i }, \]
which is allowed since the Vandermonde determinant associated to~$(\beta_\ell)$ does not vanish. We can now estimate~\( \tilde P_L( \bb ) \) via the already known bound~\eqref{eq: known bound for P_L} for the expressions~\( \tilde P_L( \bbt_\bell ) \).
Namely, we have
\begin{align*}
  | \tilde{P_L} ( \bb ) | &\leq \sum_{ \bell \in \{ 0, \ldots, L \}^{ \varphi(D) } } \left| \tilde P_L( \bbt_\bell ) \right| \prod_{ j = 1 }^{ \varphi(D) } \prod_{ \substack{ 0 \leq i \leq L \\ i \neq \ell_j } } \frac{ | b_j - \beta_i | }{ | \beta_{\ell_j} - \beta_{\ell_i} | } \\
    &\ll \d( (a, h) ) x ( \log x )^{ \frac{ ( AD E )^4 }2 + B } \left( \frac L{ R^\frac12 } + \frac1{ E^L } \right) (4L)^{ L \varphi(D) } \sum_{ \ell \in \{ 0, \ldots, L \}^{ \varphi(D) } } \prod_{ j = 1 }^{ \varphi(D) } \prod_{ \substack{ 0 \leq i \leq L \\ i \neq \ell_j } } \frac1{ | \ell_j - i | } \\
    &\ll \d( (a, h) ) x ( \log x )^{ \frac{ ( AD E )^4 }2 + B } \left( \frac L{ R^\frac12 } + \frac1{ E^L } \right) \frac{ (8L)^{ L \varphi(D) } }{ (L!)^{ \varphi(D) } },
\end{align*}
which after using Stirling's approximation for the Gamma function simplifies to
\[ | \tilde{P_L}( \bb ) | \ll \d( (a, h) ) x ( \log x )^{ \frac{ ( AD E )^4 }2 + B } \left( \frac1{ R^\frac12 } + \frac1{ E^L } \right) (4e)^{ 2 D L }, \]
with the implicit constant depending at most on \(A\), $E$ and~\(D\).

After adding all the terms we had cut off earlier, we are finally led to
\[ \Sigma_f(I; a, h; R) \ll_{A, D, E} \d( (a, h) ) x ( \log x )^{ \frac{ ( AD E )^4 }2 + B } \left( \frac1{ R^\frac12 } + \frac1{ E^L } \right) (4e)^{ 2 D L }. \]
With the choices
\[ L := \frac{ \log R }{12 D \log(4e) } \quad \text{and} \quad E := (4e)^{6D}, \]
and after reinterpreting the constant~\( B \), we get
\[ \Sigma_f(I; a, h; R) \ll_{A, D} \d( (a, h) ) \frac{ x ( \log x )^B }{ R^\frac13 } \quad \text{for} \quad 1 \leq a, |h|, R \leq x^\delta, \]
which is exactly the statement we wanted to prove.

\section{Proof of~Theorem~\ref{th:main} using Linnik's identity} \label{section: proof using Linnik's identity}

We now sketch how Theorem~\ref{th:main} can alternatively be proven using Theorem~\ref{th:combinatorial-identity-linnik}.
The details of the computations being very similar, we will restrict to discussing the main differences in the arguments.

As mentioned above, it is enough to consider the case~\( f \in \mathcal F_D^\d(A) \), or in other words we can assume that~\( f = \d_{b_1}^{\chi_1} \ast \cdots \ast \d_{b_T}^{\chi_T} \), where \( \chi_1, \ldots, \chi_T \) are distinct Dirichlet characters mod~\(D\), and where \( b_1, \ldots, b_T \) are complex numbers whose moduli are bounded by~\(A\).
The sum in consideration is then given by
\[ \Sigma_f(I; a, h; R) = \sum_{ m_1 \cdots m_T \in I } \d_{b_1}^{ \chi_1 }( m_1 ) \cdots \d_{b_T}^{ \chi_T }( m_T ) \Delta_h( a m_1 \cdots m_T ; R ). \]
Here we replace each~\( \tau_{b_j}^{\chi_j}(m_j) \) by its decomposition as given in Theorem~\ref{th:combinatorial-identity-linnik} with~\( K = 4 \), and after expanding the resulting expression, we end up with a linear combination of sums of the form
\[ \Xi := \sum_{ \substack{ m_1 \cdots m_k n_1 \cdots n_T \in I \\ P^+(n_1 \cdots n_T) \leq x^{1/4} } } \sigma_1(m_1) \cdots \sigma_k(m_k) \rho_1(n_1) \cdots \rho_T(n_T) \Delta( a m_1 \cdots m_k n_1 \cdots n_T; R ), \]
where each function~\( \sigma_j \) is some Dirichlet character mod~\(D\), where each function~\( \rho_j \) is equal to~\( \tau_{b_j - \ell}^{\chi_j} \) for some \(j\) and~\( \ell \in [0, 3] \), and where~\( k \leq 3 T \).
We consider each sum~\( \Xi \) separately.

To each factor~\( \rho_j \) in the sum~\( \Xi \) we apply Lemma~\ref{lem:friable-well-fact} with~\( y = x^{1/4} \) and~\( w = x^\eta \) for some arbitrary, but fixed~\( \eta \in (0, 1/24) \).
By compacity, it follows that for each~\( j = 1, \ldots, T \) there exist arithmetic functions~\( \alpha_j \) and~\( \beta_j \), such that the sum~\( \Xi \) can be written as
\[ \Xi = \sum_{j = 1}^T \Xi_j^{ (1) } + \sum_{j = 1}^T \Xi_j^{ (2) } + \Xi^{ (3) }, \]
with
\begin{align*}
  \Xi_j^{ (1) } &:= \sum_{ \substack{ m n_1 \cdots n_T \in I \\ P^+( n_1 \cdots n_T ) \leq x^{1/4} \\ n_1, \ldots, n_{j - 1} \leq x^\eta, \,\, n_j > x^{1/\eta} \\ \exists p^k \| n_j, \,\, p^k > x^{1/4} } } ( \sigma_1 \ast \cdots \ast \sigma_k )(m) \rho_1(n_1) \cdots \rho_T(n_T) \Delta( a m n_1 \cdots n_T; R ), \\
  \Xi_j^{ (2) } &:= \sum_{ \substack{ m n_1 \cdots n_{j_1} n_j' n_j'' n_{j + 1} \cdots n_T \in I \\ P^+( n_1 \cdots n_{j_1} n_j' n_j'' n_{j + 1} \cdots n_T ) \leq x^{1/4} \\ n_1, \ldots, n_{j-1} \leq x^\eta, \,\, x^\eta < n_j' \leq x^{1/4 + \eta} } } \!\!\! ( \sigma_1 \ast \cdots \ast \sigma_k )(m) \Bigg( \prod_{ \substack{1 \leq k \leq T \\ k \neq j } } \rho_k(n_k) \Bigg) \alpha_j(n_j') \beta_j(n_j'') \Delta( a m n_1 \cdots n_T ; R ), \\
  \Xi^{ (3) } &:= \sum_{ \substack{ m n_1 \cdots n_T \in I \\ n_1, \ldots, n_T \leq x^\eta } } ( \sigma_1 \ast \cdots \ast \sigma_k )(m) \rho_1(n_1) \cdots \rho_T(n_T) \Delta( a m n_1 \cdots n_T; R ).
\end{align*}

The sums~\( \Xi_j^{ (1) } \) can be bound trivially.
Indeed, we note that if a prime power~\( p^k > y \) divides~\(n\), then since~\( P^+(n_j) \leq y \) we must have~\( k \geq 2 \).
Hence
\begin{align*}
  \Xi_j^{ (1) } &\leq \sum_{ \substack{ n \in I \\ \exists p^k \mid n: \,\, p^k > x^{1/4}, \,\, k \geq 2 } } \d_{ (A + 6) T }(n) | \Delta(an; R) | \\
    &\ll_\ee \left( \sum_{n \in I} | \Delta(an; R) |^2 \right)^\frac12 \Bigg( \sum_{ \substack{ n \in I \\ \exists p^k \mid n: \,\, p^k > x^{1/4}, \,\, k \geq 2 } } 1 \Bigg)^\frac12 \\
    &\ll_{A, T} x^{1-1/17} \tau( (a, h) ),
\end{align*}
which is an acceptable error term.

Concerning the sums~\( \Xi_j^{ (2) } \), we can bound them following the arguments of Case~III, Section~\ref{sec:case-iii}, since we have a variable localized in~$[x^\eta, x^{1/4+\eta}]$, and since~$1/4+\eta < 1/3$.
The remaining sum~\( \Xi^{ (3) } \), which is analogous to~\eqref{eq:def-Xi}, can be estimated for all sufficiently small~$\eta>0$ by the arguments of Sections~\ref{sec:case-i}, \ref{sec:case-ii} and \ref{sec:case-iii}, according to the size of the involved variables.
As a result, we get for these sums the estimate
\[ \Xi_j^{ (2) }, \Xi^{ (3) } \ll_{A, T} \tau( (a, h) ) \frac{ x (\log x)^{ \mathcal O(1) } }{ R^\frac12 }. \]
Together with the bound for~\( \Xi_j^{ (1) } \), this eventually proves Theorem~\ref{th:main}.

\section{Proof of Theorems~\ref{th:main}, \ref{cor:piltz}, \ref{cor:norms of ideals} and~\ref{cor:titchmarsh}} \label{section:proof of main theorem}

In this section we want to deduce Theorem~\ref{th:main} from Proposition~\ref{prop:bound-largechars}, and afterwards apply this result to the problems mentioned in the introduction.
Before doing so, we first need to prove an auxiliary result, which is concerned with bounds on average for functions in~\( \mathcal F_D(A) \) twisted by a Dirichlet character.

\begin{lemma} \label{lemma: bound for the average of psi(n) f(n)}
  Let \( f \in \mathcal F_D(A) \) and let~\( B \geq 1 \).
  Then there exists a constant~\( c > 0 \), such that, for all Dirichlet characters~\( \chi \) mod~\(q\) satisfying~\( \cond(\chi) \nmid D \) and~\( q \leq (\log x)^B \), we have
  \begin{equation} \label{eq: bound for chi f}
    \sum_{n \leq x} \chi(n) f(n) \ll x e^{ -c \sqrt{ \log x } }.
  \end{equation}
  Both the constant \(c\) and the implicit constant depend at most on~\(A\), \(B\) and~\(D\).
\end{lemma}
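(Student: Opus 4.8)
The plan is to deduce the bound from the factorization of the Dirichlet series of~$\chi f$ provided by Lemma~\ref{lemma: Dirichlet series of f}, combined with the classical zero-free region, Siegel's theorem, and a routine truncated-Perron and contour-shift argument. No reduction to a subfamily of $\mathcal F_D(A)$ is needed, as Lemma~\ref{lemma: Dirichlet series of f} already applies to a general $f\in\mathcal F_D(A)$.

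First I would set, for $\Re(s)>1$,
\[
  F(s) := \sum_{n\ge 1}\frac{\chi(n)f(n)}{n^s} \;=\; H_\chi(s)\prod_{\chi'\bmod D} L(s,\chi\chi')^{b_{\chi'}},
\]
which is legitimate by Lemma~\ref{lemma: Dirichlet series of f}: here $|b_{\chi'}|\le A$, and $H_\chi$ is holomorphic and $\ll_{A,D}1$ in any fixed half-plane $\Re(s)>\sigma_0>\tfrac12$, \emph{uniformly in $\chi$}. The structural point is the hypothesis $\cond(\chi)\nmid D$: if some $\chi\chi'$ with $\chi'\bmod D$ were principal, then $\chi$ would agree with $\overline{\chi'}$ on all integers coprime to $\mathrm{lcm}(q,D)$, hence would be induced by a Dirichlet character modulo $D$, forcing $\cond(\chi)\mid D$. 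Therefore every $\chi\chi'$ occurring in the product is non-principal, each $L(s,\chi\chi')$ is entire, and in particular $F$ has \emph{no pole at $s=1$}.

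Next I would invoke zero-free regions. Since $\cond(\chi\chi')\mid qD$ and $q\le(\log x)^{B}$, each conductor is at most $D(\log x)^{B}$; the classical zero-free region for Dirichlet $L$-functions, together with Siegel's theorem to exclude a possible exceptional real zero, yields a constant $c_1=c_1(A,B,D)>0$ — \emph{ineffective}, because of Siegel — such that every $L(s,\chi\chi')$ is non-vanishing in the simply connected region
\[
  \mathcal R:=\Bigl\{\sigma+it:\ \sigma>1-\tfrac{c_1}{\log\!\bigl(qD(|t|+3)\bigr)}\Bigr\}.
\]
On $\mathcal R$ each $\log L(s,\chi\chi')$ has a single-valued holomorphic branch, agreeing with the Euler-product branch for $\Re(s)>1$, so the powers $L(s,\chi\chi')^{b_{\chi'}}$, and hence $F$, continue holomorphically to $\mathcal R$, with the growth bound $F(\sigma+it)\ll_{A,D}\bigl(\log(qD(|t|+3))\bigr)^{O(A)}$ there, using $|\log L|\ll\log\log(\cdot)$ inside the zero-free region and $H_\chi\ll_{A,D}1$.

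Finally I would run a standard truncated-Perron and contour-shift argument, exactly as in the proof of the prime number theorem for arithmetic progressions (cf.~\cite[Chapter~5]{IK04} or~\cite[Chapter~II.5]{Ten15}). With $\kappa:=1+\tfrac1{\log x}$ and $T:=\exp(\sqrt{\log x})$, truncated Perron gives
\[
  \sum_{n\le x}\chi(n)f(n)=\frac1{2\pi i}\int_{\kappa-iT}^{\kappa+iT}F(s)\,\frac{x^{s}}{s}\,ds+O\!\left(\frac{x(\log x)^{O(A)}}{T}\right),
\]
where the near-diagonal Perron terms are controlled by a short-interval bound for $\tau_A$ such as~\cite{Shiu} (recall $|f|\le\tau_A$). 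One then deforms the segment to the curve $\sigma=1-c_1/\log(qD(|t|+3))$, $|t|\le T$, picking up no residue (no pole at $s=1$, and $s=0\notin\mathcal R$). On that curve $qD(|t|+3)\le\exp(2\sqrt{\log x})$ for $x$ large, hence $x^{\sigma}\le x\exp(-\tfrac{c_1}{2}\sqrt{\log x})$, so the curve contributes $\ll x(\log x)^{O(A)}\exp(-\tfrac{c_1}{2}\sqrt{\log x})$, while the horizontal segments at height $\pm T$ contribute $\ll x(\log x)^{O(A)}/T$; with $T=\exp(\sqrt{\log x})$ all of these are $\ll x\exp(-c\sqrt{\log x})$ for a suitable $c=c(A,B,D)>0$, which is~\eqref{eq: bound for chi f}. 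The only genuinely non-routine ingredient, and the main obstacle, is the need to work with the complex powers $L(s,\chi\chi')^{b_{\chi'}}$, which forces us into a zero-free region and hence into an appeal to Siegel's theorem to kill the exceptional zero of a possible real character among the $\chi\chi'$; this is also precisely what renders the constant $c$ (and, downstream, several of the paper's main results) ineffective. The uniformity in $q$ costs nothing, since $q\le(\log x)^{B}$ is polylogarithmic and the bound $H_\chi\ll_{A,D}1$ in Lemma~\ref{lemma: Dirichlet series of f} is uniform in $\chi$.
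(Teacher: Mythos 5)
Your overall route coincides with the paper's proof: factor the Dirichlet series of $\chi f$ via Lemma~\ref{lemma: Dirichlet series of f}, observe that $\cond(\chi)\nmid D$ forces every $\chi\psi$ (with $\psi\bmod D$) to be non-principal so that no pole occurs at $s=1$, and conclude by a zero-free region together with a standard truncated-Perron contour shift (the paper delegates this last step to \cite[Section~11.3]{MV}).

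There is, however, one step that is false as written: Siegel's theorem does \emph{not} furnish a constant $c_1=c_1(A,B,D)$ such that all $L(s,\chi\psi)$ are zero-free in $\Re(s)>1-c_1/\log\bigl(qD(|t|+3)\bigr)$. Near the real axis this would amount to excluding exceptional zeros with a uniform logarithmic width, which is an open problem: for a real character of conductor $q^{*}\leq qD$, Siegel only gives $\beta<1-c(\delta)(q^{*})^{-\delta}$, and $(q^{*})^{-\delta}$ is eventually much smaller than $1/\log(3q^{*})$; since $q\leq(\log x)^{B}$ is unbounded as $x\to\infty$ while your $c_1$ may not depend on $x$, the claimed region is strictly stronger than what is known. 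The correct statement is the one in the paper's~\eqref{eq: zero-free region}: zero-freeness in $\Re(s)>1-\gamma(\Im(s))$ with $\gamma(t)=\min\bigl\{c(\delta)/\log(qD(|t|+2)),\,c(\delta)/(qD)^{\delta}\bigr\}$. This weaker region still gives your conclusion: choosing $\delta=1/(2B)$ one has $(qD)^{\delta}\ll_{D}(\log x)^{1/2}$, hence $\gamma(t)\log x\gg\sqrt{\log x}$ uniformly for $|t|\leq T=\exp(\sqrt{\log x})$, and shifting the contour to $\sigma=1-\tfrac12\gamma(t)$ (staying slightly inside the region so that the standard bound $\log L(s,\chi\psi)\ll\log\log\bigl(qD(|t|+3)\bigr)$ is available) yields $\ll x\exp(-c\sqrt{\log x})$ exactly as you intend. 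With this repair, your argument is precisely the paper's proof.
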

\begin{proof}
  Let \( F_\chi(s) \) be the Dirichlet series associated to the function~\( \chi(n) f(n) \).
  By Lemma~\ref{lemma: Dirichlet series of f} we know that~\( F_\chi(s) \) can be written as
  \[ F_\chi(s) = H_\chi(s) \prod_{ \psi \bmod D } L(s, \chi \psi)^{b_\psi} \quad \text{for} \quad \Re(s) > 1, \]
  where~\( H_\chi(s) \) is a holomorphic function in~\( \Re(s) \geq \frac12 + \varepsilon \), bounded in terms of~$A, D$ only.

  Due to the assumption~\( \cond(\chi) \nmid D \) we know that none of the characters~\( \chi \psi \) is principal, which means that none of the \(L\)-functions~\( L(s, \chi \psi) \) has a pole at~\( s = 1 \).
  It follows from Siegel's theorem that for any~\( \delta > 0 \) there exists a constant~\( c(\delta) \) such that all~\( L(s, \chi \psi) \) are zero-free in the region defined by the condition~\( \Re(s) > 1 - \gamma( \Im(s) ) \), where
  \begin{equation} \label{eq: zero-free region}
    \gamma(t) := \min\left\{ \frac{ c(\delta) }{ \log( qD ( | t | + 2 ) ) }, \frac{ c(\delta) }{ (qD)^\delta } \right\}.
  \end{equation}
  Using this zero-free region, the bound~\eqref{eq: bound for chi f} follows using a standard contour integration argument; see \textit{e.g.}~\cite[Section~11.3]{MV}.
\end{proof}

We now proceed to prove Theorem~\ref{th:main}.
We set~\( R = (\log x)^L \) where \( L \geq 1 \) is some constant which depends only on~\(A\), \(B\) and~\(D\), and which we will determine at the very end.
Note that in any case we can assume \(x\) to be large enough so that~\( D \leq R \) is satisfied.

We start by splitting the sum~\( D_f(x; a, h) \) into two parts as follows,
\[ D_f(x; a, h) = D_f( \sqrt{x}; a, h) + \sum_{ \sqrt{x} < n \leq x } f(n) \d(an - h). \]
While the first sum can be estimated by trivial means, we can use Proposition~\ref{prop:bound-largechars} to evaluate the second (after first dividing the range of summation into dyadic intervals).
This eventually shows that there exists an absolute constant~\( \delta > 0 \), and a constant~\(B\) depending only on~\(A\) and~\(D\), such that, for all~\( 1 \leq a, |h| \leq x^\delta \),
\[ D_f(x; a, h) = M_f(x; a, h) + \O\left( \d( (a, h) ) \frac{ x (\log x)^B }{ R^{1/3} } \right), \]
with
\[ \tilde M_f(x; a, h) := \sum_{ |h|/a < n \leq x } f(n) \dt_h(an; R). \]
It remains to evaluate this last sum.

After expanding~\( \dt_h(an; R) \), it can be written as
\[ \tilde M_f(x; a, h) = 2 \sum_{ \substack{ q \leq \sqrt{ ax } } } \frac1{ \varphi\left( \frac q{ (h, q) } \right) } \sum_{ \substack{ \chi \bmod \frac q{ (h, q) } \\ \cond \chi \leq R } } \overline\chi\left( \tfrac h{ (h, q) } \right) \sum_{ \substack{ \frac{q^2}a \leq n \leq x \\ (an, q) = (h, q) } } f(n) \chi\left( \tfrac{an}{ (h, q) } \right) + \O\left( x^{\delta + \varepsilon} \right). \]
We now split the remaining sum into two parts, denoted by \( \tilde M_f^{ (1) }(x; a, h) \)~and~\( \tilde M_f^{ (2) }(x; a, h) \), depending on whether~\( \cond(\chi) \mid D \) or not.
A simple reordering of the sums shows that the first part is equal to~\( M_f(x; a, h) \) as given in Theorem~\ref{th:main}.
The second part can be written as
\[ M_f^{ (2) }(x; a, h) = 2 \sum_{ t \mid (a, h) } \sum_{ \substack{ u \mid \frac h{t} \\ (u, a/t) = 1 } } \sum_{ q \leq \frac{ \sqrt{ ax } }{ t u } } \frac1{ \varphi(q) } \sum_{ \substack{ \chi \bmod q \\ \cond \chi \leq R \\ \cond(\chi) \nmid D } } \overline\chi\left( \tfrac h{ t u } \right) \chi\left( \tfrac a{ t } \right) \left( S_{f, \chi}(x, u) - S_{f, \chi}\left( \tfrac{ t u^2 q^2 }a, u \right) \right), \]
with~\( S_{f, \chi}(x, u) \) given by
\[ S_{f, \chi}(x, u) := \sum_{ n \leq \frac xu } f(u n) \chi(n). \]
This last sum can be estimated via Lemma~\ref{lemma: bound for the average of psi(n) f(n)}, namely we have
\begin{align*}
  S_{f, \chi}(x, u) &= \sum_{ \substack{ u^\ast \mid u^\infty \\ u^\ast \leq \sqrt{x} } } f(u u^\ast) \chi(u^\ast) \sum_{ \substack{ n \leq \frac x{u u^\ast} \\ (n, u) = 1 } } f(n) \chi(n) + \O\left( x^{\frac12 + \varepsilon} \right) \\
    &\ll x e^{ -c \sqrt{\log x} } \sum_{ u^\ast \leq \sqrt{x} } \frac{ \d_A(u u^\ast) }{u u^\ast} + x^{\frac12 + \varepsilon} \\
    &\ll \frac{ \d_A(u) }u x (\log x)^A e^{ -c \sqrt{\log x} },
\end{align*}
for some constant~\( c > 0 \) depending on~\(A\), \(D\) and~\(L\).
Hence
\[ M_f^{ (2) }(x; a, h) \ll \d( (a, h) ) R x (\log x)^{A + 2} e^{ -c \sqrt{\log x} } \ll \d( (a, h) ) x e^{ -\frac c2 \sqrt{\log x} }. \]

Eventually, we get
\[ D_f(x; a, h) = M_f(x; a, h) + \O\left( \d( (a, h) ) x \left( \frac{ (\log x)^B }{ R^{1/3} } + e^{ -\frac c2 \sqrt{\log x} } \right) \right), \]
and Theorem~\ref{th:main} follows with the choice~\( L = 3N + 3B\).

\subsection{Proof of Theorems~\ref{cor:piltz}, \ref{cor:norms of ideals} and~\ref{cor:titchmarsh}}

The applications mentioned in the introduction are essentially all immediate corollaries of~Theorem~\ref{th:main}, except for the fact that it remains to evaluate the main terms. This is a rather tedious task, but can be done using standard techniques from analytic number theory, in particular the Selberg-Delange method, which is for example described in detail in~\cite[Chapter~II.5]{Ten15}.
In order to not further lengthen this article, we only want to indicate very briefly the main steps of the procedure.

In the case of Theorem~\ref{cor:piltz}, the main term takes the form
\[ M_{\d_z}(x; 1, h) = 2 \sum_{ q \leq \sqrt{ x } } \frac1{ \varphi\left( \frac q{ (h, q) } \right) } \sum_{ \substack{ q^2 \leq n \leq x \\ (n, q) = (h, q) } } \d_z(n), \]
which after a few simple transformations can be written as
\begin{equation} \label{eq: main term for d_alpha}
  M_{\d_z}(x; 1, h) = \sum_{ \substack{ u \mid h, \,\, v \mid u^\infty \\ v \leq \sqrt{x} } } \d_z(uv) \sum_{ \substack{ q \leq \sqrt{x} / u \\ \left( q, \frac{vh}u \right) = 1 } } \frac{ D(x; uq, uv) - D(u^2 q^2; uq, uv) }{ \varphi(q) } + \O\left( x^{\frac12 + \varepsilon} \right),
\end{equation}
where
\[ D(y; r, t) := \sum_{ \substack{ n \leq \frac yt \\ (n, r) = 1 } } \d_z(n). \]
This sum has been studied in detail in~\cite[Chapter~II.5]{Ten15}.
In particular, following the proof of~\cite[Theorem~II.5.2]{Ten15}, we see that there exist complex numbers~\( \mu_\ell^z(r, t) \) such that
\[ D(y; r, t) = \frac1{2\pi i} \sum_{\ell = 0}^L \frac{ \mu_\ell^z(r, t) }{ \Gamma(z - \ell) } \frac{ y (\log y)^z }{ (\log y)^{\ell + 1} } + \O\left( \frac{ (\log t)^{L + 1} }t \frac{ y (\log y)^z }{ (\log y)^{L + 2 - \varepsilon} } \right), \]
where
\[ \mu_\ell^z(r, t) := \Delta_s^\ell \left( \frac{ \psi_s^z(r) }{t^s} \frac{ (s - 1)^z \zeta(s)^z }s \right) \quad \text{with} \quad \psi_s^z(r) := \prod_{p \mid r} \left( 1 - \frac1{p^s} \right)^z, \]
and where the differential operator~\( \Delta_s^\ell \) is defined as
\[ \Delta_s^\ell := \frac1{ \ell! } \frac{ \partial^\ell }{ \partial s^\ell } \bigg|_{s = 1}. \]

It therefore remains to evaluate the sums
\[ \sum_{ \substack{ q \leq \frac{ \sqrt{x} }u \\ \left( q, \frac{vh}u \right) = 1 } } \frac{ \Delta_s^\ell \psi_s^z(uq) }{ \varphi(q) } \quad \text{and} \quad \sum_{ \substack{ q \leq \frac{ \sqrt{x} }u \\ (q, vh/u) = 1 } } \frac{ \Delta_s^\ell \psi_s^z(uq) }{ \varphi(q) } \frac{ q^2 ( 2 \log(uq) )^z }{ ( 2 \log(uq) )^{\ell - 1} }. \]
For the first sum this is a standard exercise in using counter integration, the result being
\[ \sum_{ \substack{ q \leq \frac{ \sqrt{x} }u \\ \left( q, \frac{vh}u \right) = 1 } } \frac{ \Delta_s^\ell \psi_s^z(uq) }{ \varphi(q) } = \Delta_s^\ell \underset{w = 0}{ \operatorname{Res} } \left( C_{s, w}^z \frac{ \psi_s^z(u) \rho_w\left( \frac{vh}u \right) }{ \gamma_{s, w}^z(h) } \frac{ x^\frac w2 }{ u^w } \frac{ \zeta(w + 1) }w \right) + \O\left( \frac1{ x^{\frac23 - \varepsilon} } \right), \]
with
\[ C_{s, w}^z := \prod_p \left( 1 + \frac1{ (p - 1) p^{w + 1} } + \frac{ \psi_s^z(p) - 1 }{ (p - 1) p^w } \right), \]
and
\[ \gamma_{s, w}^z(n) := \prod_{ p \mid n } \left( 1 + \frac{ p ( \psi_s^z(p) - 1 ) }{ p^{w + 2} - p^{w + 1} + 1 } \right) \quad \text{and} \quad \rho_w(n) := \prod_{p \mid n} \left( 1 - \frac p{ p^{w + 2} - p^{w + 1} + 1 } \right). \]
An asymptotic formula for the second sum now follows via partial summation.
After putting the resulting formulae back in~\eqref{eq: main term for d_alpha} and completing the sum over~\(v\), this eventually leads to the main term described in Theorem~\ref{cor:piltz}.
In particular, the first coefficient is given by
\begin{equation}
  \label{eq:first-coeff-dz}
  \begin{aligned}
    \lambda_{h, 0}(z) &= \frac1{ \Gamma(z) } \prod_{ (p, h) = 1 } \Bigg( 1 + \frac{ \big( 1 - \frac1p \big)^{z - 1} - 1 }p \Bigg) \\
    &\phantom{ = {} } \cdot \prod_{ p^\ell \parallel h } \Bigg( 1 - \frac1p + \left( 1 - \frac1p \right)^{z + 1} \sum_{j = 1}^{\ell - 1} \frac{ (\ell - j) \d_z(p^j) }{p^j} + \left( 1 - \frac1p \right)^{z - 1} \frac{ \d_z(p^\ell) }{ p^{\ell + 1} } \Bigg).
  \end{aligned}
\end{equation}

For Theorem~\ref{cor:sums of two squares}, we have from~\cite[Proposition~8.4, Theorem~8.6]{Narkiewicz} that the characteristic function~$n\mapsto b_K(n)$ of the set~$\cN_K$ is multiplicative with~$b(p) = 1$ if and only if~$\sum_{\chi\in X(K)} \chi(p) > 0$, where~$X(K)$ is a subgroup of the Dirichlet characters modulo the discriminant~$D = \text{Disc}(K)$ and~$p\nmid D$. The subgroup of residue classes~$a\bmod{D}$ such that~$\sum_{\chi\in X(K)} \chi(a) > 0$, corresponding to the subgroup~$H$ in~\cite[Theorem~8.2]{Narkiewicz}, has density~$1/[K:\Q]$ inside~$(\Z/D\Z)^\times$. Thus we have a factorization
$$ \sum_{n\geq 1} \frac{b_K(n)}{n^s} = \zeta(s)^{1/[K:\Q]} H(s) $$
where~$H$ is holomorphic and bounded in the strip~$\Re(s)\geq \frac 23$. The rest of the argument the follows the path described above. We leave the details to the reader.

In the case~$K = \Q(i)$, the first coefficient is given by~$\beta_{h, 0} = B_0 B(h)$, where
\begin{align*}
  \numberthis\label{eq:first-coeff-bn}
  B_0 &:= \frac1{ \sqrt{2} } \prod_{ p \equiv 3 \bmod 4 } \left( 1 - \frac1{p^2} \right)^{ -\frac12 },
  \intertext{and}
  B(h) &:= \left( 1 + \frac{ \chi_4(h^\ast) }{4 h^\circ} \right) \prod_{ \substack{ p^\ell \parallel h \\ p \equiv 3 \bmod 4 } } \left( 1 - \frac1{p + 1} + \frac{ (-1)^\ell }{ p^\ell (p + 1) } \right) \prod_{p \equiv 3 \bmod 4} \left( 1 + \frac1{p^2} \right),
\end{align*}
with \( h^\circ := (h, 2^\infty) \), \( h^\ast := \frac h{h^\circ} \) and \(\chi_4\) the non-principal character mod~\(4\).

Finally, the proof of Theorem~\ref{cor:titchmarsh} rests upon the fact that
\[ \sum_{ \substack{ |h| < n \leq x \\ \omega(n) = k } } \d(n - h) = \frac1{k!} \frac{\partial^k }{\partial z^k} \Xi_{x, h}(0) \quad \text{with} \quad \Xi_{x, h}(z) := \sum_{ |h| < n \leq x } z^{ \omega(n) } \d(n - h). \]
Since the function~\( n \mapsto z^{ \omega(n) } \) is an element of~\( \mathcal F_1(A) \) for~\( |z| \leq A \), Theorem~\ref{th:main} can again be applied in this case.
After evaluating the arising main term in the same manner as described above, we see that there exist functions~\( \gamma_{h, \ell}(z) \), which are holomorphic in a neighborhood of~\(z\), such that
\[ \Xi_{x, h}(z) = x (\log x)^z \sum_{\ell = 0}^L \frac{ \gamma_{h, \ell}(z) }{ (\log x)^\ell } + \O\left( \frac{ x (\log x)^{ \Re(z) } }{ (\log x)^{L + 1 - \varepsilon} } \right). \]
Now an application of~\cite[Theorem~II.6.3]{Ten15} proves Theorem~\ref{cor:titchmarsh}.

\bibliographystyle{alpha}
\bibliography{divtm-alpha}

\end{document}